\newcommand{\red}[1]{\textcolor{red}{#1}}
\newcommand{\blue}[1]{\textcolor{blue}{#1}}
\newtheorem{thm}{Theorem}
\newtheorem{lemma}[thm]{Lemma}
\newtheorem{proposition}[thm]{Proposition}
\newtheorem{definition}{Definition}
\newtheorem{corollary}[thm]{Corollary}
\newtheorem{remark}{Remark}
\def\Cov{\textsf{Cov}} % the symbol Cov for covariance used the sans serif letter
\def\Var{\mathsf{Var}} % the symbol Var for covariance used the sans serif letter
\def\text#1{\mbox{\rm #1}}
\newcommand{\lnorm}[2]{\|{#1} \|_{{#2}}}
\newcommand{\indc}[1]{{\mathbf{1}_{\left\{{#1}\right\}}}}
\newcommand{\norm}[1]{\|{#1} \|}
\newcommand{\wh}{\widehat}
\newcommand{\wt}{\widetilde}
\newcommand{\Fnorm}[1]{\lnorm{#1}{\rm F}}
\newcommand{\fnorm}[1]{\|#1\|_{\rm F}}
\newcommand{\opnorm}[1]{\|#1\|_{\rm op}}
\newcommand{\Prob}{\mathbb{P}}
\newcommand{\E}{\mathbb{E}}
\newcommand{\Tr}{\mathop{\sf Tr}}
\newcommand{\supp}{{\rm supp}}
\newcommand{\iprod}[2]{\left \langle #1, #2 \right\rangle}
\newcommand{\floor}[1]{{\left\lfloor {#1} \right \rfloor}}
\newcommand{\ceil}[1]{{\left\lceil {#1} \right \rceil}}
\newcommand{\TV}{{\sf TV}}
\newcommand{\brac}[1]{\left( #1\right)}
\title{Minimax rates in sparse, high-dimensional changepoint detection}
\date{}
\author{Haoyang Liu$^\ast$, Chao Gao$^\ast$ and Richard J. Samworth$^\dag$ \\ $^\ast$University of Chicago and $^\dag$University of Cambridge}
\begin{document}
\maketitle

\begin{abstract}
  We study the detection of a sparse change in a high-dimensional mean vector as a minimax testing problem.  Our first main contribution is to derive the exact minimax testing rate across all parameter regimes for $n$ independent, $p$-variate Gaussian observations.  This rate exhibits a phase transition when the sparsity level is of order $\sqrt{p \log \log (8n)}$ and has a very delicate dependence on the sample size: in a certain sparsity regime it involves a triple iterated logarithmic factor in~$n$.  Further, in a dense asymptotic regime, we identify the sharp leading constant, while in the corresponding sparse asymptotic regime, this constant is determined to within a factor of $\sqrt{2}$.  Extensions that cover spatial and temporal dependence, primarily in the dense case, are also provided. 
%\textbf{Keywords.} ??
\end{abstract}
% \begin{keyword}[class=AMS]
% \kwd[Primary ]{62H12}
% \kwd[; secondary ]{62C20}
% \end{keyword}
% \begin{keyword}
% \kwd{Convex programming, group Lasso, Minimax rates, Rates of convergence, Sparse CCA (SCCA)}
% \end{keyword}

% \end{frontmatter}

\section{Introduction}

The problem of changepoint detection has a long history \citep[e.g.][]{Page1955}, but has undergone a remarkable renaissance over the last 5--10 years.  This has been driven in part because these days sensors and other devices collect and store data on unprecedented scales, often at high frequency, which has placed a greater emphasis on the running time of changepoint detection algorithms \citep*{KFE2012,frick2014multiscale}.  But it is also because nowadays these data streams are often monitored simultaneously as a multidimensional process, with a changepoint in a subset of the coordinates representing an event of interest.  Examples include distributed denial of service attacks as detected by changes in traffic at certain internet routers \citep{PLR2004} and changes in a subset of blood oxygen level dependent contrast in a subset of voxels in fMRI studies \citep{AstonKirch2012}.  Away from time series contexts, the problem is also of interest, for instance in the detection of chromosomal copy number abnormality \citep{ZSJL2010,wang2018high}.  Key to the success of changepoint detection methods in such settings is the ability to borrow strength across the different coordinates, in order to be able to detect much smaller changes than would be possible through observation of any single coordinate in isolation.

We initially consider a simple model where, for some $n \geq 2$, we observe a $p \times n$ matrix $X$ that can be written as
\begin{equation}
\label{Eq:Model}
X = \theta + E,
\end{equation}
where $\theta \in \mathbb{R}^{p \times n}$ is deterministic and the entries of $E$ are independent $N(0,1)$ random variables.  We wish to test the null hypothesis that the columns of $\theta$ are constant against the alternative that there exists a time $t_0 \in \{1,\ldots,n-1\}$ at which these mean vectors change, in at most $s$ out of the $p$ coordinates.  The difficulty of this problem is governed by a signal strength parameter $\rho^2$ that measures the squared Euclidean norm of the difference between the mean vectors, rescaled by $\frac{t_0(n-t_0)}{n}$; this latter quantity can be interpreted as an effective sample size.  The goal is to identify the minimax testing rate in $\rho^2$ as a function of the problem parameters $p$, $n$ and $s$, and we denote this by $\rho^*(p,n,s)^2$; this is the signal strength at which we can find a test making the sum of the Type I and Type II error probabilities arbitrarily small by choosing $\rho^2$ to be an appropriately large multiple of $\rho^*(p,n,s)^2$ (where the multiple is not allowed to depend on $p$, $n$ and $s$), and at which any test has error probability sum arbitrarily close to $1$ for a suitably small multiple of $\rho^*(p,n,s)^2$.

Our first main contribution, in Theorem~\ref{thm:minimax}, is to reveal a particularly subtle form of the exact minimax testing rate in the above problem, namely 
\[
\rho^*(p,n,s)^2\asymp\begin{cases}
\sqrt{p\log\log(8n)} & \mbox{if $s\geq\sqrt{p\log\log(8n)}$}, \\
s\log\left(\frac{ep\log\log(8n)}{s^2}\right) \vee \log\log(8n) & \mbox{if $s<\sqrt{p\log\log(8n)}$.} \end{cases}
\]
This result provides a significant generalization of two known special cases in the literature, namely $\rho^*(1,n,1)^2$ and $\rho^*(p,2,s)^2$; see Section~\ref{SubSec:Special} for further discussion.  Although our initial optimal testing procedure depends on the sparsity level $s$, which would often be unknown in practice, we show in Theorem~\ref{thm:adaptive} that it is possible to construct an adaptive test that achieves exactly the same rate (but is a little more complicated to describe).

The theorem described above is a finite-sample result, but does not provide information at the level of constants.  By contrast, in Section~\ref{SubSec:Constants}, we study both dense and sparse asymptotic regimes, and identify the optimal constants exactly, in the former case, and to within a factor of $\sqrt{2}$ in the latter case.  In combination with Theorem~\ref{thm:minimax}, then, we are able to provide really quite a precise picture of the minimax testing rate in this problem.

Sections~\ref{sec:spatial} and~\ref{SECTEMP} concern extensions of our results to more general data generating mechanisms that allow for spatial and temporal dependence respectively.  In Section~\ref{sec:spatial}, we allow for cross-sectional dependence across the coordinates through a non-diagonal covariance matrix $\Sigma$ for the (Gaussian) columns of $E$.  We identify the sharp minimax testing rate when $s=p$, though the optimal procedure depends on three functionals of $\Sigma$, namely its trace, as well as its Frobenius and operator norms.  Estimation of these quantities is confounded by the potential presence of the changepoint, but we are able to propose a robust method that retains the same guarantee under a couple of additional conditions.  As an example, we consider covariance matrices that are a convex combination of the identity matrix and a matrix of ones; thus, each pair of distinct coordinates has the same (non-negative) covariance.  Interestingly, we find here that this covariance structure can make the problem either harder or easier, depending on the sparsity level of the changepoint.  In Section~\ref{SECTEMP}, we also focus on the case $s=p$ and allow dependence across the columns of $E$ (which are still assumed to be jointly Gaussian), controlled through a bound $B$ on the sum of the contributions of the operator norms of the off-diagonal blocks of the $np \times np$ covariance matrix.  Again, interesting phase transition phenomena in the testing rate occur here, depending on the relative magnitudes of the parameters $B$, $p$ and $n$. 

Most prior work on multivariate changepoint detection has proceeded without a sparsity condition and in an asymptotic regime with $n$ growing to infinity with the dimension fixed, including \citet{basseville1993detection}, \citet{CsorgoHorvath1997}, \citet{ombao2005slex}, \citet{aue2009break}, \citet{kirch2015detection}, \citet{ZSJL2010} and \citet{horvath2012change}.  \citet{bai2010common} studied the least squares estimator of a change in mean for high-dimensional panel data.  \citet{jirak2015uniform}, \citet{cho2015multiple}, \citet{cho2016change} and \citet{wang2018high} have all proposed CUSUM-based methods for the estimation of the location of a sparse, high-dimensional changepoint.   \citet{aston2018high} introduce a notion of efficiency that quantifies the detection power of different statistics in high-dimensional settings.  \citet{enikeeva2013high} study the sparse changepoint detection problem in an asymptotic regime in which $p \rightarrow \infty$, and at the same time $s \rightarrow \infty$ with $s/p \rightarrow \infty$ and the sample size not too large; we compare their results with ours in Section~\ref{Sec:Adaptation}.  Further related work on high-dimensional changepoint problems include the detection of changes in covariance \citep[e.g.][]{aue2009break,cribben2017estimating,wang2017optimal} and in sparse dynamic networks \citep{wang2018optimal}.  We emphasize that in this work we focus entirely on the offline version of the changepoint testing problem, where the entire data stream is observed prior to the statistician attempting to determine whether or not a change in mean has occurred.  For recent work on the corresponding online problem, where the data are observed sequentially and one wishes to declare a change as soon as possible after it has occured, see, e.g., \citet{xie2013sequential} and \citet{chen2020high}.

%The rest of the paper is organized as follows. We derive the minimax rate of the detection boundary in Section \ref{sec:main}. Some extensions of spatial dependence and temporal dependence are considered in Section \ref{sec:spatial} and Section \ref{sec:temp}, respectively.

Proofs of our results in Sections~\ref{sec:main} and~\ref{sec:spatial} are given in Section~\ref{sec:proofs}, while proofs of the results in Section~\ref{SECTEMP} and various auxiliary lemmas are provided in the appendix. We close this section by introducing some notation that will be used throughout the paper. For $d \in \mathbb{N}$, we write $[d] := \{1,\ldots,d\}$.  Given $a,b\in\mathbb{R}$, we write $a\vee b:=\max(a,b)$ and $a\wedge b:=\min(a,b)$.  We also write $a\lesssim b$ to mean that there exists a universal constant $C > 0$ such that $a\leq Cb$; moreover, $a \asymp b$ means $a\lesssim b$ and $b\lesssim a$.  For a set $S$, we use $\mathbbm{1}_{S}$ and $|S|$ to denote its indicator function and cardinality respectively. For a vector $v = (v_1,\ldots,v_d)^T \in\mathbb{R}^d$, we define the norms $\norm{v}_1 := \sum_{\ell=1}^d |v_\ell|$, $\norm{v}^2:=\sum_{\ell=1}^d v_\ell^2$ and $\norm{v}_{\infty}:=\max_{\ell \in [d]}|v_\ell|$, and also define $\|v\|_0 := \sum_{\ell=1}^d \mathbbm{1}_{\{v_\ell \neq 0\}}$.  Given two vectors $u,v \in \mathbb{R}^d$ and a positive definite matrix $\Sigma \in \mathbb{R}^{d \times d}$, we define $\langle u,v \rangle_{\Sigma}^{-1} := u^T\Sigma^{-1}v$ and $\|v\|_{\Sigma^{-1}} := (v^T\Sigma^{-1}v)^{1/2}$ and omit the subscripts when $\Sigma = I_d$.  More generally, the trace inner product of two matrices $A,B\in\mathbb{R}^{d_1\times d_2}$ is defined as $\iprod{A}{B} :=\sum_{\ell=1}^{d_1}\sum_{\ell'=1}^{d_2}A_{\ell \ell'}B_{\ell \ell'}$, while the Frobenius and operator norms of $A$ are given by $\fnorm{A}:=\sqrt{\iprod{A}{A}}$ and $\opnorm{A}:=s_{\max}(A)$ respectively, where $s_{\max}(\cdot)$ denotes the largest singular value.  The total variation distance between two probability measures $P$ and $Q$ on a measurable space $(\mathcal{X},\mathcal{A})$ is defined as $\TV(P,Q):=\sup_{A \in \mathcal{A}}|P(A)-Q(A)|$.  Moreover, if $P$ is absolutely continuous with respect to $Q$, then the Kullback--Leibler divergence is defined as $D(P\|Q):=\int_{\mathcal{X}} \log\frac{dP}{dQ} \, dP$, and the chi-squared divergence is defined as $\chi^2(P\|Q):=\int_{\mathcal{X}} \bigl(\frac{dP}{dQ}-1\bigr)^2 \, dQ$.  The notation $\mathbb{P}$ and $\mathbb{E}$ are generic probability and expectation operators whose distribution is determined from the context.

\section{Main results}\label{sec:main}

Recall that we consider the observation of a $p \times n$ matrix $X=\theta+E$, where $n \geq 2$, where $\theta$ is deterministic and where each entry of the error matrix $E$ is an independent $N(0,1)$ random variable. In other words, writing $X_t$ and $\theta_t$ for the $t$th columns of $X$ and $\theta$ respectively, we have $X_t\sim N_p(\theta_t,I_p)$. The goal of our paper is to test whether or not the sequence $\{\theta_t\}_{t\in[n]}$ has a changepoint.  We define the parameter space of signals without a changepoint by
$$\Theta_0(p,n):=\left\{\theta\in\mathbb{R}^{p\times n}: \theta_t=\mu\text{ for some }\mu\in\mathbb{R}^p\text{ and all }t\in[n]\right\}.$$
For $s \in [p]$ and $\rho > 0$, the space consisting of signals with a sparse structural change at time $t_0 \in [n-1]$ is defined by
\begin{align*}
\Theta^{(t_0)}(p,n,s,\rho) &:= \Big\{\theta = (\theta_1,\ldots,\theta_n) \in\mathbb{R}^{p\times n}:\\ & \quad\quad\theta_t=\mu_1\text{ for some }\mu_1\in\mathbb{R}^p\text{ for all } 1\leq t\leq t_0, \\
& \quad\quad\theta_t=\mu_2\text{ for some }\mu_2\in\mathbb{R}^p\text{ for all }t_0+1\leq t\leq n, \\
& \quad\quad \|\mu_1-\mu_2\|_0\leq s, \frac{t_0(n-t_0)}{n}\|\mu_1-\mu_2\|^2\geq \rho^2\Big\}.
\end{align*}
In the definition of $\Theta^{(t_0)}(p,n,s,\rho)$, the parameters $p$ and $n$ determine the size of the problem, while $t_0$ is the location of the changepoint. The quantities $s$ and $\rho$ parametrize the sparsity level and the magnitude of the structural change respectively. It is worth noting that $\|\mu_1-\mu_2\|^2$ is normalized by the factor $\frac{t_0(n-t_0)}{n}$, which plays the role of the effective sample size of the problem. To understand this, consider the problem of testing the changepoint at location~$t_0$ when $p=1$. Then the natural test statistic is
$$\frac{1}{t_0}\sum_{t=1}^{t_0} X_t - \frac{1}{n-t_0}\sum_{t=t_0+1}^nX_t,$$
whose variance is $\frac{n}{t_0(n-t_0)}$.  Hence the difficulty of changepoint detection problem depends on the location of the changepoint.  Through the normalization factor $\frac{t_0(n-t_0)}{n}$, we can define a common signal strength parameter $\rho$ across different possible changepoint locations.  Taking a union over all such changepoint locations, the alternative hypothesis parameter space is given by
$$\Theta(p,n,s,\rho):=\bigcup_{t_0=1}^{n-1}\Theta^{(t_0)}(p,n,s,\rho).$$
We will address the problem of testing the two hypotheses
\begin{equation}
H_0:\theta\in\Theta_0(p,n),\qquad\qquad H_1:\theta\in\Theta(p,n,s,\rho). \label{eq:def-testing-problem}
\end{equation}
To this end, we let $\Psi$ denote the class of possible test statistics, i.e.~measurable functions $\psi:\mathbb{R}^{p \times n} \rightarrow [0,1]$.  We also define the minimax testing error by
$$\mathcal{R}(\rho) := \inf_{\psi \in \Psi}\biggl\{\sup_{\theta\in\Theta_0(p,n)}\mathbb{E}_{\theta}\psi(X) + \sup_{\theta\in\Theta(p,n,s,\rho)}\mathbb{E}_{\theta}\bigl(1-\psi(X)\bigr)\biggr\},$$
where we use $\mathbb{P}_{\theta}$ and $\mathbb{E}_{\theta}$ to denote probabilities and expectations under the data generating process~\eqref{Eq:Model}.  Our goal is to determine the order of the minimax rate of testing in this problem, as defined below.  %of the paper is to find the smallest order of $\rho$, such that the error $\mathcal{R}(\rho)$ is arbitrarily small. This leads to the following definition of the minimax rate of the detection boundary.
\begin{definition}\label{def:minimax-seperation}
	We say $\rho^*=\rho^*(p,n,s)$ is the minimax rate of testing if the following two conditions are satisfied:
	\begin{enumerate}
		\item For any $\epsilon\in(0,1)$, there exists $C_{\epsilon}>0$, depending only on $\epsilon$, such that $\mathcal{R}(C\rho^*)\leq\epsilon$ for any $C>C_{\epsilon}$.
		\item For any $\epsilon\in(0,1)$, there exists $c_{\epsilon}>0$, depending only on $\epsilon$, such that $\mathcal{R}(c\rho^*)\geq 1-\epsilon$ for any $c \in (0,c_{\epsilon})$.
	\end{enumerate}
\end{definition}

\subsection{Special cases}
\label{SubSec:Special}

Special cases of $\rho^*(p,n,s)$ are well understood in the literature.  For instance, when $p=s=1$, we recover the one-dimensional changepoint detection problem.  \cite{arias2011detection} showed that
%\citet{gao2017minimax} recently determined that
\begin{equation}
\rho^*(1,n,1)^2 \asymp \log\log(8n).\label{eq:gao}
\end{equation}
The rate (\ref{eq:gao}) involves an iterated logarithmic factor, in constrast to a typical logarithmic factor in the minimax rate of sparse signal detection \citep[e.g.,][]{donoho2004higher,arias2005near,berthet2013optimal}.

Another solved special case is when $n=2$. In this setting, we observe $X_1\sim N_p(\mu_1,I_p)$ and $X_2\sim N_p(\mu_2,I_p)$, and the problem is to test whether or not $\mu_1=\mu_2$. Since $X_1-X_2$ is a sufficient statistic for $\mu_1-\mu_2$, the problem can be further reduced to a sparse signal detection problem in a Gaussian sequence model. For this problem, \cite{collier2017minimax} established the minimax detection boundary %has been established by \cite{collier2017minimax}, and we have
\begin{equation}
\rho^*(p,2,s)^2 \asymp \begin{cases}
\sqrt{p} & \mbox{if $s\geq \sqrt{p}$} \\
s\log\left(\frac{ep}{s^2}\right) & \mbox{if $s<\sqrt{p}$}.
\end{cases}\label{eq:collier}
\end{equation}
It is interesting to notice the elbow effect in the rate (\ref{eq:collier}).  Above the sparsity level of $\sqrt{p}$, one obtains the parametric rate that can be achieved using the test that rejects $H_0$ if $\|X_1-X_2\|_2^2 > 2p + c\sqrt{p}$ for an appropriate $c > 0$.

It is straightforward to extend both rates (\ref{eq:gao}) and (\ref{eq:collier}) to cases where either $p$ or $n$ is of a constant order. However, the general form of $\rho^*(p,n,s)$ is unknown in the statistical literature.

\subsection{Minimax detection boundary} \label{sec:m-d-b}

The main result of the paper is given by the following theorem.
\begin{thm}\label{thm:minimax}
	The minimax rate of the detection boundary of the problem (\ref{eq:def-testing-problem}) is given by
	\begin{equation}
	\rho^*(p,n,s)^2\asymp\begin{cases}
	\sqrt{p\log\log(8n)} & \mbox{if $s\geq\sqrt{p\log\log(8n)}$} \\
	s\log\left(\frac{ep\log\log(8n)}{s^2}\right) \vee \log\log(8n) & \mbox{if $s<\sqrt{p\log\log(8n)}.$}
	\end{cases}\label{eq:minimax-rate}
	\end{equation}
\end{thm}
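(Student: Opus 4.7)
The plan is to establish (\ref{eq:minimax-rate}) by matching upper and lower bounds. Observing that the rate interpolates between the one-dimensional rate (\ref{eq:gao}) and the sparse sequence rate (\ref{eq:collier}) with $p$ effectively inflated to $p\log\log(8n)$ to absorb the cost of scanning over the unknown changepoint location, I would reduce the analysis, at each candidate $t\in[n-1]$, to sparse signal detection for the CUSUM vector
\begin{equation*}
Y_t := \sqrt{\tfrac{t(n-t)}{n}}\biggl(\tfrac{1}{t}\sum_{u=1}^{t} X_u - \tfrac{1}{n-t}\sum_{u=t+1}^{n} X_u\biggr) \sim N_p(\eta_t, I_p),
\end{equation*}
where $\eta_t = 0$ for all $t$ under $H_0$, while $\|\eta_{t_0}\|^2 = \rho^2$ at the true changepoint under $H_1$.

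For the upper bound, I would use a scan test over a dyadic grid $\mathcal{T} := \{2^j, n-2^j : 0 \le j \le \lfloor \log_2(n/2) \rfloor\}$ of cardinality $O(\log n)$, chosen so that for every $t_0$ there exists some $t \in \mathcal{T}$ with $\|\eta_t\|^2 \gtrsim \rho^2$. In the dense regime $s \ge \sqrt{p\log\log(8n)}$, the test rejects when $\max_{t\in\mathcal{T}}(\|Y_t\|^2 - p) > C\sqrt{p\log\log(8n)}$, and null control rests on chi-squared tail bounds combined with a scale-by-scale union bound. The key observation is that at dyadic scale $j$ there are effectively $O(1)$ nearly independent candidates after a suitable orthogonalization of the $Y_t$'s, so the total logarithmic penalty collapses from $\log n$ to $\log\log n$. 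In the sparse regime, I would replace $\|Y_t\|^2$ by a coordinate-thresholded statistic of the form $\sum_{i=1}^{p}(Y_{t,i}^2 - 1)\mathbbm{1}\{|Y_{t,i}| > \tau\}$ with threshold $\tau \asymp \sqrt{\log\bigl(ep\log\log(8n)/s^2\bigr)}$, mirroring the \citet{collier2017minimax} construction but with the scan-inflated effective dimension $p\log\log(8n)$.

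For the lower bound, I would apply Ingster's chi-squared method with a two-level prior: first draw $t_0$ uniformly from a geometric grid of scales, then conditionally draw a support $S \subseteq [p]$ of size $s$ uniformly at random and Rademacher signs $\epsilon_i \in \{\pm 1\}$, setting $\mu_1 = 0$ and $\mu_2 = a\sum_{i\in S}\epsilon_i e_i$ with $a^2 = n\rho^2/\bigl(s\,t_0(n-t_0)\bigr)$, so that the signal-strength constraint is saturated. The chi-squared divergence $\chi^2(\mathbb{P}_\pi \| \mathbb{P}_0)$ factorizes over coordinates after integrating out the signs, and expands into a double sum over $(t_0, t_0') \in \mathcal{T}^2$. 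Diagonal contributions ($t_0 = t_0'$) reduce directly to the \citet{collier2017minimax} chi-squared calculation, which stays $o(1)$ when $\rho^2$ is a sufficiently small multiple of the claimed rate. Off-diagonal contributions require showing that the alternatives indexed by different dyadic scales are nearly orthogonal, by virtue of the bridge-like structure of the CUSUM transform, so that their aggregate contribution remains bounded.

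I expect the most delicate steps to be (i) on the upper-bound side, calibrating the sparse scan statistic so that a single threshold simultaneously absorbs the $\log\log(8n)$ penalty from the grid and the $\log\bigl(ep\log\log(8n)/s^2\bigr)$ penalty from the sparsity without further logarithmic loss, and handling the additive $\log\log(8n)$ term via a low-dimensional (Arias-Castro--type) auxiliary test; and (ii) on the lower-bound side, obtaining precisely a $\log\log n$ (rather than $\log n$ or $1$) enhancement over the \citet{collier2017minimax} rate by carefully bounding the off-diagonal chi-squared contributions across dyadic scales, which is where the geometric spacing of $\mathcal{T}$ and the signal normalization $\|\mu_1-\mu_2\|^2 \ge n\rho^2/\bigl(t_0(n-t_0)\bigr)$ interact most subtly.
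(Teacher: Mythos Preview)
Your proposal is essentially correct and follows the same architecture as the paper: a thresholded CUSUM scan over a dyadic time grid for the upper bound, and Ingster's chi-squared method with a prior that randomizes both the dyadic scale and the sparse support for the lower bound. A few points where your sketch diverges from or overcomplicates the paper's argument are worth flagging.

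First, the orthogonalization idea is unnecessary. Your own grid $\mathcal{T}$ already has cardinality $O(\log n)$, so a crude union bound over its elements combined with the chi-squared tail $\mathbb{P}(\chi_p^2 - p > 2\sqrt{px}+2x)\le e^{-x}$ immediately gives Type~I error at most $|\mathcal{T}|e^{-x}\lesssim (\log n)e^{-x}$, forcing $x\asymp \log\log n$. The paper does exactly this; there is no scale-by-scale decomposition or near-independence argument. The paper's $Y_t$ is also slightly different from your standard CUSUM: it uses only the first $t$ and last $t$ observations, $Y_t=(X_1+\cdots+X_t-X_{n-t+1}-\cdots-X_n)/\sqrt{2t}$, which keeps the analysis clean but is not essential.

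Second, your sparse statistic centres at $1$ rather than at $\nu_a:=\mathbb{E}(Z^2\mid |Z|\ge a)$. With centring at $1$ each summand has positive null mean, so the test would be miscalibrated; the paper uses $A_{t,a}=\sum_j (Y_t(j)^2-\nu_a)\mathbbm{1}_{\{|Y_t(j)|\ge a\}}$ precisely to make the null mean vanish, and the tail bound (their Lemma~\ref{lem:trunc-chi-square-tail}) is proved for this centring.

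Third, on the lower bound, your description of diagonal versus off-diagonal contributions is right in spirit. The paper's calculation makes explicit that $\langle\theta_1,\theta_2\rangle=\beta^2 2^{-|k-l|/2}\sum_{j\in S\cap T}u_jv_j$, so the cross-scale decay is geometric in $|k-l|$; the bound then splits into $k=l$, small $|k-l|$, and large $|k-l|$, rather than purely diagonal/off-diagonal. The additive $\log\log(8n)$ lower bound in the sparse case is obtained simply by embedding the one-dimensional problem (restricting the change to a single coordinate), not via an auxiliary construction.
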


It is important to note that the minimax rate (\ref{eq:minimax-rate}) is not a simple sum or multiplication of the rates (\ref{eq:gao}) and (\ref{eq:collier}) for constant $p$ or $n$. The high-dimensional changepoint detection problem differs fundamentally from both its low-dimensional version and the sparse signal detection problem.

We observe that the minimax rate exhibits the two regimes in~\eqref{eq:minimax-rate} only when $p\geq\log\log(8n)$, since if $p<\log\log(8n)$, then the condition $s\geq\sqrt{p\log\log(8n)}$ is empty, and~\eqref{eq:minimax-rate} has just one regime.  Compared with the rate (\ref{eq:collier}), the phase transition boundary for the sparsity $s$ becomes $\sqrt{p\log\log(8n)}$. In fact, the minimax rate (\ref{eq:minimax-rate}) can be obtained by first replacing the $p$ in (\ref{eq:collier}) with $p\log\log(8n)$, and then adding the extra term (\ref{eq:gao}).

The dependence of (\ref{eq:minimax-rate}) on $n$ is very delicate. Consider the range of sparsity where
$$\frac{\log\log(8n)}{\log(e\log\log(8n))} \vee \frac{\sqrt{p}}{(\log\log(8n))^C}\lesssim s\lesssim \sqrt{p\log\log(8n)},$$
for some universal constant $C>0$. The rate (\ref{eq:minimax-rate}) then becomes
$$\rho^*(p,n,s)^2 \asymp s\log(e\log\log(8n)).$$
That is, it grows with $n$ at a $\log\log\log(\cdot)$ rate. To the best of our knowledge, such a triple iterated logarithmic rate has not been found in any other problem before in the statistical literature.

Last but not least, we remark that when $p$ or $n$ is a constant, the rate (\ref{eq:minimax-rate}) recovers (\ref{eq:gao}) and (\ref{eq:collier}) as special cases.

\subsubsection{Upper Bound}

To derive the upper bound, we need to construct a testing procedure. We emphasize that the goal of hypothesis testing is to detect the existence of a changepoint; this is in contrast to the problem of changepoint estimation \citep{cho2015multiple,wang2018high,wang2018univariate}, where the goal is to find the changepoint's location.

If we knew that the changepoint were between $t$ and $n-t+1$, it would be natural to define the Cumulative Sum (CUSUM)-type statistic
\begin{equation}
Y_t:=\frac{(X_1+ \ldots +X_t)-(X_{n-t+1}+ \ldots +X_n)}{\sqrt{2t}}.\label{eq:Y_t}
\end{equation}
Note that the definition of $Y_t$ does not use the observations between $t+1$ and $n-t$. This allows $Y_t$ to detect any changepoint in this range, regardless of its location.  The existence of a changepoint implies that $\mathbb{E}_\theta(Y_t) \neq 0$. Since the structural change only occurs in a sparse set of coordinates, we threshold the magnitude of each coordinate $Y_t(j)$ at level $a \geq 0$ to obtain
$$A_{t,a}:=\sum_{j=1}^p \bigl\{Y_t(j)^2-\nu_a\bigr\}\mathbbm{1}_{\{|Y_t(j)|\geq a\}},$$
where $\nu_a:=\mathbb{E}\bigl(Z^2\bigm||Z| \geq a\bigr)$ is the conditional second moment of $Z \sim N(0,1)$, given that its magnitude is at least $a$. See \cite{collier2017minimax} for a similar strategy for the sparse signal detection problem. Note that $A_{t,0}=\sum_{j=1}^p \bigl\{Y_t(j)^2-1\bigr\}$ has a centered $\chi_p^2$ distribution under~$H_0$.

Since the range of the potential changepoint locations is unknown, a natural first thought is to take a maximum of $A_{t,a}$ over $t \in [n/2]$.  It turns out, however, that in high-dimensional settings it is very difficult to control the dependence between these different test statistics at the level of precision required to establish the minimax testing rate.  A methodological contribution of this work, then, is the recognition that it suffices to compute a maximum of $A_{t,a}$ over a candidate set $\mathcal{T}$ of locations, because if there exists a changepoint at time $t_0$ and $t_0/2 < \wt{t} \leq t_0$ for some $\wt{t} \in \mathcal{T}$, then $\|\mathbb{E}_{\theta}(Y_{\wt{t}})\|$ and $\|\mathbb{E}_{\theta}(Y_{t_0})\|$ are of the same order of magnitude.  This observation reflects a key difference between the changepoint testing and estimation problems.  To this end, we define
$$\mathcal{T}:=\left\{1,2,4,\ldots,2^{\floor{\log_2(n/2)}}\right\},$$
so that $|\mathcal{T}| = 1 + \floor{\log_2(n/2)}$.  Then, for a given $r \geq 0$, the testing procedure we consider is given by
\begin{equation}
\psi \equiv \psi_{a,r}(X) :=\mathbbm{1}_{\{\max_{t\in\mathcal{T}}A_{t,a}>r\}}. \label{eq:def-test}
\end{equation}
The theoretical performance of the test~\eqref{eq:def-test} is given by the following theorem. We use the notation $r^*(p,n,s)$ for the rate function on the right-hand side of (\ref{eq:minimax-rate}).
\begin{proposition}\label{thm:upper}
	For any $\epsilon\in(0,1)$, there exists $C>0$, depending only on $\epsilon$, such that the testing procedure~(\ref{eq:def-test}) with $a^2=4\log\left(\frac{ep\log\log(8n)}{s^2}\right)\mathbbm{1}_{\{s<\sqrt{p\log\log(8n)}\}}$ and $r=Cr^*(p,n,s)$ satisfies
	$$\sup_{\theta\in\Theta_0(p,n)}\mathbb{E}_{\theta}\psi + \sup_{\theta\in\Theta(p,n,s,\rho)}\mathbb{E}_{\theta}(1-\psi)\leq\epsilon,$$
	as long as $\rho^2\geq 32Cr^*(p,n,s)$.
\end{proposition}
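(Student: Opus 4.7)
The plan is to control the Type~I and Type~II contributions to the testing error separately, each by $\epsilon/2$, tailoring the argument to the chosen threshold $a$.

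\textbf{Type I.} Fix $\theta \in \Theta_0(p,n)$. For each $t \in \mathcal{T}$, $Y_t \sim N_p(0,I_p)$ (even though the $Y_t$ are dependent across $t$), so the summands $f_a(Y_t(j)) := \bigl(Y_t(j)^2 - \nu_a\bigr)\mathbbm{1}_{\{|Y_t(j)|\geq a\}}$ are i.i.d.\ and mean-zero across $j$. In the dense regime ($a = 0$), $A_{t,0}+p$ is $\chi_p^2$-distributed and the Laurent--Massart deviation bound gives $\mathbb{P}_\theta(A_{t,0} > C r^*(p,n,s)) \lesssim \exp(-c\log\log(8n))$ for $C$ large; a union bound over $|\mathcal{T}| \leq 1+\lfloor\log_2(n/2)\rfloor$ closes this case. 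In the sparse regime ($a>0$), I would apply Bernstein's inequality, which requires controlling the variance $p\mathbb{E}f_a(Z)^2$ and the sub-exponential norm of $f_a(Z)$. Standard Gaussian tail and moment calculations bound these by $p\mathbb{P}(|Z|\geq a)\,a^{O(1)}$ and $a^2$ respectively, and the choice $a^2 = 4\log(ep\log\log(8n)/s^2)$ yields $p\mathbb{P}(|Z|\geq a) \asymp s^2/\log\log(8n)$; both the sub-Gaussian and sub-exponential parts of Bernstein's bound then absorb the $\log\log(8n)$ factor coming from the union bound over $\mathcal{T}$ at the level $r = C r^*(p,n,s)$.

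\textbf{Type II.} Fix $\theta \in \Theta^{(t_0)}(p,n,s,\rho)$ and set $m := t_0 \wedge (n - t_0) \in [1,n/2]$. The dyadic set $\mathcal{T}$ contains $\wt t := 2^{\lfloor \log_2 m \rfloor}$, satisfying $m/2 < \wt t \leq m$. A direct case analysis on whether $t_0 \leq n/2$ or $t_0 > n/2$ shows that $Y_{\wt t} \sim N_p(\mu, I_p)$ with $\mu := \sqrt{\wt t/2}(\mu_1 - \mu_2)$, so $\|\mu\|_0 \leq s$, and the bound $\rho^2 \leq \frac{t_0(n-t_0)}{n}\|\mu_1-\mu_2\|^2 \leq m\|\mu_1-\mu_2\|^2$ together with $\wt t \geq m/2$ gives $\|\mu\|^2 \geq \rho^2/4$. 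This reduces Type~II control to sparse signal detection for the single Gaussian vector $Y_{\wt t}$, and the analysis of $A_{\wt t,a}$ follows \cite{collier2017minimax}: ``strong'' signal coordinates (those with $|\mu_j| \gtrsim a$) contribute $\asymp \sum_j \mu_j^2$ to $\mathbb{E}_\theta A_{\wt t,a}$ while ``weak'' coordinates contribute non-negatively, and a variance bound that inflates the null calculation by $\|\mu\|^2$ (through the shifted-Gaussian moments) allows Chebyshev to yield $A_{\wt t,a} > r$ with probability at least $1-\epsilon/2$ whenever $\rho^2 \geq 32 C r^*(p,n,s)$. Since $A_{\wt t,a} \leq \max_{t \in \mathcal{T}} A_{t,a}$, the test rejects.

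\textbf{Main obstacle.} The delicate step is calibrating the Type~I tail bound so that it survives the $|\mathcal{T}| \asymp \log n$ union bound in the sparse regime without inflating the target rate. The choice $a^2 = 4\log(ep\log\log(8n)/s^2)$ is tuned precisely so that the Gaussian tail $\mathbb{P}(|Z|\geq a)$ governing the null variance proxy is balanced against the requirement that signal coordinates still exceed $a$ by a constant factor under $H_1$; this interplay is ultimately what produces the triple-iterated-log phenomenon discussed after Theorem~\ref{thm:minimax}. Relative to this, the dyadic scale-selection argument under $H_1$, though essential, is largely mechanical once the candidate set $\mathcal{T}$ has been specified.
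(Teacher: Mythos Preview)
Your outline matches the paper's proof: Type~I via a union bound over~$\mathcal{T}$ with a chi-squared (dense) or truncated-chi-squared (sparse) tail bound, and Type~II by selecting the single dyadic scale $\wt t$ and applying Chebyshev with mean and variance estimates that are exactly the paper's Lemmas~\ref{lem:trunc-chi-square-mean} and~\ref{lem:trunc-chi-square-var}. The one place that needs tightening is your Bernstein step in the sparse regime: the off-the-shelf sub-exponential Bernstein inequality with parameter $\|f_a(Z)\|_{\psi_1}$ would use $p\|f_a(Z)\|_{\psi_1}^2 \asymp p$ as the variance proxy, and this is far too crude to survive the $|\mathcal{T}|\asymp\log n$ union bound at threshold $r = Cr^*(p,n,s)$ when $s$ is small and $p$ is large. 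What you actually need is the Gaussian part driven by the true variance $p\,\mathbb{E}f_a(Z)^2 \asymp pe^{-a^2/2}$, and for that one must establish the sharper MGF control $\mathbb{E}e^{\lambda f_a(Z)} \leq \exp\bigl(C\lambda^2 e^{-a^2/2}\bigr)$ for $|\lambda|\leq 1/4$---this is precisely the content of the paper's Lemma~\ref{lem:trunc-chi-square-tail}, so your ``Bernstein'' reference is effectively a placeholder for that lemma. (Incidentally, the sub-exponential scale is $O(1)$, inherited from $Z^2$, not $a^2$; but since the sharp variance is what matters, this slip is harmless once the right MGF bound is in place.)
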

Just as the minimax rate (\ref{eq:minimax-rate}) has two regimes, the testing procedure (\ref{eq:def-test}) also uses two different strategies. In the dense regime $s\geq\sqrt{p\log\log(8n)}$, we have $a^2=0$ and thus (\ref{eq:def-test}) becomes simply $\psi=\mathbbm{1}_{\{\max_{t\in\mathcal{T}}\|Y_t\|^2-p > r\}}$. In the sparse regime $s<\sqrt{p\log\log(8n)}$, a thresholding rule is applied at level $a$, where $a^2=4\log\left(\frac{ep\log\log(8n)}{s^2}\right)$.  We discuss adaptivity to the sparsity level $s$ in Section~\ref{Sec:Adaptation}.

\subsubsection{Lower Bound} We show that the testing procedure (\ref{eq:def-test}) is minimax optimal by stating a matching lower bound. %Recall that we use the notation $r^*(p,n,s)$ for the rate function on the right hand side of (\ref{eq:minimax-rate}).
\begin{proposition}\label{thm:lower}
	For any $\epsilon\in(0,1)$, there exists $c>0$, depending only on $\epsilon$, such that $\mathcal{R}(\rho)\geq 1- \epsilon$ whenever $\rho^2\leq cr^*(p,n,s)$.
\end{proposition}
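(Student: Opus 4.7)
My plan is to use Le Cam's two-point method in its mixture (Ingster) form. Taking the representative null $\theta = 0 \in \Theta_0(p,n)$, for any prior $\pi$ supported on $\Theta(p,n,s,\rho)$ and $\bar{\mathbb{P}}_{\pi} := \int \mathbb{P}_{\theta}\,d\pi(\theta)$, one has $\mathcal{R}(\rho) \geq 1 - \TV(\mathbb{P}_0, \bar{\mathbb{P}}_{\pi}) \geq 1 - \tfrac{1}{2}\sqrt{\chi^2(\bar{\mathbb{P}}_{\pi}\|\mathbb{P}_0)}$. Because the observation model is Gaussian with identity covariance, the chi-squared divergence has the tractable form $\chi^2(\bar{\mathbb{P}}_{\pi}\|\mathbb{P}_0) + 1 = \mathbb{E}_{\theta,\theta' \sim \pi \otimes \pi}\exp(\langle \theta, \theta'\rangle)$. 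It thus suffices, for each of the summands of $r^*(p,n,s)$, to exhibit a prior on $\Theta(p,n,s,\rho)$ under which this moment stays bounded by $1+4\epsilon^2$ whenever $\rho^2 \leq c(\epsilon)\, r^*(p,n,s)$.

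I would construct three separate priors, one for each piece of the rate. For the universal $\log\log(8n)$ contribution, take $s=1$, fix the jump direction to $e_1$ with independently randomized sign, and let the changepoint location be uniform over the dyadic grid $\mathcal{T}$; the resulting chi-squared calculation is a high-dimensional analogue of the argument in \citet{arias2011detection}. For the $s\log(ep\log\log(8n)/s^2)$ term in the sparse regime, couple the dyadic time prior with the sparse Rademacher prior of \citet{collier2017minimax}: conditionally on $t_0$, draw an $s$-sparse signed jump vector $v$ with magnitudes chosen so that $t_0(n-t_0)\|v\|^2/n = \rho^2$. For the dense $\sqrt{p\log\log(8n)}$ term, use the analogous construction with $s=p$ and dense Rademacher jumps. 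After centering each $\theta^{(t_0,v)}$ so that its columns sum to zero, one obtains the clean identity
\[\langle \theta^{(t_0,v)}, \theta^{(t_0',v')}\rangle = \tfrac{\min(t_0,t_0')(n-\max(t_0,t_0'))}{n}\langle v,v'\rangle,\]
and then the independence across coordinates of the Rademacher prior reduces the chi-squared to a double sum over $\mathcal{T}\times\mathcal{T}$ of products of hyperbolic cosines. The dyadic grid is essential: for $t_0, t_0' \leq n/2$ one has $\min(t_0,t_0')(n-\max(t_0,t_0'))/n \asymp \min(t_0,t_0')$, so the temporal overlap decays geometrically along $\mathcal{T}$, while the factor $1/|\mathcal{T}|^2$ in front of the double sum buys an effective extra dimension of $|\mathcal{T}| \asymp \log n$; this is precisely what upgrades $\log(ep/s^2)$ to $\log(ep\log\log(8n)/s^2)$ and $\sqrt{p}$ to $\sqrt{p\log\log(8n)}$.

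The main technical obstacle is the sparse construction, where the time-location multiplicity and the sparse-support multiplicity interact nonlinearly through the moment generating function of the sparse Rademacher jumps. My approach would be to stratify the double sum over $\mathcal{T}\times\mathcal{T}$ by the temporal gap $|\log_2 t_0 - \log_2 t_0'|$, apply the sparse Rademacher moment bound of \citet{collier2017minimax} on each slice after linearizing $\log\cosh$, and show that the off-diagonal slices contribute only a bounded correction, thanks to the geometric decay of the temporal overlap. Verifying this quantitatively, so that the sparse chi-squared bound with effective dimension $p\log\log(8n)$ is preserved across the mixture, is the delicate technical core of the proof.
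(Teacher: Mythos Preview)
Your high-level strategy matches the paper's exactly: Le Cam reduction to a chi-squared bound, a dyadic time mixture $k \sim \mathrm{Unif}\{0,\ldots,\lfloor \log_2(n/2)\rfloor\}$ with changepoint at $2^k$, and a spatial prior built from a random $s$-subset with signs. The paper also splits into the same three pieces (the $\log\log(8n)$ piece is delegated to the one-dimensional argument), and your identification of the geometric decay of the temporal overlap along the dyadic grid, together with the $1/|\mathcal{T}|^2$ averaging, as the mechanism that upgrades $p$ to $p\log\log(8n)$ is exactly the point.

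Two places where your sketch would need correction:

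\textbf{Dense regime.} You propose taking ``$s=p$ and dense Rademacher jumps'', but the prior must be supported on $\Theta(p,n,s,\rho)$ with $\|\mu_1-\mu_2\|_0\leq s$, and in the dense regime one only has $s \geq \sqrt{p\log\log(8n)}$, possibly $s\ll p$. The paper keeps the random $s$-subset $S$ even here; the key step is that $|S\cap T|\sim\mathrm{Hyp}(p,s,s)$ is dominated in convex order by $\mathrm{Bin}(s,s/p)$, which injects a factor $s^2/p$ that cancels against $\beta^4 \asymp (p/s^2)\log\log(8n)$ to leave the correct $\log\log(8n)$ exponent.

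\textbf{Sparse regime.} The paper does \emph{not} randomize signs there: it sets $u_j\equiv 1$ on $S$, so the inner product is $|S\cap T|\,\beta^2/2^{|l-k|/2}$ and one bounds the hypergeometric MGF directly by $(1+(s/p)e^{\beta^2/2^{|l-k|/2}})^s$. This matters because in the sparse regime $\beta^2 \asymp \log\bigl(p\log\log(8n)/s^2\bigr)$ is large, and your ``linearizing $\log\cosh$'' (i.e.\ $\cosh x \leq e^{x^2/2}$) would replace $\beta^2$ by $\beta^4$ in the exponent and blow up. If you insist on Rademacher signs you must use the cruder $\cosh x \leq e^{x}$ instead, at which point the signs contribute nothing; the paper simply omits them.

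Your centering trick is harmless but unnecessary: the paper's uncentered construction (mean $\beta 2^{-k/2}u_j$ on $[2^k]$, zero afterwards) gives the overlap $2^{-|l-k|/2}$ directly, without carrying the $(n-t_0)/n$ factors.
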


\subsection{Adaptation to sparsity}
\label{Sec:Adaptation}

The testing procedure (\ref{eq:def-test}) that achieves the minimax detection rate depends on knowledge of the sparsity $s$. In this section, we present an alternative procedure that is adaptive to $s$.  The idea is to take supremum over a grid of sparsity levels. Recall the definition of the testing procedure $\psi_{a,r}$ in (\ref{eq:def-test}), and let us makes the dependence on~$s$ explicit by writing
$$\psi^{(s)}:=\psi_{a(s),r(s)},$$
where $a^2(s):=4\log\left(\frac{ep\log\log(8n)}{s^2}\right)\mathbbm{1}_{\{s<\sqrt{p\log\log(8n)}\}}$ and $r(s):=Cr^*(p,n,s)$ as in Proposition \ref{thm:upper}. Then our adaptive test is defined by
$$\psi_{\rm adaptive}:=\max_{s\in\mathcal{S}}\psi^{(s)},$$
where
$$\mathcal{S}:=\Bigl\{1,2,4,\ldots,2^{\ceil{\log_2\bigl(\sqrt{p \log \log (8n)}\bigr)}-1}\Bigr\} \cup \{p\}.$$
The choice of this particular grid for $\mathcal{S}$ is not essential (we could also take $\mathcal{S} := [p]$), but it reduces computation.
\iffalse
We first describe two testing procedures, designed to deal with the dense and sparse regimes respectively. For the dense regime, and for $C > 0$, we consider
\begin{equation}
\label{Eq:dense}
\psi_{\rm dense} \equiv \psi_{{\rm dense},C}:=\mathbbm{1}_{\{\max_{t\in\mathcal{T}}\|Y_t\|^2-p > C(\sqrt{p\log\log(8n)}\vee\log\log(8n))\}}.
\end{equation}
In this dense regime, the cut-off value in~\eqref{Eq:dense} is of the same order as that in~\eqref{eq:def-test}, and does not depend on the sparsity level $s$.  For the sparse regime, we consider a slightly different procedure from that used in Proposition~\ref{thm:upper}, namely
$$\psi_{\rm sparse} \equiv \psi_{{\rm sparse},C}:=\mathbbm{1}_{\{\max_{t\in\mathcal{T}}A_{t,a}>C\log\log(8n)\}}.$$
Combining the two tests, we obtain a testing procedure that is adaptive to both regimes, given by
\begin{equation}
\psi_{\rm adaptive} :=\psi_{\rm dense}\vee\psi_{\rm sparse}. \label{eq:def-test-adaptive}
\end{equation}
\fi
\begin{thm}\label{thm:adaptive}
	For any $\epsilon\in(0,1)$, there exists $C>0$, depending only on $\epsilon$, such that the testing procedure $\psi_{\rm adaptive}$ satisfies
	$$\sup_{\theta\in\Theta_0(p,n)}\mathbb{E}_{\theta}\psi_{\rm adaptive} + \sup_{\theta\in\Theta(p,n,s,\rho)}\mathbb{E}_{\theta}(1-\psi_{\rm adaptive})\leq\epsilon,$$
	as long as $\rho^2\geq 64Cr^*(p,n,s)$. 
\end{thm}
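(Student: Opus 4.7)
The plan is to bound the Type I and Type II errors of $\psi_{\rm adaptive}$ separately, reducing each to Proposition~\ref{thm:upper} applied to a single $\psi^{(s)}$.

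For the Type I error, the inequality $\psi_{\rm adaptive} \leq \sum_{s \in \mathcal{S}} \psi^{(s)}$ and a union bound give
\[
\sup_{\theta \in \Theta_0(p,n)} \mathbb{E}_\theta \psi_{\rm adaptive} \;\leq\; \sum_{s \in \mathcal{S}} \sup_{\theta \in \Theta_0(p,n)} \mathbb{E}_\theta \psi^{(s)}.
\]
Since $|\mathcal{S}| \leq 2 + \log_2 p$ grows only logarithmically, I would extract from the proof of Proposition~\ref{thm:upper} the quantitative dependence of the Type I error on the tuning constant $C$; provided this decay is polynomial or faster in $C$, inflating $C$ by a factor depending only on $\epsilon$ would suffice to make the sum at most $\epsilon/2$.

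For the Type II error, fix $\theta \in \Theta(p,n,s,\rho)$. The key step is to choose a comparison sparsity $\tilde s \in \mathcal{S}$ satisfying $\tilde s \geq s$ and $r^*(p,n,\tilde s) \leq 2\, r^*(p,n,s)$. If $s \leq \sqrt{p\log\log(8n)}/2$, I would take $\tilde s$ to be the smallest power of two in $\mathcal{S}$ that is at least $s$, so that $\tilde s \leq 2s$; a direct calculation with the sparse-regime formula in~\eqref{eq:minimax-rate} shows that $r^*(p,n,\cdot)$ at most doubles under doubling of its argument in this range. Otherwise I would take $\tilde s = p$ and verify from the two-regime formula that both $r^*(p,n,s)$ and $r^*(p,n,p)$ are of order $\sqrt{p\log\log(8n)}$ in a neighbourhood of the phase-transition boundary. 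Since $\Theta(p,n,s,\rho) \subseteq \Theta(p,n,\tilde s,\rho)$ and $\psi_{\rm adaptive} \geq \psi^{(\tilde s)}$, Proposition~\ref{thm:upper} yields $\sup_{\theta \in \Theta(p,n,s,\rho)}\mathbb{E}_\theta(1 - \psi_{\rm adaptive}) \leq \epsilon/2$ whenever $\rho^2 \geq 32 C r^*(p,n,\tilde s)$, which is implied by $\rho^2 \geq 64 C r^*(p,n,s)$. The factor $64 = 2 \cdot 32$ in the statement exactly absorbs the $\tilde s \leq 2 s$ rounding.

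The main obstacle is the Type I step. Since the stated rate $64 C\, r^*(p,n,s)$ permits only a factor-of-two inflation over Proposition~\ref{thm:upper}, all the cost of the union bound over $\mathcal{S}$ must be paid by enlarging $C$ rather than the threshold radius. This requires the Type I error in Proposition~\ref{thm:upper} to decay fast enough in $C$ that a $\log p$ union bound can be absorbed into a constant-factor increase of $C$; establishing such decay is a matter of inspecting the concentration bounds for $\max_{t \in \mathcal{T}} A_{t,a}$ used in the proof of Proposition~\ref{thm:upper} and verifying that the tail probability depends on $C$ at least polynomially (which it does, as the underlying bounds are Gaussian/chi-squared in nature).
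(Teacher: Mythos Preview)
Your Type~II argument is essentially correct and mirrors the paper's: pick a nearby grid point $\tilde s\in\mathcal{S}$, use $\psi_{\rm adaptive}\geq\psi^{(\tilde s)}$, and note that $r^*(p,n,\tilde s)\leq 2r^*(p,n,s)$. The paper rounds \emph{down} to $\tilde s\leq s$ and then embeds $\Theta(p,n,s,\rho)\subseteq\Theta(p,n,p\wedge 2\tilde s,\rho)$, while you round up; both work.

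The Type~I step, however, has a real gap. You claim that ``polynomial or faster'' decay of the per-test Type~I error in $C$ lets you absorb the $|\mathcal{S}|\asymp\log p$ union bound into a choice of $C$ depending only on $\epsilon$. This is false in general: the per-test bound from Proposition~\ref{thm:upper} is at best of order $\log(en)\,(\log(8n))^{-C/9}$, i.e.\ $e^{-c(n)C}$ with $c(n)\asymp\log\log(8n)$. To kill an extra factor of $\log p$ you would need $C\gtrsim(\log\log p)/\log\log(8n)$, which is unbounded when $p$ is large relative to $n$ (e.g.\ $p=e^{e^n}$). No uniform-in-$s$ bound can give a $p$-free constant here.

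The paper does \emph{not} rely on a uniform per-test bound. It exploits that the cutoff $r(s)=Cr^*(p,n,s)$ and threshold $a(s)$ vary with $s$: for small $s$ the admissible $x$ in Lemma~\ref{lem:trunc-chi-square-tail} can be taken as large as
\[
x(s)=\frac{C}{2}\Bigl(\frac{p\log^2\log(8n)}{s^2}\wedge r^*(p,n,s)\Bigr),
\]
which grows geometrically as $s$ halves. Summing $2\log(en)e^{-x(s)}$ over the dyadic grid then yields a series whose terms decay like $\exp\bigl(-\tfrac{C}{2}4^k\log\log(8n)\bigr)$ (from the first branch) and $\exp\bigl(-\tfrac{C}{4}s\log(ep\log\log(8n)/s^2)\bigr)$ (from the second), both of which are summable \emph{uniformly in $p$} once $C=C(\epsilon)$ is large enough. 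This $s$-dependent refinement is the missing idea.
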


Theorem \ref{thm:adaptive} shows that the minimax detection boundary (\ref{thm:minimax}) can be achieved adaptively without the knowledge of the sparsity level $s$. In the literature, changepoint detection with unknown sparsity was also investigated by \cite{enikeeva2013high}. Their procedure has a vanishing testing error as long as
\begin{equation}
\rho^2\gtrsim \min\left(\sqrt{p\log p}+\sqrt{p\log\log n}, s\log\frac{p}{s}\right),\label{eq:EH-rate}
\end{equation}
under the additional assumptions that $p,s\rightarrow\infty$, $s/p\rightarrow 0$, and $\frac{\log n}{s\log(p/s)}\rightarrow 0$. Comparing (\ref{eq:EH-rate}) with the optimal rate (\ref{thm:minimax}), we see that \cite{enikeeva2013high} successfully identified the $\sqrt{p\log\log n}$ term in the dense regime and the $s\log p$ term in the sparse regime. However, we can also observe that the $\sqrt{p\log p}$ term is not necessary, and the rate (\ref{eq:EH-rate}) is in general not sharp without the assumption $\frac{\log n}{s\log(p/s)}\rightarrow 0$, especially when the sparsity level $s$ is around $\sqrt{p\log\log n}$. 

\subsection{Asymptotic constants}
\label{SubSec:Constants}

A notable feature of our minimax detection boundary derived in Theorem \ref{thm:minimax} is that the rate is non-asymptotic, meaning that the result holds for arbitrary $n\geq 2$, $p \in \mathbb{N}$ and $s \in [p]$.  On the other hand, if we are allowed to make a few asymptotic assumptions, we can give explicit constants for the lower and upper bounds.  In this subsection, therefore, we let both the dimension $p$ and the sparsity $s$ be functions of $n$, and we consider asymptotics as $n\rightarrow\infty$.

\begin{thm}[Dense regime]\label{thm:asymp-dense}
	Assume that $s^2/(p\log\log n)\rightarrow\infty$ as $n\rightarrow\infty$. Then, with
	$$\rho=\xi\left(p\log\log n\right)^{1/4},$$
	we have $\mathcal{R}(\rho)\rightarrow 0$ when $\xi>\sqrt{2}$ and $\mathcal{R}(\rho)\rightarrow 1$ when $\xi<\sqrt{2}$.
\end{thm}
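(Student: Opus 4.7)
The proof consists of matching upper and lower bounds at the constant $\sqrt{2}$.

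For the upper bound ($\xi > \sqrt{2}$), I would use a standard CUSUM statistic $\tilde{Y}_t := \sqrt{t(n-t)/n}\bigl(\frac{1}{t}\sum_{s=1}^{t}X_s - \frac{1}{n-t}\sum_{s=t+1}^{n}X_s\bigr)$, which satisfies $\tilde{Y}_t \sim N_p(0,I_p)$ under $H_0$ and $\|\mathbb{E}_\theta(\tilde{Y}_{t_0})\|^2 = \rho^2$ at the true changepoint. Letting $\tilde{\mathcal{T}}$ be a grid of size $(\log n)^{1+o(1)}$ with multiplicative spacing slightly finer than dyadic (e.g.\ spacing $1+1/\log\log n$, using for instance $(\log n)(\log\log n)$ candidate locations), I would consider the test $\mathbbm{1}\{\max_{t \in \tilde{\mathcal{T}}}(\|\tilde{Y}_t\|^2 - p) > r\}$ with $r = 2\sqrt{(1+\eta)p\log\log n}$ for an arbitrary small $\eta > 0$. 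The Laurent--Massart $\chi_p^2$ tail bound combined with a union bound shows the Type I error vanishes, while a non-central $\chi_p^2$ concentration at a grid point $\tilde{t}$ close to $t_0$ (for which $\|\mathbb{E}(\tilde{Y}_{\tilde{t}})\|^2 \geq (1-o(1))\rho^2$) controls the Type II error whenever $\xi^2 > 2(1+\eta)$.

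For the lower bound ($\xi < \sqrt{2}$), I would employ the Bayes/Le Cam approach with a prior $\nu$ concentrated on dyadic changepoint locations: sample $k$ uniformly from $\{0,\ldots,K\}$ with $K = \lfloor\log_2(\epsilon_n n)\rfloor$ for $\epsilon_n \to 0$, sample $u \in \{\pm 1\}^p$ uniformly and independently of $k$, and set $\theta^{(k,u)}_{jt} = 2^{-k/2}\beta u_j\mathbbm{1}_{\{t \leq 2^k\}}$ with $\beta$ chosen so that $p\beta^2 = (1+o(1))\rho^2$. The Bayes likelihood ratio against $H_0$ is then $L = m^{-1}\sum_k L_k$ with $L_k = e^{-p\beta^2/2}\prod_j\cosh(2^{-k/2}\beta S_j^{(k)})$ and $S_j^{(k)} := \sum_{t \leq 2^k} X_{jt}$. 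Expanding $\log\cosh(x) = x^2/2 - x^4/12 + \ldots$ shows $\log L_k \approx (\beta^2/2)(Z_k - p) - p\beta^4/4$ where $Z_k := \|S^{(k)}\|^2/2^k \sim \chi_p^2$; equivalently $L_k \approx \exp(cY_k - c^2/2)$ with $c := (\xi^2/\sqrt{2})\sqrt{\log\log n}$ and $Y_k := (Z_k - p)/\sqrt{2p}$. The quartic correction $-p\beta^4/4$ is essential for producing the correct constant $\cosh(\beta^2)^p \asymp (\log n)^{\xi^4/2}$ rather than $(\log n)^{\xi^4}$.

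The naive second moment bound $\chi^2(\bar{P}_1 \| P_0) \asymp (\log n)^{\xi^4/2 - 1}$ only gives the loose lower-bound constant $2^{1/4}$. To sharpen to $\sqrt{2}$, I would employ a truncated second moment method with $A := \{\max_k L_k \leq M\}$ and $M = (\log n)^{\xi^2 - \xi^4/4 + \eta}$ for small $\eta > 0$. There are three ingredients to verify: (i) $\mathbb{P}_{H_0}(A^c) \to 0$ via the Gaussian tail of $Y_k$ together with a union bound over $k$ (the threshold on $Y_k$ exceeds $\sqrt{2\log\log n}$ asymptotically); (ii) $\mathbb{P}_{H_1}(A^c) \to 0$, since $\lambda := \xi^2 - \xi^4/4 + \eta$ exceeds $\xi^4/4$ for $\xi^2 < 2$, and the typical value of $L_{k^\ast}$ under $H_1$ at the true scale $k^\ast$ is $(\log n)^{\xi^4/4}$; and (iii) $\mathbb{E}_{H_0}[L^2 \mathbbm{1}_A] \to 1$, where the diagonal ($k_1 = k_2$) contribution is $(\log n)^{\xi^4/2 - (\xi^2 - 1)^2}/m = (\log n)^{-(\xi^2 - 2)^2/2 + o(1)} \to 0$ via a restricted Gaussian-tail calculation, and the off-diagonal ($k_1 \neq k_2$) contribution converges to $1$ by approximate independence (the correlation $\mathrm{Corr}(Z_{k_1},Z_{k_2}) = 2^{-|k_1 - k_2|}$ decays geometrically, and $\mathbb{E}[L_{k_1}\mathbbm{1}_{L_{k_1} \leq M}] \to 1$ for each $k_1$ with $\lambda > \xi^4/4$). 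A standard truncated-TV inequality $\TV(\bar{P}_1,P_0) \leq P_0(A^c) + P_1(A^c) + \sqrt{\mathbb{E}_{H_0}[L^2\mathbbm{1}_A] - 2P_1(A) + P_0(A)}$ then yields $\TV \to 0$, hence $\mathcal{R}(\rho) \to 1$.

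The main obstacle is the delicate bookkeeping in the truncated second moment calculation, particularly (a) tracking the quartic correction so that the sharp constant emerges as $(\xi^2 - 2)^2$ rather than its square root---omitting it would cost a factor of $2^{1/4}$---and (b) showing both the upper bound $\mathbb{E}_{H_0}[L^2\mathbbm{1}_A] \leq 1 + o(1)$ and the matching Cauchy--Schwarz lower bound $\mathbb{E}_{H_0}[L^2\mathbbm{1}_A] \geq P_1(A)^2 / P_0(A) \to 1$ simultaneously, so that the conditional variance of $L$ on $A$ vanishes in the limit. The off-diagonal analysis requires care in writing $\mathbb{E}[L_{k_1} L_{k_2} \mathbbm{1}_A]$ as a nearly-product term up to the geometric correlation correction, which is managed by the decay of $\mathbb{E}[L_{k_1} L_{k_2}] = \cosh(\beta^2 \, 2^{-|k_1-k_2|/2})^p$.
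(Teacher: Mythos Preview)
Your upper bound is correct and essentially identical to the paper's: the same CUSUM statistic $\widetilde{Y}_t$, a slightly-finer-than-dyadic grid $\mathcal{T}_{\delta_2}$, and the Laurent--Massart / non-central $\chi^2$ tail bounds.

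Your lower bound has a genuine gap. The prior you describe places nonzero mean change on \emph{all} $p$ coordinates, so $\supp(\nu)\subseteq\Theta(p,n,p,\rho)$ but $\supp(\nu)\not\subseteq\Theta(p,n,s,\rho)$ when $s<p$. Because $\Theta(p,n,s,\rho)\subseteq\Theta(p,n,p,\rho)$, a lower bound for the $s=p$ problem does \emph{not} transfer to smaller $s$, yet the theorem must cover every $s$ with $s^2/(p\log\log n)\to\infty$, including $s$ barely above $\sqrt{p\log\log n}$. Simply restricting to a fixed subset of size $s$ does not work either: with $\beta^2=\xi^2\sqrt{p\log\log n}/s$ one gets $\mathbb{E}[L_k^2]=\cosh(\beta^2)^s\approx e^{s\beta^4/2}=(\log n)^{\xi^4 p/(2s)}$, which blows up far faster than $\log n$ unless $s\asymp p$.

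The paper repairs this by drawing a \emph{uniformly random} subset $S\subseteq[p]$ of size $s$, which restores $\mathbb{E}[L_k^2]\approx(\log n)^{\xi^4/2}$ via the hypergeometric $|S\cap T|$. But then $L_k$ is no longer a simple function of a $\chi^2$ statistic, so your restricted-MGF route for the diagonal does not apply directly. Instead the paper works with the Ingster--Suslina identity $\int_{A_n}f_1^2/f_0=\mathbb{E}_{\theta_1,\theta_2}\bigl[e^{\langle\theta_1,\theta_2\rangle}\mathbb{P}_{N(\theta_1+\theta_2,I)}(A_n)\bigr]$ and, for $k=l$, splits on whether $\|\tilde u+\tilde v\|^2\gtrless(2-\epsilon_2)s$ (with $\tilde u,\tilde v$ the sign vectors restricted to $S,T$). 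In the large case the non-centrality of the relevant $\chi^2$ exceeds $(2+\epsilon_1)\sqrt{p\log\log n}$ and an exact non-central $\chi^2$ tail bound kills $\mathbb{P}(A_n)$; in the small case Hoeffding gives probability $\le e^{-\epsilon_2^2 s/8}$, which beats the crude bound $e^{2s\beta^2}=e^{O(\sqrt{p\log\log n})}$ \emph{precisely} because $s\gg\sqrt{p\log\log n}$. This is where the hypothesis $s^2/(p\log\log n)\to\infty$ actually enters the proof; in your sketch it is never used. The paper also thins the scales to a grid with spacing $\lfloor c\log\log\log n\rfloor$, which makes the off-diagonal term trivially $1+o(1)$; your geometric-decay argument for the off-diagonal is a valid alternative here, but the diagonal requires the random-subset machinery above.
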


\begin{thm}[Sparse regime]\label{thm:asymp-sparse}
	Assume that $s^2/p\rightarrow 0$ and $s/\log\log n \rightarrow\infty$ as $n\rightarrow\infty$. Then, with
	$$\rho=\xi \sqrt{s\log\left(\frac{p\log\log n}{s^2}\right)},$$
	we have $\mathcal{R}(\rho)\rightarrow 0$ when $\xi>\sqrt{2}$ and $\mathcal{R}(\rho)\rightarrow 1$ when $\xi<1$.
\end{thm}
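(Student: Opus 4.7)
The proof splits into an upper bound ($\xi>\sqrt 2$) and a lower bound ($\xi<1$). Throughout, the standing assumptions $s^2/p\to 0$ and $s/\log\log n\to\infty$ place us in the interior of the sparse regime, where the dominant term of the rate from Theorem~\ref{thm:minimax} is $s\log(p\log\log n/s^2)$.

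\emph{Upper bound.} I would analyse a sharpened version of the thresholded CUSUM test from~\eqref{eq:def-test}, with threshold $a^2=2\log(p\log\log n/s^2)(1-o(1))$ and critical value $r$ just above the null fluctuations. Under $H_0$, the statistic $A_{t,a}$ is a centred sum of $p$ iid truncated chi-squared contributions whose variance is of order $s^2/(a\log\log n)$, so a union bound over $|\mathcal T|\asymp\log\log n$ scales gives $\max_{t\in\mathcal T}A_{t,a}=O(s/\sqrt a)$ with probability $1-o(1)$. Under $H_1$ with changepoint at $t_0$, choose $\wt t\in\mathcal T$ as close as possible to $\min(t_0,n-t_0)$; then $\|\mathbb E_\theta Y_{\wt t}\|^2$ is comparable to $\rho^2$, and in the least-favourable uniform configuration each of the $s$ nonzero coordinates satisfies $|\mathbb E_\theta Y_{\wt t}(j)|^2\asymp\xi^2\log(p\log\log n/s^2)$. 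For $\xi>\sqrt 2$ this exceeds $a^2$, and one shows that $\mathbb E_\theta[A_{\wt t,a}]$ is of order $s\log(p\log\log n/s^2)$, which dominates $r$. If necessary, replacing $\mathcal T$ by a slightly finer geometric grid with multiplicative spacing $1+o(1)$ ensures that the unavoidable discretisation loss in the CUSUM contributes only $1+o(1)$ to the leading constant.

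\emph{Lower bound.} I would employ the Bayesian reduction $\mathcal R(\rho)\ge 1-\tfrac12\sqrt{\chi^2(\mathbb P_\nu\|\mathbb P_0)}$ against a mixture prior $\nu$ supported on $\Theta(p,n,s,\rho)$. Construct $\nu$ as follows: draw $k\sim\text{Unif}\{0,\ldots,K\}$ with $K=\floor{\log_2(n/2)}$, a uniform random $s$-subset $S\subseteq[p]$, and iid signs $\epsilon_j\in\{\pm 1\}$ for $j\in S$; set $\theta_{jt}=\beta_k\epsilon_j\mathbbm{1}_{\{j\in S,\,t\le 2^k\}}$ with $\beta_k^2=\xi^2\log(p\log\log n/s^2)/2^k$, so that the realised signal strength is exactly $\xi^2 s\log(p\log\log n/s^2)$. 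For $(\theta_1,\theta_2)\sim\nu\otimes\nu$ with associated parameters $(k,S,\epsilon)$ and $(k',S',\epsilon')$, and with $L:=\log(p\log\log n/s^2)$, a direct computation gives
\[
\iprod{\theta_1}{\theta_2}=\xi^2 L\cdot 2^{-|k-k'|/2}\sum_{j\in S\cap S'}\epsilon_j\epsilon_j'.
\]
Averaging out the signs yields $\cosh(\xi^2 L\cdot 2^{-|k-k'|/2})^{|S\cap S'|}$, and the moment-generating-function bound for the hypergeometric law of $|S\cap S'|$ then gives $\mathbb E_\nu[\exp\iprod{\theta_1}{\theta_2}\mid k,k']\le\exp\bigl((s^2/p)(\cosh(\xi^2 L\cdot 2^{-|k-k'|/2})-1)\bigr)$. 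Finally, averaging over $(k,k')$ with weight $1/(K+1)^2$ partitions into a diagonal contribution of order $(1/\log n)\exp((s^2/p)(\cosh(\xi^2 L)-1))$ and off-diagonal terms decaying geometrically in $|k-k'|$; both vanish for $\xi<1$ because $(s^2/p)^{1-\xi^2}(\log\log n)^{\xi^2}=o(\log n)$ under $s^2/p\to 0$.

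\emph{Main obstacle.} The delicate part is the lower bound chi-squared calculation: verifying that the diagonal $k=k'$ contribution is $o(1)$ for every $\xi<1$ under only the assumption $s^2/p\to 0$ requires the $1/\log n$ factor from the scale prior to precisely absorb the $(\log\log n)^{\xi^2}$ blow-up from the inner hypergeometric bound, and the combinatorial estimate on $|S\cap S'|$ must be tight throughout the relevant regime. On the upper bound side, extracting the sharp constant $\sqrt 2$ requires careful moment estimates for the truncated-Gaussian test statistic together with a grid refinement balancing the unavoidable CUSUM discretisation loss against the growing union bound penalty, so that neither source degrades the leading constant.
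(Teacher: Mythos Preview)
Your upper bound is essentially the paper's argument: it uses the same threshold $a^2=2\log(p\log\log n/s^2)$, the same refined grid $\mathcal T_{\delta_2}$ with multiplicative spacing $1+\delta_2$, and the same idea that under the alternative the ``bad'' coordinates with $|\Delta_j|<(1+\delta)a$ can absorb at most $s(1+\delta)^2a^2=(1+\delta)^2\cdot 2sL$ of the total signal, which is strictly less than $\rho^2$ once $\xi^2>2(1+\delta)^2$. The paper formalises the per-coordinate contribution through the sharp inequality $\mathbb E[(Y^2-\nu_a)\mathbbm 1_{|Y|\ge a}]\ge c_1^*\theta^2$ valid for $|\theta|\ge(1+\delta)a$ (Lemma~\ref{lem:trunc-chi-square-mean}), but your reasoning is the same in substance.

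Your lower bound differs from the paper's in two respects, and one of these hides a gap. First, a harmless difference: the paper's prior in the sparse regime uses \emph{no} Rademacher signs (all nonzero entries are $+\beta/\sqrt{2^k}$), whereas you include them; after the hypergeometric bound your $\cosh$ is simply dominated by the paper's $\exp$, so either works. Second, and more importantly, the paper places $k$ on a \emph{coarse} grid with spacing $\lfloor\log\log\log n\rfloor$ rather than your spacing~$1$. This is precisely to trivialise the off-diagonal: when $k\neq l$ one automatically has $|k-l|\ge\lfloor\log\log\log n\rfloor$, so $\beta^2/2^{|k-l|/2}\to 0$ and the off-diagonal contribution is $\exp(o(1))\to 1$ in one line.

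With your fine grid, your assertion that the off-diagonal terms ``decay geometrically in $|k-k'|$'' is not correct. Writing $g(m)=(s^2/p)(\cosh(\xi^2L\cdot 2^{-m/2})-1)$ with $L=\log(p\log\log n/s^2)$, the quantities $\exp(g(m))$ for $m=1,2,\ldots$ up to roughly $2\log_2(\xi^2L)$ are all of the \emph{same} order as the diagonal term $\exp(g(0))$; only beyond that do they drop to $1+o(1)$. Your approach can still be rescued: there are only $O(\log L)$ such ``large'' near-diagonal terms, each carries weight $O(1/\log n)$, and $\exp(g(0))\le\exp((\log\log n)^{\xi^2})=o(\log n)$ for $\xi<1$, so their total contribution is $o(1)$. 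But this requires a genuine two-regime split in $m$, not a geometric-decay bound, and you would need to track that $L$ (hence the number of problematic $m$) stays under control in the regime where $g(0)\not\to 0$. The paper's coarsened grid sidesteps all of this.
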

These two theorems characterize the asymptotic minimax upper and lower bounds of the changepoint detection problem under dense and sparse asymptotics respectively.  While we are able to nail down the exact asymptotic constant in the dense regime, the optimal asymptotic constant in the sparse regime is a more involved problem (indeed, it appears to depend on more refined aspects of the asymptotic regime), and we therefore leave it as an open problem for future research.

\section{Spatial dependence}\label{sec:spatial}

In this section, we consider changepoint detection in settings with cross-sectional dependence in the $p$ coordinates. To be specific, we now relax our previous assumption on the cross-sectional distribution by supposing only that $X_t\sim N_p(\theta_t,\Sigma)$ for some general positive definite covariance matrix $\Sigma \in \mathbb{R}^{p \times p}$; the goal remains to solve the testing problem~\eqref{eq:def-testing-problem}. We retain the notation $\mathbb{P}_{\theta}$ and $\mathbb{E}_{\theta}$ for probabilities and expectations, with the dependence on $\Sigma$ suppressed.  Similar to Definition \ref{def:minimax-seperation}, we use the notation $\rho^*_{\Sigma}(p,n,s)$ for the minimax rate of testing in this problem with cross-sectional covariance $\Sigma$.

Our first result provides the minimax rate of the detection boundary in the dense case where $s=p$. This sets up a useful benchmark on the difficulty of the problem depending on the covariance structure. 
\begin{thm}\label{thm:minimax-spatial}
	In the case $s=p$, the minimax rate of testing is given by
	\begin{equation}
	\rho_{\Sigma}^*(p,n,p)^2\asymp\fnorm{\Sigma}\sqrt{\log\log(8n)} \vee \opnorm{\Sigma}\log\log(8n). \label{eq:minimax-spatial-dense}
	\end{equation}
\end{thm}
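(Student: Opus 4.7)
The plan is to treat the upper and lower bounds separately, since each of the two summands in the rate has a distinct origin.

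\textbf{Upper bound.} The natural generalization of~\eqref{eq:def-test} when $s = p$ and the noise covariance is $\Sigma$ is to use the CUSUM $Y_t$ from~\eqref{eq:Y_t} and form the centered test statistic $A_t := \|Y_t\|^2 - \Tr(\Sigma)$ for $t \in \mathcal{T}$, rejecting when $\max_{t \in \mathcal{T}} A_t > r$. Under $H_0$ we have $Y_t \sim N_p(0,\Sigma)$, so $A_t$ is a centered Gaussian quadratic form and the Hanson--Wright inequality yields
\[
\Prob_0\bigl(|A_t| > x\bigr) \leq 2\exp\biggl(-c\min\biggl\{\frac{x^2}{\fnorm{\Sigma}^2},\,\frac{x}{\opnorm{\Sigma}}\biggr\}\biggr).
\]
Union-bounding over $t \in \mathcal{T}$ (with $|\mathcal{T}| \lesssim \log n$) controls the Type~I error at a threshold $r \asymp \fnorm{\Sigma}\sqrt{\log\log(8n)} \vee \opnorm{\Sigma}\log\log(8n)$. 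For the Type~II error I would reuse the dyadic-cover observation from Proposition~\ref{thm:upper}: any changepoint at $t_0$ admits $\wt t \in \mathcal{T}$ with $t_0/2 < \wt t \leq t_0$, so $\|\E_\theta Y_{\wt t}\|^2 \asymp \rho^2$. The decomposition
\[
A_{\wt t} = \|\E_\theta Y_{\wt t}\|^2 + 2\iprod{\E_\theta Y_{\wt t}}{Y_{\wt t} - \E_\theta Y_{\wt t}} + \bigl(\|Y_{\wt t} - \E_\theta Y_{\wt t}\|^2 - \Tr(\Sigma)\bigr)
\]
then reduces matters to Gaussian concentration for the linear noise term (whose variance is at most $\opnorm{\Sigma}\,\|\E_\theta Y_{\wt t}\|^2$) and Hanson--Wright for the centered quadratic term, showing $A_{\wt t} > r$ with high probability provided $\rho^2$ exceeds a sufficiently large multiple of $r$.

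\textbf{Lower bound.} The two summands $\opnorm{\Sigma}\log\log(8n)$ and $\fnorm{\Sigma}\sqrt{\log\log(8n)}$ come from separate priors. For the operator-norm term, I would restrict the alternative to signals $\mu_1 - \mu_2 = \alpha v_1$, where $v_1$ is a unit top eigenvector of $\Sigma$. Then $v_1^T X_t \sim N(v_1^T\theta_t, \opnorm{\Sigma})$ is a scalar sequence carrying all of the signal, so the one-dimensional bound~\eqref{eq:gao}, rescaled by noise $\opnorm{\Sigma}^{1/2}$, gives $\rho^2 \gtrsim \opnorm{\Sigma}\log\log(8n)$. For the Frobenius-norm term, I would adapt the Rademacher prior from the deleted sketch following Proposition~\ref{thm:lower}: draw $k \sim \mathrm{Unif}\{0,1,\ldots,\floor{\log_2(n/2)}\}$ and independent Rademachers $\epsilon_1,\ldots,\epsilon_p$; letting $\{(\lambda_i,v_i)\}$ denote the spectral decomposition of $\Sigma$, set $\theta_t := 2^{-k/2}\beta \sum_i \lambda_i \epsilon_i v_i$ for $t \in [2^k]$ and $\theta_t := 0$ otherwise. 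By the Ingster--Suslina identity,
\[
\chi^2(\Prob_\nu \,\|\, \Prob_0) + 1 = \E\exp\biggl(\sum_{t=1}^n \iprod{\theta_{1,t}}{\theta_{2,t}}_{\Sigma^{-1}}\biggr),
\]
and using $v_i^T \Sigma^{-1} v_j = \lambda_i^{-1}\delta_{ij}$ the exponent collapses to $2^{-|k-l|/2}\beta^2 \sum_i \lambda_i \epsilon_i \epsilon_i'$. Applying $\cosh(x)\leq e^{x^2/2}$ coordinatewise bounds the Rademacher expectation by $\exp\bigl(2^{-|k-l|}\beta^4\fnorm{\Sigma}^2/2\bigr)$, after which averaging over $(k,l)$ is identical to the $\Sigma = I_p$ calculation with $p$ replaced by $\fnorm{\Sigma}^2$; the chi-squared remains $O(1)$ as long as $\rho^2 \lesssim \fnorm{\Sigma}\sqrt{\log\log(8n)}$.

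The main obstacle I anticipate is the Type~II analysis, specifically getting the correct interplay between the two norms. Because the cross term $\iprod{\E_\theta Y_{\wt t}}{Y_{\wt t}-\E_\theta Y_{\wt t}}$ has variance bounded by $\opnorm{\Sigma}\|\E_\theta Y_{\wt t}\|^2$ rather than $\fnorm{\Sigma}\|\E_\theta Y_{\wt t}\|^2$, one needs to show that this term does not swamp the signal once $\rho^2$ crosses the operator-norm-driven threshold, which is what ultimately forces the \emph{maximum} $\fnorm{\Sigma}\sqrt{\log\log(8n)} \vee \opnorm{\Sigma}\log\log(8n)$ to appear (rather than just the Frobenius piece). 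Everything else should be a fairly mechanical adaptation of the arguments behind Theorem~\ref{thm:minimax} in the regime $s = p$.
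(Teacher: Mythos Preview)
Your proposal is correct and follows essentially the same route as the paper. The only cosmetic difference is that the paper begins by diagonalizing $\Sigma = U\Lambda U^T$ and passing to $U^T X_t$, after which the Laurent--Massart weighted chi-squared bound (Lemma~\ref{lem:chi-square-tail}) replaces your direct appeal to Hanson--Wright, and the lower-bound priors are written in the eigenbasis (with coordinate amplitudes $a_j \propto \lambda_j$, which coincides with your choice $\beta\sum_i\lambda_i\epsilon_i v_i$); your operator-norm reduction to the top eigenvector direction is exactly the paper's restriction to the first coordinate after diagonalization.
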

For $\Sigma=I_p$, Theorem~\ref{thm:minimax-spatial} yields $\rho_{\Sigma}^*(p,n,p)^2\asymp\sqrt{p\log\log(8n)}\vee\log\log(8n)$, which recovers the result of Theorem~\ref{thm:minimax} when $s=p$. The more general result for $s<p$ with a non-diagonal~$\Sigma$ is hard to obtain. This is because the proof of Theorem \ref{thm:minimax-spatial} relies on a diagonalization argument of the covariance matrix, which can affect the sparsity pattern of the change in the mean unless additional assumptions on $\Sigma$ similar to \cite{hall2010innovated} are imposed.

% To achieve the testing rate~\eqref{eq:minimax-spatial-dense}, an optimal procedure is
A test that achieves the optimal rate~\eqref{eq:minimax-spatial-dense} is given by
\begin{equation}
\psi := \mathbbm{1}_{\left\{\max_{t\in\mathcal{T}}\|Y_t\|^2-\Tr(\Sigma) > C\left(\fnorm{\Sigma}\sqrt{\log\log(8n)} \vee \opnorm{\Sigma}\log\log(8n)\right)\right\}}, \label{eq:def-test-spatial-cov-known}
\end{equation}
for an appropriate choice of $C > 0$.  Though optimal, the procedure (\ref{eq:def-test-spatial-cov-known}) relies on knowledge of $\Sigma$. In fact, one only needs to know $\Tr(\Sigma)$, $\fnorm{\Sigma}$ and $\opnorm{\Sigma}$, rather than the entire covariance matrix~$\Sigma$.  To be even more specific, from a careful examination of the proof, we see that we only need to know $\Tr(\Sigma)$ up to an additive error that is at most of the same order as the cut-off, whereas knowledge of the orders of $\fnorm{\Sigma}$ and $\opnorm{\Sigma}$, up to multiplication by universal constants, is enough.

We now discuss how to use $X$ to estimate the three quantities $\Tr(\Sigma), \fnorm{\Sigma}$ and $\opnorm{\Sigma}$. The solution would be straightforward if we knew the location of the changepoint, but in more typical situations where the changepoint location is unknown, this becomes a robust covariance functional estimation problem. 
We assume that $n\geq 6$ and that $n/3$ is an integer, since a simple modification can be made if $n/3$ is not a integer. We can then divide $[n]$ into three consecutive blocks $\mathcal{D}_1,\mathcal{D}_2,\mathcal{D}_3$, each of whose cardinalities is $n/3\geq 2$.  For $j \in [3]$, we compute the sample covariance matrix
$$\wh{\Sigma}_{\mathcal{D}_j}:=\frac{1}{|\mathcal{D}_j|-1}\sum_{t\in\mathcal{D}_j}(X_t-\bar{X}_{\mathcal{D}_j})(X_t-\bar{X}_{\mathcal{D}_j})^T,$$
where $\bar{X}_{\mathcal{D}_j} := |\mathcal{D}_j|^{-1}\sum_{t \in \mathcal{D}_j} X_t$.  We can then order these three estimators according to their trace, and Frobenius and operator norms, yielding
\begin{eqnarray*}
	&&\Tr(\wh{\Sigma})^{(1)} \leq \Tr(\wh{\Sigma})^{(2)} \leq \Tr(\wh{\Sigma})^{(3)}, \\
	&&\fnorm{\wh{\Sigma}}^{(1)} \leq \fnorm{\wh{\Sigma}}^{(2)} \leq \fnorm{\wh{\Sigma}}^{(3)}, \\
	&&\opnorm{\wh{\Sigma}}^{(1)} \leq \opnorm{\wh{\Sigma}}^{(2)} \leq \opnorm{\wh{\Sigma}}^{(3)}.
\end{eqnarray*}
The idea is that at least two of the three covariance matrix estimators $\wh{\Sigma}_{\mathcal{D}_1}, \wh{\Sigma}_{\mathcal{D}_2}, \wh{\Sigma}_{\mathcal{D}_3}$ should be accurate, because there is at most one changepoint location. This motivates us to take the medians $\Tr(\wh{\Sigma})^{(2)}$, $\fnorm{\wh{\Sigma}}^{(2)}$ and $\opnorm{\wh{\Sigma}}^{(2)}$ with respect to the three functionals as our robust estimators.  It is convenient to define $\Theta(p,n,s,0) := \Theta_0(p,n) \cup \left(\cup_{\rho > 0} \Theta(p,n,s,\rho)\right)$.
\begin{proposition}\label{prop:est-cov}
	Assume $p \leq cn$ for some $c > 0$, and fix an arbitrary positive definite $\Sigma \in \mathbb{R}^{p \times p}$ and $\theta\in\Theta(p,n,p,0)$.  Then given $\epsilon > 0$, there exists $C>0$, depending only on $c$ and $\epsilon$, such that 
	\begin{align*}
	\bigl|\Tr(\wh{\Sigma})^{(2)}-\Tr(\Sigma)\bigr| &\leq C\left(\frac{\sqrt{p}\fnorm{\Sigma}}{\sqrt{n}}+\frac{p\opnorm{\Sigma}}{n}\right), \\
	\bigl|\fnorm{\wh{\Sigma}}^{(2)}-\fnorm{\Sigma}\bigr| &\leq C\opnorm{\Sigma}\sqrt{\frac{p^2}{n}}\\
	\bigl|\opnorm{\wh{\Sigma}}^{(2)}-\opnorm{\Sigma}\bigr| &\leq C\opnorm{\Sigma}\sqrt{\frac{p}{n}},
	\end{align*}
	with $\mathbb{P}_\theta$-probability at least $1-\epsilon/4$.
	%, uniformly over all positive semi-definite $\Sigma \in \mathbb{R}^{p \times p}$ and all $\theta\in\Theta(p,n,p,0)$. 
\end{proposition}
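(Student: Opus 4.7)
The plan is to exploit the fact that, because there is at most one changepoint, at most one of the three consecutive blocks $\mathcal{D}_1,\mathcal{D}_2,\mathcal{D}_3$ can contain it; hence at least two of them are ``clean'' in the sense that on such a block $\mathcal{D}_j$ the observations $(X_t)_{t\in\mathcal{D}_j}$ are i.i.d.\ $N_p(\mu_j,\Sigma)$ for some $\mu_j\in\mathbb{R}^p$. On each clean block, $(|\mathcal{D}_j|-1)\wh{\Sigma}_{\mathcal{D}_j}$ has a Wishart distribution with $n/3-1$ degrees of freedom and scale matrix $\Sigma$. The first step is therefore to establish, for each clean block and each of the three functionals $F\in\{\Tr,\fnorm{\cdot},\opnorm{\cdot}\}$, a per-block deviation bound $|F(\wh{\Sigma}_{\mathcal{D}_j})-F(\Sigma)|\leq\delta_F$, where $\delta_F$ matches the corresponding right-hand side in the proposition, with probability at least $1-\epsilon/12$.

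For the operator norm, I would invoke the standard Davidson--Szarek Wishart deviation bound to obtain $\opnorm{\wh{\Sigma}_{\mathcal{D}_j}-\Sigma}\lesssim \opnorm{\Sigma}\sqrt{p/n}$ under the assumption $p\leq cn$, which is used precisely to absorb the quadratic $p/n$ term into $\sqrt{p/n}$. The reverse triangle inequality then gives $|\opnorm{\wh{\Sigma}_{\mathcal{D}_j}}-\opnorm{\Sigma}|\leq\opnorm{\wh{\Sigma}_{\mathcal{D}_j}-\Sigma}$, and the elementary bound $\fnorm{A}\leq\sqrt{p}\opnorm{A}$ yields $|\fnorm{\wh{\Sigma}_{\mathcal{D}_j}}-\fnorm{\Sigma}|\leq\fnorm{\wh{\Sigma}_{\mathcal{D}_j}-\Sigma}\leq\sqrt{p}\opnorm{\wh{\Sigma}_{\mathcal{D}_j}-\Sigma}$, reproducing the claimed rates for these two functionals. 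For the trace, I would write $(m-1)\Tr(\wh{\Sigma}_{\mathcal{D}_j})=\sum_{i=1}^{m-1}\xi_i^{T}\Sigma\xi_i$ for i.i.d.\ $\xi_i\sim N_p(0,I_p)$ (with $m=n/3$) and apply a Hanson--Wright / Laurent--Massart deviation inequality for Gaussian quadratic forms; this gives the stronger bound $|\Tr(\wh{\Sigma}_{\mathcal{D}_j})-\Tr(\Sigma)|\lesssim \fnorm{\Sigma}/\sqrt{n}+\opnorm{\Sigma}/n$, which in particular implies the (looser) stated bound $\sqrt{p}\fnorm{\Sigma}/\sqrt{n}+p\opnorm{\Sigma}/n$.

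A union bound over the (at most) nine events -- three functionals times three blocks -- controls the failure probability by $\epsilon/4$ once the constant $C$ is chosen large enough in the preceding deviation inequalities. On the resulting good event, for each functional $F$ at least two of the three values $F(\wh{\Sigma}_{\mathcal{D}_1}), F(\wh{\Sigma}_{\mathcal{D}_2}), F(\wh{\Sigma}_{\mathcal{D}_3})$ lie in $[F(\Sigma)-\delta_F,F(\Sigma)+\delta_F]$. A deterministic ``median-of-three'' argument then finishes the proof: for any reals $a\leq b\leq c$, if any two of $\{a,b,c\}$ lie in an interval $[x-\delta,x+\delta]$ then the middle value $b$ automatically does as well, so $|F(\wh{\Sigma})^{(2)}-F(\Sigma)|\leq\delta_F$ for each of the three functionals simultaneously.

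The main obstacle is the operator-norm Wishart deviation bound in the regime $p\asymp n$; the Frobenius and trace bounds then follow either from elementary matrix inequalities or, for the trace, from a Hanson--Wright application that is actually strictly sharper than needed. The structural part of the argument -- the ``at most one dirty block'' observation and the median-of-three principle -- is robust and requires no delicate constants; the hypothesis $p\leq cn$ enters only to guarantee that $p/n$ is dominated by $\sqrt{p/n}$ so that the Davidson--Szarek bound can be used in the clean form stated above.
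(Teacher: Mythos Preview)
Your proposal is correct and follows essentially the same approach as the paper: both use the ``at most one dirty block'' observation, a Wishart operator-norm deviation bound (the paper cites Koltchinskii rather than Davidson--Szarek, but these are interchangeable here), derive the Frobenius bound via $\fnorm{A}\leq\sqrt{p}\opnorm{A}$, use a Laurent--Massart-type inequality for the trace, and finish with the median-of-three argument. Your observation that the trace bound is actually sharper than stated is correct; the paper simply takes $x\asymp p$ in the concentration inequality so as to match the form in the proposition statement rather than the tightest possible rate.
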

With the help of Proposition~\ref{prop:est-cov}, we can plug the estimators $\Tr(\wh{\Sigma})^{(2)}, \fnorm{\wh{\Sigma}}^{(2)}$ and $\opnorm{\wh{\Sigma}}^{(2)}$ into the procedure (\ref{eq:def-test-spatial-cov-known}). This test is adaptive to the unknown covariance structure, and comes with the following performance guarantee.

\begin{corollary}\label{cor:upper-cov-spatial}
	Assume that $\sqrt{p}\opnorm{\Sigma}\leq A\fnorm{\Sigma}$ for some $A > 0$.  Then given $\epsilon > 0$, there exist $c,C > 0$, depending only on $A$ and $\epsilon$, such that if $p \leq cn$, then the testing procedure
	$$\psi_{\mathrm{Cov}} := \mathbbm{1}_{\{\max_{t\in\mathcal{T}}\|Y_t\|^2-\Tr(\wh{\Sigma})^{(2)} > C(\fnorm{\wh{\Sigma}}^{(2)}\sqrt{\log\log(8n)} \vee \opnorm{\wh{\Sigma}}^{(2)}\log\log(8n))\}}$$
	satisfies
	$$\sup_{\theta\in\Theta_0(p,n)}\mathbb{E}_{\theta}\psi_{\mathrm{Cov}} + \sup_{\theta\in\Theta(p,n,p,\rho)}\mathbb{E}_{\theta}(1-\psi_{\mathrm{Cov}})\leq\epsilon,$$
	as long as $\rho^2\geq 64C\left(\fnorm{\Sigma}\sqrt{\log\log(8n)}\vee\opnorm{\Sigma}\log\log(8n)\right)$.
\end{corollary}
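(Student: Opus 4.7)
The plan is to reduce the analysis of $\psi_{\mathrm{Cov}}$ to that of the oracle test~\eqref{eq:def-test-spatial-cov-known}, which underlies the upper bound in Theorem~\ref{thm:minimax-spatial}. Abbreviate $r_\Sigma^\star := \fnorm{\Sigma}\sqrt{\log\log(8n)} \vee \opnorm{\Sigma}\log\log(8n)$ for the oracle rate. First I would invoke Proposition~\ref{prop:est-cov} with $\epsilon/4$ in place of $\epsilon$, obtaining a constant $C_0 = C_0(c,\epsilon)$ and, for every $\theta \in \Theta(p,n,p,0) \supseteq \Theta_0(p,n)\cup\Theta(p,n,p,\rho)$, an event $\mathcal{A}$ of $\mathbb{P}_\theta$-probability at least $1-\epsilon/4$ on which the three stated functional deviation bounds hold simultaneously.

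Next, I would combine those deviation bounds with the assumptions $p \leq cn$ and $\sqrt{p}\opnorm{\Sigma} \leq A\fnorm{\Sigma}$ to conclude that on $\mathcal{A}$,
$$|\Tr(\wh\Sigma)^{(2)}-\Tr(\Sigma)| \leq C_0\bigl(\sqrt{c}\,\fnorm{\Sigma} + c\,\opnorm{\Sigma}\bigr), \quad |\fnorm{\wh\Sigma}^{(2)}-\fnorm{\Sigma}| \leq C_0 A\sqrt{c}\,\fnorm{\Sigma}, \quad |\opnorm{\wh\Sigma}^{(2)}-\opnorm{\Sigma}| \leq C_0\sqrt{c}\,\opnorm{\Sigma}.$$
Because $\log\log(8n)\geq\log\log 16>0$ for every $n\geq 2$, choosing $c$ small enough as a function of $A$ and $\epsilon$ makes each right-hand side at most $\delta$ times the corresponding term in $r_\Sigma^\star$, for any preassigned $\delta\in(0,1/4)$.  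Consequently, on $\mathcal{A}$ the plug-in cutoff of $\psi_{\mathrm{Cov}}$ lies in $[C(1-\delta),C(1+\delta)]\cdot r_\Sigma^\star$, while the trace recentering deviates from $\Tr(\Sigma)$ by at most $\delta\, r_\Sigma^\star$.

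These two one-sided controls sandwich $\psi_{\mathrm{Cov}}$ on $\mathcal{A}$ between two oracle tests of the form~\eqref{eq:def-test-spatial-cov-known} with cutoff constants $C(1-\delta)-\delta$ and $C(1+\delta)+\delta$, both of which lie in $[C/2,2C]$ once $\delta=1/8$ and $C$ is large. The Type I analysis from the upper bound proof of Theorem~\ref{thm:minimax-spatial}, applied to the upper sandwich, then gives rejection probability at most $\epsilon/4$ under any $\theta\in\Theta_0(p,n)$, provided $C$ is chosen large enough depending on $A$ and $\epsilon$; the Type II analysis, applied to the lower sandwich, gives rejection probability at least $1-\epsilon/4$ under any $\theta\in\Theta(p,n,p,\rho)$ with $\rho^2\geq 64Cr_\Sigma^\star$, since the signal-to-cutoff ratio then exceeds~$32$. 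Combined with $\mathbb{P}_\theta(\mathcal{A}^c)\leq\epsilon/4$, this yields Type I and Type II errors each at most $\epsilon/2$, summing to $\epsilon$.

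The hard part will be the constant-bookkeeping in the second step: the estimation errors in Proposition~\ref{prop:est-cov} scale with $\sqrt{p/n}$ rather than an iterated logarithm, so without further structural assumptions they could dominate the oracle cutoff $r_\Sigma^\star$. The hypothesis $\sqrt{p}\opnorm{\Sigma}\leq A\fnorm{\Sigma}$ is precisely what is needed to tame the Frobenius deviation $\opnorm{\Sigma}\sqrt{p^2/n}$ relative to $\fnorm{\Sigma}\sqrt{\log\log(8n)}$, and the small-ratio condition $p\leq cn$ with $c=c(A,\epsilon)$ compensates for the absence of any iterated-log factor in the estimation rate; verifying that the resulting $c$ is still compatible with the $c$ appearing in Proposition~\ref{prop:est-cov} is a straightforward consistency check.
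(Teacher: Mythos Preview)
Your proposal is correct and follows essentially the same route as the paper: define a high-probability ``good'' event on which the three plug-in functionals are within constant multiplicative factors of their true values (the paper fixes the factor at $1/4$ rather than a generic $\delta$), then sandwich $\psi_{\mathrm{Cov}}$ between two oracle tests of the form~\eqref{eq:def-test-spatial-cov-known} and invoke the upper bound analysis of Theorem~\ref{thm:minimax-spatial}. Your treatment is in fact slightly more explicit than the paper's about how the hypothesis $\sqrt{p}\opnorm{\Sigma}\leq A\fnorm{\Sigma}$ converts the Frobenius deviation $\opnorm{\Sigma}\sqrt{p^2/n}$ into something comparable to $\fnorm{\Sigma}$, and about why the iterated-log lower bound $\log\log(8n)\geq\log\log 16$ absorbs the absence of any such factor in the estimation rates; the paper leaves these points implicit in the phrase ``as a direct application of Proposition~\ref{prop:est-cov}''.
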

\begin{remark}
	The conditions $p\lesssim n$ and $\sqrt{p}\opnorm{\Sigma}\lesssim \fnorm{\Sigma}$ guarantee that $\bigl|\fnorm{\wh{\Sigma}}^{(2)}-\fnorm{\Sigma}\bigr|\lesssim \fnorm{\Sigma}$ and $\bigl|\opnorm{\wh{\Sigma}}^{(2)}-\opnorm{\Sigma}\bigr|\lesssim \opnorm{\Sigma}$ with high probability, by Proposition~\ref{prop:est-cov}. Note that $\sqrt{p}\opnorm{\Sigma}\lesssim \fnorm{\Sigma}$ will be satisfied if all eigenvalues of $\Sigma$ are of the same order.  In fact, it is possible to weaken the condition $\sqrt{p}\opnorm{\Sigma}\lesssim \fnorm{\Sigma}$ using the notion of effective rank \citep{koltchinskii2017concentration}; however, this greatly complicates the analysis, and we do not pursue this here.  Alternatively, Corollary~\ref{cor:upper-cov-spatial} also holds without the $\sqrt{p}\opnorm{\Sigma}\leq A\fnorm{\Sigma}$ condition but under the stronger dimensionality restriction $p^2\leq cn$; this then allows for an arbitrary covariance matrix $\Sigma$.
\end{remark}
To better understand the influence of the covariance structure, consider, for $\gamma \in [0,1)$, the covariance matrix
$$\Sigma(\gamma):=(1-\gamma)I_p + \gamma \mathbf{1}_p\mathbf{1}_p^T,$$
which has diagonal entries $1$ and off-diagonal entries $\gamma$. The parameter $\gamma$ controls the pairwise spatial dependence; moreover, $\fnorm{\Sigma(\gamma)}^2 = (1-\gamma^2)p + p^2\gamma^2$ and $\opnorm{\Sigma(\gamma)}=1+(p-1)\gamma$. By Theorem~\ref{thm:minimax-spatial}, we have
\begin{equation}
\rho^*_{\Sigma(\gamma)}(p,n,p)^2 \asymp \sqrt{\bigl\{(1-\gamma^2)p + p^2\gamma^2\bigr\}\log\log(8n)} \vee \bigl\{1+(p-1)\gamma\bigr\}\log\log(8n). \label{eq:correlation-adds-diff}
\end{equation}
Thus the spatial dependence significantly increases the difficulty of the testing problem. In particular, if $\gamma$ is of a constant order, then the minimax rate is $p\log\log(8n)$, which is much larger than the rate~\eqref{eq:minimax-spatial-dense} for $\Sigma=I_p$.

However, the increased difficulty of testing in this example is just one part of the story. When we consider the sparsity factor $s$, the influence of the covariance structure can be the other way around. To illustrate this interesting phenomenon, we discuss a situation where $s$ is small. Since $X_t\sim N_p\bigl(\theta_t,\Sigma(\gamma)\bigr)$, we have that $Y_t\sim N_p\bigl(\Delta_t,\Sigma(\gamma)\bigr)$ for $t < n/2$, where $\Delta_t := \frac{(\theta_1+\ldots +\theta_t)-(\theta_{n-t+1}+ \ldots +\theta_n)}{\sqrt{2t}}$. Hence, the distribution of $Y_t$ can be expressed in terms of a factor model. That is,
\begin{equation}
\label{Eq:Ytj}
Y_t(j) = \Delta_t(j) + \sqrt{\gamma}W_t + \sqrt{1-\gamma}Z_{tj},
\end{equation}
where $W_{t},Z_{t1},\ldots,Z_{tp}\stackrel{\mathrm{iid}}{\sim} N(0,1)$. When there is no changepoint, we have $\Delta_t=0$, so $Y_t(j)|W_t\stackrel{\mathrm{iid}}{\sim} N(\sqrt{\gamma}W_t,1-\gamma)$ for all $j\in[p]$. When there is a changepoint between $t$ and $n-t+1$, we have $\|\Delta_t\|_0 \leq s$. In either case, then, we can estimate $\sqrt{\gamma}W_t$ by $\text{\sf Median}(Y_t)$. This motivates the new statistic
\begin{equation}
\wt{Y}_t:=\frac{Y_t - \text{\sf Median}(Y_t)\mathbf{1}_p}{\sqrt{1-\gamma}}.\label{eq:def-tilde-Y}
\end{equation}
To construct a scalar summary of $\wt{Y}_t$, we define the functions $f_a(x):=(x^2-\nu_a)\mathbbm{1}_{\{|x| \geq a\}}$ for $x \in \mathbb{R}$ and, for $C' \geq 0$, set
\begin{equation}
g_a(x) \equiv g_{a,C'}(x) :=\inf\biggl\{f_a(y): |y-x|\leq C'\sqrt{\frac{\log\log(8n)}{p}}\biggr\}. \label{eq:def-g-a}
\end{equation}
Note that $g_a(x) = f_a(x)$ when $C' = 0$.  The use of a positive $C'>0$ in (\ref{eq:def-g-a}) is to tolerate the error of $\text{\sf Median}(Y_t)$ as an estimator of $\sqrt{\gamma}W_t$.  The new testing procedure is then
\begin{equation}
\psi_{a,r,C'} := \mathbbm{1}_{\{\max_{t\in\mathcal{T}}\sum_{j=1}^pg_a(\wt{Y}_t(j))>r\}}. \label{eq:def-test-gamma}
\end{equation}
\begin{thm}\label{thm:sparse-upper-spatial}
	Assume that $\gamma\in[0,1)$ and $s\leq (p\log\log(8n))^{1/5}$.
	%{1/2-\tau}$ for some $\tau\in(0,1/2)$.  
	Then there exist universal constants $c,C' > 0$ such that if $\frac{\log\log(8n)}{p}\leq c$, then for any $\epsilon\in(0,1)$, we can find $C>0$ and $n_0 \in \mathbb{N}$, both depending only on $\epsilon$, such that the testing procedure (\ref{eq:def-test-gamma}) with $a^2=4\log\left(\frac{ep\log\log(8n)}{s^2}\right)$ and $r=C(1-\gamma)\left(s\log\left(\frac{ep\log\log(8n)}{s^2}\right)\vee\log\log(8n)\right)$ satisfies $$\sup_{\theta\in\Theta_0(p,n)}\mathbb{E}_{\theta}\psi_{a,r,C'} + \sup_{\theta\in\Theta(p,n,s,\rho)}\mathbb{E}_{\theta}(1-\psi_{a,r,C'})\leq\epsilon,$$
	for $n \geq n_0$, provided $\rho^2\geq 32C(1-\gamma)\bigl\{s\log\bigl(eps^{-2}\log\log(8n)\bigr)\vee\log\log(8n)\bigr\}$.
\end{thm}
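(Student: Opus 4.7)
The plan is to reduce the analysis of $\psi_{a,r,C'}$ to the independent-coordinate procedure of Proposition~\ref{thm:upper} applied to the ``ideal'' observations $V_t(j) := \Delta_t(j)/\sqrt{1-\gamma} + Z_{tj}$, where $\Delta_t := \bigl\{(\theta_1+\cdots+\theta_t) - (\theta_{n-t+1}+\cdots+\theta_n)\bigr\}/\sqrt{2t}$ and the $Z_{tj}$ are the standard Gaussian errors appearing in the factor decomposition~\eqref{Eq:Ytj}. Writing
\[
\wt{Y}_t(j) = V_t(j) - M_t, \qquad M_t := \bigl\{\mathrm{Median}(Y_t) - \sqrt{\gamma}\,W_t\bigr\}\big/\sqrt{1-\gamma},
\]
if I can guarantee that $\max_{t \in \mathcal{T}}|M_t| \leq C'\sqrt{\log\log(8n)/p}$ on a high-probability event, then $V_t(j)$ lies in the infimum ball of~\eqref{eq:def-g-a} centred at $\wt Y_t(j)$, yielding the deterministic sandwich $g_a(\wt{Y}_t(j)) \leq f_a(V_t(j))$. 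The Type~I and Type~II analyses of Proposition~\ref{thm:upper}, applied to the independent collection $(V_t(j))$, then do most of the work.

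The first step is therefore the median deviation bound. Under $\theta\in\Theta_0(p,n)$ we have $M_t = \mathrm{Median}(Z_t)$, which concentrates at rate $1/\sqrt{p}$ by a standard Gaussian order-statistic argument. Under $\theta\in\Theta(p,n,s,\rho)$, the shift in the median caused by the (at most $s$) nonzero entries of $\Delta_t$ is handled by observing that the perturbed median is sandwiched between the $(p/2-s)$-th and $(p/2+s)$-th order statistics of the pure-noise vector $(\sqrt{1-\gamma}\,Z_{tj})_j$; Gaussian quantile spacing then gives $|M_t| \lesssim s/p + 1/\sqrt{p}$. Combined with the hypotheses $s \leq (p\log\log(8n))^{1/5}$ and $\log\log(8n)/p \leq c$, this is at most $C'\sqrt{\log\log(8n)/p}$ for a suitable universal $C'$ and all $n \geq n_0$, and a union bound over $|\mathcal{T}| \lesssim \log n$ is absorbed into $n_0$.

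For Type~I, the sandwich immediately gives $\max_{t\in\mathcal{T}}\sum_j g_a(\wt{Y}_t(j)) \leq \max_{t\in\mathcal{T}}\sum_j f_a(Z_{tj})$ on the good event, which is the null statistic in Proposition~\ref{thm:upper} and so does not exceed $r$ with probability at least $1-\epsilon/2$ for large $C$ (the extra factor $1-\gamma \leq 1$ in $r$ only enlarges the acceptance region). For Type~II, pick $\tilde t \in \mathcal{T}$ with $t_0/2 < \tilde t \leq t_0$ so that $\|\Delta_{\tilde t}/\sqrt{1-\gamma}\|^2 \gtrsim \rho^2/(1-\gamma)$, which is comfortably above $r/(1-\gamma)$. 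The alternative-hypothesis analysis of Proposition~\ref{thm:upper} applied to $(V_{\tilde t}(j))$ then shows that $\sum_j f_a(V_{\tilde t}(j))$ lies well above $2r/(1-\gamma)$ with high probability, and it remains to transfer this to a lower bound on $\sum_j g_a(\wt Y_{\tilde t}(j))$. I write $g_a(\wt Y_{\tilde t}(j)) = f_a(V_{\tilde t}(j) + \eta_j)$ with $|\eta_j| \leq 2C'\sqrt{\log\log(8n)/p}$ and bound the coordinate-wise deficit in two cases: on signal coordinates, both arguments lie in the quadratic regime of $f_a$ and the deficit is of order $|V_{\tilde t}(j)|\sqrt{\log\log(8n)/p}$, controlled in total by Cauchy--Schwarz against the sparsity $s$; on non-signal coordinates, the deficit is at most a constant multiple of $\nu_a \lesssim \log(ep\log\log(8n)/s^2)$ per index, and the number of contributing indices is controlled by binomial concentration on $\#\{j : |Z_{\tilde t j}| \geq a\}$.

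The main obstacle is precisely this final transfer step: the cumulative deficit from the infimum in the definition of $g_a$ must be shown to be of strictly smaller order than the gap between the idealized signal strength and the threshold $r$, and this is where the stringent sparsity restriction $s \leq (p\log\log(8n))^{1/5}$ together with the dimensional condition $\log\log(8n)/p \leq c$ are essential.
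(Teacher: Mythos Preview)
Your reduction to the oracle $V_t$ via median concentration and the Type~I sandwich $g_a(\widetilde Y_t(j))\leq f_a(V_t(j))$ are exactly the paper's moves (its $\bar Y_t$ is your $V_t$; it invokes Lemma~\ref{lem:median} rather than an order-statistic sandwich, but the content is the same). The Type~II transfer is where the routes diverge.

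The paper does \emph{not} try to control the random deficit $\sum_j[f_a(V_{\tilde t}(j))-g_a(\widetilde Y_{\tilde t}(j))]$. Instead it introduces the deterministic lower envelope $h_a(x):=\inf\{f_a(y):|x-y|\leq a/10\}$, uses the hypothesis $\log\log(8n)/p\leq c$ to force $2C'\sqrt{\log\log(8n)/p}\leq a/10$, obtains the pointwise lower sandwich $g_a(\widetilde Y_t(j))\geq h_a(V_t(j))$ on the good event, and runs a single Chebychev argument on $\sum_j h_a(V_{\tilde t}(j))$ after computing $\mathbb{E}h_a(V_{\tilde t}(j))$ and $\Var h_a(V_{\tilde t}(j))$ coordinate-wise in three regimes, paralleling Lemmas~\ref{lem:trunc-chi-square-mean}--\ref{lem:trunc-chi-square-var}. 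The point of taking tolerance on the \emph{signal} scale $a/10$ (rather than the tiny median error) is that the jump of $f_a$ at $|x|=a$ is swallowed into the fixed function $h_a$ once and for all; no separate boundary bookkeeping or additional high-probability event is needed. Your direct-deficit approach can be made to work, but the sketch is incomplete: the claim that on signal coordinates ``both arguments lie in the quadratic regime of $f_a$'' fails whenever $|\Delta_{\tilde t}(j)|/\sqrt{1-\gamma}$ is below or near $a$, and at such coordinates the per-index deficit is governed by the jump $\nu_a-a^2$ rather than by your linearisation $|V_{\tilde t}(j)|\sqrt{\log\log(8n)/p}$; this adds a term of order $s$ not captured by the Cauchy--Schwarz bound and which must be absorbed separately by enlarging $C$. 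Likewise, on non-signal coordinates you must also count the shell $a-2C'\sqrt{\log\log(8n)/p}\leq|Z_{\tilde t j}|<a$, where $f_a(Z_{\tilde t j})=0$ but the infimum defining $g_a$ can dip to $a^2-\nu_a<0$, not only $\{|Z_{\tilde t j}|\geq a\}$.

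One further slip: your Type~I parenthetical has the direction reversed. The factor $(1-\gamma)\leq 1$ makes the threshold $r$ \emph{smaller}, which \emph{shrinks} the acceptance region $\{\max_t\sum_j g_a(\widetilde Y_t(j))\leq r\}$ and therefore makes Type~I control harder, not easier; the null upper bound $\sum_j f_a(Z_{tj})$ does not depend on $\gamma$ at all.
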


Surprisingly, in the sparse regime, the spatial correlation helps changepoint detection, and the required signal strength for testing consistency decreases as $\gamma$ increases. This is in stark contrast to (\ref{eq:correlation-adds-diff}) for the same covariance structure when $s=p$.

\begin{remark}
	The testing procedure considered in Theorem \ref{thm:sparse-upper-spatial} can be easily made adaptive to the unknown $\gamma$ by taking advantage of Proposition \ref{prop:est-cov}. Since $\Tr(\Sigma(\gamma))=p+(p^2-p)\gamma$, when $p \geq 2$ the estimator $\wh{\gamma}:=\frac{\Tr(\wh{\Sigma})^{(2)}-p}{p^2-p}$ satisfies
	$$|\wh{\gamma}-\gamma|\lesssim \frac{\sqrt{(1-\gamma^2)p+p^2\gamma^2}}{p^{3/2}\sqrt{n}}+\frac{1+(p-1)\gamma}{pn},$$
	with probability at least $1-2e^{-p}$. Then, the procedure with $\gamma$ replaced by $\wh{\gamma}$ enjoys the same guarantee of Theorem \ref{thm:sparse-upper-spatial} under mild extra conditions.
\end{remark}

The next theorem shows that the rate achieved by Theorem \ref{thm:sparse-upper-spatial} is minimax optimal.  
\begin{thm}\label{thm:sparse-lower-spatial}
	Assume that $\gamma\in[0,1)$ and $s\leq\sqrt{p\log\log n}$. Then
	\begin{equation}
	\label{Eq:rstargamma}
	\rho_{\Sigma(\gamma)}^*(p,n,s)^2\gtrsim (1-\gamma)\biggl\{s\log\left(\frac{ep\log\log(8n)}{s^2}\right) \vee \log\log(8n)\biggr\}.
	\end{equation}
\end{thm}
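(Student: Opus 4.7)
The plan is to run a chi-squared reduction that transfers the uncorrelated lower bound of Proposition~\ref{thm:lower} to $\Sigma(\gamma)$, with every signal amplitude contracted by $\sqrt{1-\gamma}$. The engine is the Sherman--Morrison expansion
$$\Sigma(\gamma)^{-1}=\frac{1}{1-\gamma}\left(I_p-\frac{\gamma}{1+(p-1)\gamma}\mathbf{1}_p\mathbf{1}_p^T\right),$$
which substituted into the standard Gaussian chi-squared identity yields
$$\iprod{\theta^{(1)}}{\theta^{(2)}}_{(I_n\otimes\Sigma(\gamma))^{-1}}=\frac{\iprod{\theta^{(1)}}{\theta^{(2)}}}{1-\gamma}-\frac{\gamma}{(1-\gamma)(1+(p-1)\gamma)}\sum_{t=1}^n(\mathbf{1}_p^T\theta^{(1)}_t)(\mathbf{1}_p^T\theta^{(2)}_t)$$
for two independent copies $\theta^{(1)},\theta^{(2)}$ of the signal drawn from the prior. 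Scaling every signal in the uncorrelated lower-bound prior by $\sqrt{1-\gamma}$ converts the leading term into exactly the chi-squared integrand at $\gamma=0$, and the signal strength becomes $\rho^2=(1-\gamma)\rho_0^2\asymp(1-\gamma)r^*(p,n,s)$, matching~\eqref{Eq:rstargamma}.

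To neutralise the cross-term, I would construct a \emph{balanced} version of the sparse lower-bound prior from the proof of Proposition~\ref{thm:lower}: instead of i.i.d.\ Rademacher signs on a random $s$-subset, place exactly $\lfloor s/2\rfloor$ copies of $+\beta$ and $\lceil s/2\rceil$ copies of $-\beta$ on a uniformly random $s$-subset of $[p]$, with a harmless parity adjustment when $s$ is odd (which at most reduces the effective sparsity to $s-1$ without changing the rate). All other randomness in the original prior, most importantly the uniform choice of dyadic changepoint scale $t_0=2^k$ with $k\sim\text{Unif}\{0,1,\ldots,\lfloor\log_2(n/2)\rfloor\}$ that produces the iterated-logarithm factor, is preserved verbatim. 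Under balance, every realised column satisfies $\mathbf{1}_p^T\theta_t=0$, so the cross-term vanishes identically, and the chi-squared divergence of the Gaussian mixture versus the null coincides with its value from the uncorrelated analysis and is therefore $O(1)$ at $\rho_0^2\asymp r^*(p,n,s)$. A standard two-point reduction (as in the proof of Proposition~\ref{thm:lower}) then delivers~\eqref{Eq:rstargamma} for $s\geq 2$; the $\log\log(8n)$ part is obtained by the same device applied to a balanced 2-sparse construction (which remains $s$-sparse for any $s\geq 2$) at the appropriate amplitude.

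The main obstacle is the boundary case $s=1$, where no non-trivial balanced 1-sparse vector exists. Here I would work directly with a 1-sparse prior with random coordinate $J\sim\text{Unif}([p])$, random sign $\epsilon\sim\text{Unif}(\{-1,+1\})$, and amplitude $\beta\sqrt{1-\gamma}$. On $\{J^{(1)}=J^{(2)}\}$, the quadratic form $(\theta^{(1)}_t)^T\Sigma(\gamma)^{-1}\theta^{(2)}_t$ is bounded by $(1-\gamma)^{-1}\beta^2(1-\gamma)$ times a Rademacher product, exactly matching the $\gamma=0$ calculation; off the diagonal its magnitude is of order $\gamma\{(1-\gamma)(1+(p-1)\gamma)\}^{-1}\leq\{(p-1)(1-\gamma)\}^{-1}$, and because it is multiplied by $\epsilon^{(1)}\epsilon^{(2)}$ with independent signs, its exponential moment contributes only a multiplicative $O(1)$ after averaging. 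This preserves the chi-squared bound of the uncorrelated 1-sparse construction and yields the required lower bound $(1-\gamma)\{\log(ep\log\log(8n))\vee\log\log(8n)\}$, completing the proof of~\eqref{Eq:rstargamma}.
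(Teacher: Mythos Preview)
Your approach is workable but takes an unnecessarily circuitous route compared to the paper, and one step is justified imprecisely.

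The paper's argument is much shorter because it exploits the \emph{sign} of the rank-one correction rather than trying to annihilate it. In the sparse regime of Proposition~\ref{thm:lower} the prior already uses all-positive entries (you misremember it as Rademacher): $\theta_{j\ell}=\beta/\sqrt{2^k}$ on $S\times[2^k]$. For such priors both $\mathbf{1}_p^T\theta_t^{(1)}$ and $\mathbf{1}_p^T\theta_t^{(2)}$ are nonnegative, so the $-\kappa_2(\gamma)\mathbf{1}_p\mathbf{1}_p^T$ term in $\Sigma(\gamma)^{-1}$ contributes nonpositively and one simply bounds
\[
\iprod{\theta_1}{\theta_2}_{\Sigma(\gamma)^{-1}}\leq \kappa_1(\gamma)\iprod{\theta_1}{\theta_2}=\frac{\beta^2}{(1-\gamma)2^{|l-k|/2}}|S\cap T|.
\]
This is precisely the formula~\eqref{eq:mgfb3step} with $\beta^2$ replaced by $\beta^2/(1-\gamma)$, so the entire Proposition~\ref{thm:lower} calculation is reused verbatim; the $(1-\gamma)$ factor falls out immediately and no separate treatment of $s=1$ is needed. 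The $\log\log(8n)$ part is handled by the same device (deterministic support $[s]$ with positive signs), again dropping the negative cross-term.

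Your balanced-sign construction does kill the cross-term exactly, but then your claim that the chi-squared divergence ``coincides with its value from the uncorrelated analysis'' is not accurate: Proposition~\ref{thm:lower} never analyses a balanced prior, so you would owe a fresh bound on $\mathbb{E}\exp\bigl(\lambda\sum_{j\in S\cap T}u_jv_j\bigr)$ with balanced $u,v$. That can be done (for instance via the trivial bound $\sum_{j\in S\cap T}u_jv_j\leq |S\cap T|$, which lands you back on~\eqref{eq:no-where}), but it is extra work you have not written, and it is strictly more effort than the paper's one-line inequality. Your separate $s=1$ argument is correct but again superfluous under the paper's sign-monotonicity trick.
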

To conclude this section, we remark that the dependence on $\gamma$ of the minimax testing rate arises in part due to our choice of measuring departures from the null hypothesis in terms of a rescaled squared Euclidean distance.  Other natural choices, such as a rescaled squared supremum norm distance \citep{jirak2015uniform} may well lead to different phenomena.

\section{Temporal dependence}\label{SECTEMP}

In this section, we consider the situation where $X_1,\ldots,X_n$ form a multivariate time series. To be specific, in our model $X_t=\theta_t+E_t$ for $t\in[n]$, we now assume that the random vectors $E_1,\ldots,E_n$ are jointly Gaussian but not necessarily independent. The covariance structure of the error vectors can be parametrized by a covariance matrix $\Sigma\in\mathbb{R}^{pn\times pn}$, and for $B \geq 0$, we write $\Sigma \in \mathcal{C}(p,n,B)$ if: 
\begin{enumerate}
	\item $\Cov(E_t)=I_p$ for all $t\in [n]$;
	\item  $\sum_{s\in[n]\setminus\{t\}}\opnorm{\Cov(E_s,E_t)}\leq B$ for all $t\in[n]$.
\end{enumerate}
Thus the data generating process of $X$ is completely determined by its mean matrix $\theta$ and covariance matrix $\Sigma\in\mathcal{C}(p,n,B)$, and we use the notion $\mathbb{P}_{\theta,\Sigma}$ and $\mathbb{E}_{\theta,\Sigma}$ for the corresponding probability and expectation.  The case $B=0$ reduces to the situation of observations at different time points being independent. Time series dependence in high-dimensional changepoint problems has also been considered by \cite{wang2018high}; their condition $\opnorm{\sum_{s=1}^n\Cov(E_s,E_t)}\leq B$ for all $t\in[n]$ is only slightly different from ours.  We also mention here the work of \cite{horvath2012change}, who study the asymptotic distributions of changepoint test statistics with dependent data, in a regime in which $p/\sqrt{n} \rightarrow 0$. 

We focus on the case $s=p$ and do not consider the effect of sparsity.  The minimax testing error is defined by
$$\mathcal{R}(\rho) := \inf_{\psi \in \Psi}\Biggl\{\sup_{\substack{\theta\in\Theta_0(p,n)\\\Sigma\in\mathcal{C}(p,n,B)}}\mathbb{E}_{\theta,\Sigma}\psi + \sup_{\substack{\theta\in\Theta(p,n,p,\rho)\\\Sigma\in\mathcal{C}(p,n,B)}}\mathbb{E}_{\theta,\Sigma}(1-\psi)\Biggr\}.$$
We also define the corresponding minimax rate of detection boundary $\rho_{\mathrm{Temp}}^*(p,n,p,B)$ similarly to Definition~\ref{def:minimax-seperation}.
The testing procedure
\begin{equation}
\psi_{\mathrm{Temp}}: =\mathbbm{1}_{\{\max_{t\in\mathcal{T}}\|Y_t\|^2-p>r\}}\label{eq:def-test-temporal}
\end{equation}
has the following property:
\begin{thm}\label{thm:upper-temporal}
	For any $\epsilon\in(0,1)$, there exists $C>0$, depending only on $\epsilon$, such that the test~\eqref{eq:def-test-temporal} with $r=C\Bigl\{Bp+(1+B)\bigl(\sqrt{p\log\log(8n)}+\log\log(8n)\bigr)\Bigr\}$ satisfies
	$$\sup_{\substack{\theta\in\Theta_0(p,n)\\\Sigma\in\mathcal{C}(p,n,B)}}\mathbb{E}_{\theta,\Sigma}\psi_{\mathrm{Temp}} + \sup_{\substack{\theta\in\Theta(p,n,p,\rho)\\\Sigma\in\mathcal{C}(p,n,B)}}\mathbb{E}_{\theta,\Sigma}(1-\psi_{\mathrm{Temp}})\leq\epsilon,$$
	as long as $\rho^2\geq 32C\Bigl\{Bp+(1+B)\bigl(\sqrt{p\log\log(8n)}+\log\log(8n)\bigr)\Bigr\}$.
\end{thm}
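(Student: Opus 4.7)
The plan is to reduce the theorem to a Gaussian concentration bound for $\|Y_t\|^2$ under both $H_0$ and $H_1$, tracking the effect of the temporal covariance through the parameter $B$. Under $H_0$, the statistic $Y_t$ is a centered Gaussian vector with covariance
\[
\Sigma_t := \Cov(Y_t) = \frac{1}{2t}\sum_{i,j=1}^t \bigl\{\Cov(E_i,E_j) + \Cov(E_{n-i+1},E_{n-j+1}) - \Cov(E_i,E_{n-j+1}) - \Cov(E_{n-i+1},E_j)\bigr\}.
\]
Using $\Cov(E_s,E_s) = I_p$, the elementary bound $|\Tr(M)| \leq p\opnorm{M}$, and the row-sum assumption $\sum_{s\neq t'}\opnorm{\Cov(E_s,E_{t'})} \leq B$, I would show that
\[
|\Tr(\Sigma_t) - p| \leq 2Bp, \qquad \opnorm{\Sigma_t} \leq 1+2B, \qquad \fnorm{\Sigma_t} \leq (1+2B)\sqrt{p},
\]
the Frobenius bound following from the PSD inequality $\fnorm{M}^2 \leq \opnorm{M}\Tr(M)$ applied to $\Sigma_t$. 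The Hanson--Wright inequality for Gaussian quadratic forms then gives, for each fixed $t \in \mathcal{T}$, that $\bigl|\|Y_t\|^2 - \Tr(\Sigma_t)\bigr| \lesssim (1+B)(\sqrt{pu} + u)$ with probability at least $1-2e^{-u}$. Choosing $u \asymp \log\log(8n) + \log(1/\epsilon)$ and taking a union bound over the $|\mathcal{T}| \leq 1+\lfloor\log_2(n/2)\rfloor$ values of $t$ produces a uniform bound $\max_{t \in \mathcal{T}}\|Y_t\|^2 - p \leq r$ of the required order, on an event of probability at least $1-\epsilon/2$.

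For the Type~II analysis, fix $\theta \in \Theta^{(t_0)}(p,n,p,\rho)$; by the symmetry $t \mapsto n+1-t$, which leaves the test statistic invariant, I may assume $t_0 \leq n/2$. Set $t^* := \min(t_0, n-t_0)$ and $\tilde t := 2^{\lfloor \log_2 t^* \rfloor} \in \mathcal{T}$, so that $t^*/2 \leq \tilde t \leq t^*$ and the first $\tilde t$ columns of $\theta$ equal $\mu_1$ while the last $\tilde t$ equal $\mu_2$. Then $m := \mathbb{E}_{\theta,\Sigma}(Y_{\tilde t}) = \sqrt{\tilde t/2}\,(\mu_1 - \mu_2)$, and since $\min(t_0,n-t_0) \geq t_0(n-t_0)/n$, one obtains $\|m\|^2 \geq \rho^2/4$. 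Writing $Y_{\tilde t} = m + Z$ with $Z \sim N_p(0, \Sigma_{\tilde t})$, expand $\|Y_{\tilde t}\|^2 = \|m\|^2 + 2\langle m, Z\rangle + \|Z\|^2$. The null analysis controls $\|Z\|^2$, while the cross term is $N(0, m^T \Sigma_{\tilde t} m)$ with variance at most $(1+2B)\|m\|^2$, so by AM--GM it satisfies $|2\langle m, Z\rangle| \leq \|m\|^2/4 + C'(1+B)\log\log(8n)$ on an event of probability $\geq 1-\epsilon/4$. Combining these estimates yields $\|Y_{\tilde t}\|^2 - p \geq \tfrac{3}{4}\|m\|^2 - r \geq \tfrac{3}{16}\rho^2 - r$, and the hypothesis $\rho^2 \geq 32Cr^*(p,n,p)$ makes this exceed $r$ so that the test rejects.

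I expect the main obstacle to be the covariance algebra in the first paragraph: the scalar row-sum condition $\sum_{s\neq t'}\opnorm{\Cov(E_s,E_{t'})} \leq B$ must be converted into simultaneous sharp bounds on the three matrix functionals $\Tr(\Sigma_t)$, $\opnorm{\Sigma_t}$, $\fnorm{\Sigma_t}$ that feed into Hanson--Wright, and one must carefully separate the deterministic bias $|\Tr(\Sigma_t) - p| \leq 2Bp$ (which scales with $p$ but is unaffected by the union bound over $\mathcal{T}$) from the Gaussian fluctuation (which picks up the $\log\log(8n)$ factor from the union bound). Getting exactly the $Bp$, $(1+B)\sqrt{p\log\log(8n)}$, and $(1+B)\log\log(8n)$ contributions to appear additively in the threshold $r$ requires all three of these bounds to be tight in the right sense. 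Once this is in place, the alternative analysis and the final calibration are essentially a repeat of the non-temporal proof of Proposition~\ref{thm:upper}.
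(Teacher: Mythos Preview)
Your proposal is correct and tracks the paper's proof closely, with two minor methodological variations worth noting.

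For the Type~I analysis, the paper takes a slightly slicker route: rather than bounding $\Tr(\Sigma_t)$, $\opnorm{\Sigma_t}$, $\fnorm{\Sigma_t}$ separately and invoking Hanson--Wright, it observes that $2t\,\|\Gamma_t^{-1/2}Y_t\|^2 \sim \chi_p^2$ exactly under $H_0$ (where $\Gamma_t = 2t\,\Sigma_t$), and then uses the crude domination $\|Y_t\|^2 \leq \opnorm{\Sigma_t}\cdot 2t\,\|\Gamma_t^{-1/2}Y_t\|^2 \leq (1+B)\chi_p^2$. This reduces everything to the standard $\chi_p^2$ tail, so the $Bp$, $(1+B)\sqrt{p\log\log(8n)}$ and $(1+B)\log\log(8n)$ pieces fall out of the single algebraic identity $(1+B)(p + 2\sqrt{px} + 2x) = p + Bp + (1+B)(2\sqrt{px}+2x)$. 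Your approach via the three matrix functionals is equally valid and arguably more transparent about where each term comes from; the paper's whitening trick just avoids computing $\fnorm{\Sigma_t}$ at all.

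For the Type~II analysis, the paper does not decompose $\|Y_{\tilde t}\|^2$ into signal, cross term and noise; instead it computes $\mathbb{E}_{\theta,\Sigma}\|Y_{\tilde t}\|^2 - p \geq \delta^2 - Bp \geq \delta^2/2$ and $\Var(\|Y_{\tilde t}\|^2) \leq 4p(1+B)^2 + 8\delta^2(1+B)$, then applies Chebychev. Your high-probability argument (Gaussian tail on the cross term plus the lower tail of the weighted chi-squared for $\|Z\|^2$) gives a strictly stronger exponential bound, though Chebychev already suffices for the statement. Either way the endgame is the same: the signal $\|m\|^2 \geq \rho^2/4$ dominates both $r$ and the fluctuations once $\rho^2 \geq 32r$.
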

Our final result provides the complementary lower bound.
\begin{thm}\label{thm:lower-temporal}
	Assume that $B \leq D\sqrt{n/p}$ for some $D > 0$, and let
	\begin{equation}
	\label{eq:minimax-B}
	\rho_{\mathrm{Temp}}^{*2} \equiv \rho_{\mathrm{Temp}}^*(p,n,p,B)^2 := Bp + (1+B)\bigl\{\sqrt{p\log\log(8n)}\vee \log\log(8n)\bigr\}.
	\end{equation}
	Then given $\epsilon > 0$, there exist $c_{\epsilon,D} > 0$, depending only on $\epsilon$ and $D$, and $p_\epsilon \in \mathbb{N}$, depending only on $\epsilon$, such that $\mathcal{R}(c\rho_{\mathrm{Temp}}^*) \geq 1-\epsilon$ whenever $c \in (0,c_{\epsilon,D})$ and $p \geq p_\epsilon$. 
	%$$\rho^*(p,n,p,B)^2\gtrsim Bp + (1+B)\bigl\{\sqrt{p\log\log(8n)}\vee \log\log(8n)\bigr\}.$$
\end{thm}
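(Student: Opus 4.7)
The plan is to establish the lower bound by splitting the target rate $\rho_{\mathrm{Temp}}^{*2}=Bp+(1+B)\{\sqrt{p\log\log(8n)}\vee\log\log(8n)\}$ into three pieces and handling each via a Le Cam-style chi-square reduction. It suffices to show, up to universal constants, that $\rho_{\mathrm{Temp}}^{*2}\gtrsim\sqrt{p\log\log(8n)}\vee\log\log(8n)$, that $\rho_{\mathrm{Temp}}^{*2}\gtrsim B\{\sqrt{p\log\log(8n)}\vee\log\log(8n)\}$, and that $\rho_{\mathrm{Temp}}^{*2}\gtrsim Bp$. For each piece, I would construct a null pair $(\theta_0,\Sigma_0)$ with $\theta_0\in\Theta_0(p,n)$ and $\Sigma_0\in\mathcal{C}(p,n,B)$, together with a prior $\pi$ supported on pairs $(\theta,\Sigma)\in\Theta(p,n,p,\rho)\times\mathcal{C}(p,n,B)$, and bound $\chi^2\bigl(\int\mathbb{P}_{\theta,\Sigma}\,d\pi(\theta,\Sigma)\,\bigm\|\,\mathbb{P}_{\theta_0,\Sigma_0}\bigr)$ by a universal constant; the inequality $\mathcal{R}(\rho)\geq 1-\tfrac{1}{2}\sqrt{\chi^2}$ then yields $\mathcal{R}(\rho)\geq 1-\epsilon$ in the required range.

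For the first piece, $\Sigma_0=I_{np}$ belongs to $\mathcal{C}(p,n,B)$ for every $B\geq 0$, so Proposition~\ref{thm:lower} with $s=p$ delivers the bound verbatim. For the $(1+B)$-factor piece, I would introduce temporal correlation via a latent-factor covariance $E_t=\xi_t W+Z_t$, with $W\sim N(0,\sigma^2 I_p)$, $Z_t\stackrel{\mathrm{iid}}{\sim}N(0,(1-\sigma^2)I_p)$, $\sigma^2\asymp B/(n-1)$, and deterministic signs $\xi_t\in\{\pm 1\}$ that flip at a candidate changepoint location $\tilde t$ used in the prior $\pi$. A direct calculation gives $\Cov(Y_{\tilde t})=(1+B)I_p$ under this $\Sigma_0$, so the chi-square analysis from the independent case carries over with the per-coordinate noise scale $1$ replaced by $1+B$, which inflates the detection threshold by the same factor. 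For the $Bp$ piece, I would pair the above latent-factor $\Sigma$ with a joint prior on the change direction and the candidate location $\tilde t$, exploiting the fact that different choices of $\Sigma\in\mathcal{C}(p,n,B)$ can shift $\Tr(\Cov(Y_t))$ by $\Theta(Bp)$, so that signals of size $\rho^2\lesssim Bp$ are absorbed into this adversarial ambiguity.

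The main obstacle is the $Bp$ piece. Unlike the other two it does not reduce to a rescaled version of the $B=0$ lower bound: it originates from the freedom of the adversary to shift the mean of the canonical test statistic $\|Y_t\|^2-p$ by up to $\Theta(Bp)$ through the unknown covariance, and this ambiguity must be propagated carefully through the Gaussian chi-square formula. The technical challenge is to design a joint prior on $(\theta,\Sigma)$ under which the mixture marginal on $X$ is close in total variation to some admissible null despite the signal strength being of order $Bp$; this requires pairing the prior on the change direction with the prior on the cross-covariances so that both the first and second order contributions to $\|Y_t\|^2$ cancel. The hypothesis $B\leq D\sqrt{n/p}$ enters here to guarantee that the resulting covariances $\Sigma$ remain positive-definite, that the chi-square expansion converges, and that the final bound is uniform in $p\geq p_\epsilon$. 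Completing this chi-square calculation is where the bulk of the work lies.
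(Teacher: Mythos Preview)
Your plan for the first piece (the $B=0$ rate $\sqrt{p\log\log(8n)}\vee\log\log(8n)$) is fine and matches the paper.

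The latent-factor construction for the $(1+B)$ piece does not work as stated. With $E_t=\xi_tW+Z_t$, $W\sim N_p(0,\sigma^2 I_p)$, $\sigma^2\asymp B/(n-1)$, and signs flipping at $\tilde t$, one computes
\[
\Cov(Y_{\tilde t})=\bigl[(2\tilde t-1)\sigma^2+1\bigr]I_p,
\]
which equals $(1+B)I_p$ only when $\tilde t\approx n/2$; for small $\tilde t$ on the geometric grid the inflation is $1+O(\tilde tB/n)\approx 1$. Worse, membership in $\mathcal{C}(p,n,B)$ forces $(n-1)\sigma^2\leq B$, so $(2\tilde t-1)\sigma^2\leq B$ can only saturate at $\tilde t=n/2$. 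Hence the chi-square argument from the independent case does \emph{not} carry over with noise scale $1+B$ uniformly over the grid, and you lose either the $\log\log n$ factor or the $(1+B)$ factor. The paper handles this piece quite differently: it takes a \emph{block} covariance $\bar\Sigma$ in which observations within each consecutive block of size $\lceil B\rceil$ are perfectly correlated ($\Cov(E_s,E_t)=I_p$ for $s,t$ in the same block). Under $\bar\Sigma$ the block averages are sufficient, so the problem is \emph{exactly} the independent problem with $n/\lceil B\rceil$ time points and signal $\rho^2/\lceil B\rceil$; Proposition~\ref{thm:lower} then yields $\rho^2\gtrsim\lceil B\rceil\{\sqrt{p\log\log(8n/\lceil B\rceil)}\vee\log\log(8n/\lceil B\rceil)\}$, and $B\leq D\sqrt{n/p}$ lets one replace $n/\lceil B\rceil$ by $n$.

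For the $Bp$ piece you correctly identify the mechanism (adversarial ambiguity in $\Tr(\Cov(Y_t))$), but your description is not a proof plan, and the chi-square route you outline is not the one that works. The paper's key idea is to \emph{avoid} a chi-square calculation here by exploiting the identity that a Gaussian location mixture with Gaussian prior is again Gaussian. Concretely: with $\mu\sim N_p(0,aI_p)$ and $\theta_t=\mu\mathbbm{1}_{\{t\leq n/2\}}$, the marginal law of $X$ under $(\theta,I_{pn})$ is exactly $\mathbb{P}_{0,\Sigma_1}$ for an explicit $\Sigma_1$ differing from an admissible null covariance $\Sigma_0\in\mathcal{C}(p,n,B)$ only in the diagonal blocks on the first half ($(1+a)I_p$ versus $I_p$). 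One then bounds $\TV(\mathbb{P}_{0,\Sigma_0},\mathbb{P}_{0,\Sigma_1})\lesssim\fnorm{\Sigma_0-\Sigma_1}\asymp\sqrt{npa^2}$ directly via the sharp Gaussian TV bound of Devroye--Mehrabian--Reddad, and pays a separate $O(e^{-p/16})$ (this is where $p\geq p_\epsilon$ enters) to truncate the prior onto $\{\|\mu\|^2\geq pa/2\}\subseteq\Theta(p,n,p,\rho)$. Choosing $a\asymp_{\epsilon,D}B/n$ makes $\sqrt{npa^2}$ small and keeps $\Sigma_0\in\mathcal{C}(p,n,B)$, yielding $\rho^2=npa/8\asymp Bp$. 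Your proposed joint prior on $(\theta,\Sigma)$ with ``first and second order cancellations'' does not obviously produce this reduction, and the Ingster--Suslina chi-square formula is not available because the null covariance is not the identity.
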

Together, Theorems~\ref{thm:upper-temporal} and~\ref{thm:lower-temporal} reveal the rate of the minimax detection boundary when $B \lesssim \sqrt{n/p}$. %that the minimax detection boundary has the rate
%\begin{equation}
%\rho^*(p,n,p,B)^2\asymp Bp + (1+B)\bigl\{\sqrt{p\log\log(8n)}\vee \log\log(8n)\bigr\}, \label{eq:minimax-B}
%\end{equation}
%as long as $B\leq \sqrt{n/p}$.
Observe that when $B=0$, the rate (\ref{eq:minimax-B}) becomes $\sqrt{p\log\log(8n)}\vee\log\log(8n)$, which matches (\ref{eq:minimax-rate}) when $s=p$. When $B>0$, the rate (\ref{eq:minimax-B}) has an extra multiplicative factor $1+B$ and an extra additive factor $Bp$, which are present for different reasons. Due to the dependence of the time series, one can think of $n/(1+B)$ and $\rho^2/(1+B)$ as being the effective sample size and signal strength respectively, instead of $n$ and $\rho^2$ for the independent case, and this leads to the presence of the multiplicative factor $1+B$. On the other hand, the additive term $Bp$ arises from the fact that $\mathbb{E}_{\theta,\Sigma}\|Y_t\|^2-p$ under the null hypothesis is not known completely due to the unknown covariance structure $\Sigma\in\mathcal{C}(p,n,B)$.  In fact, in the construction of the lower bound, the relevant zero mean Gaussian distribution with unknown covariance can be approximated by a location mixture of Gaussians with known identity covariance.  This allows us to relate the difficulties of the two problems.  When $B=0$, the class $\mathcal{C}(p,n,B)$ becomes a singleton, and we know that $\mathbb{E}_{\theta,\Sigma}\|Y_t\|^2=p$ under the null, so this additional term disappears.

\section{Proofs}\label{sec:proofs}

\subsection{Proofs of results in Section \ref{sec:main}}

%In this section, we present proofs of Proposition~\ref{thm:upper}, Proposition~\ref{thm:lower}, Theorem \ref{thm:adaptive}, Theorem \ref{thm:asymp-dense} and Theorem \ref{thm:asymp-sparse}.

\begin{proof}[Proof of Proposition~\ref{thm:upper}]
	In this proof, we seek to control Type I and Type II errors using tail probability bounds for chi-squared random variables and a version with truncated summands; these are given as Lemmas~\ref{lem:chi-square-tail} and~\ref{lem:trunc-chi-square-tail} respectively.  Fixing $\epsilon \in (0,1)$, set $C=C(\epsilon) := 50C_1/\epsilon$, where the universal constant $C_1 \geq 1$ is taken from Lemma~\ref{lem:trunc-chi-square-var}.  We first consider the case where $s\geq\sqrt{p\log\log(8n)}$.  Then $a=0$, so that $A_{t,a}=\sum_{j=1}^pY_t(j)^2-p$. Therefore, for any $\theta\in\Theta_0(p,n)$, we have $A_{t,a}\sim \chi_p^2-p$.  Then, by a union bound and Lemma~\ref{lem:chi-square-tail}, we obtain that with $x := \frac{C}{9}\log\log(8n)$,
	\begin{align}
\nonumber	\mathbb{E}_\theta\psi = \mathbb{P}_\theta\biggl(\max_{t\in\mathcal{T}}A_{t,0}>C\sqrt{p \log \log (8n)}\biggr)&\leq \mathbb{P}_{\theta}\Bigl(\max_{t\in\mathcal{T}}A_{t,0}>2\sqrt{px} + 2x\Bigr) \\
\label{eq:exp-to-kill-p-1}	&\leq 2\log(en) e^{-x} \leq \frac{\epsilon}{2},
	\end{align}
	where the final inequality holds because $C \geq 9+9\log(4/\epsilon)$.
	% which can be shown to be arbitrarily small as long as $x\geq C_1\log\log(8n)$ for some sufficiently large constant $C_1>0$.
	%This immediately implies that $\mathbb{P}_{\theta}\psi$ is bounded by an arbitrarily small constant given the choice of~$r$.
	
	Now suppose that $\theta\in\Theta(p,n,s,\rho)$. For any $\theta\in\Theta(p,n,s,\rho)$, there exists some $t_0\in [n-1]$ such that $X_1,\ldots,X_{t_0}\stackrel{\mathrm{iid}}{\sim}N_p(\mu_1,I_p)$ and $X_{t_0+1},\ldots,X_n\stackrel{\mathrm{iid}}{\sim}N_p(\mu_2,I_p)$, where the vectors $\mu_1$ and $\mu_2$ satisfy $\frac{t_0(n-t_0)}{n}\|\mu_1-\mu_2\|^2\geq\rho^2$. Without loss of generality, we may assume that $t_0\leq n/2$, since the case $t_0>n/2$ can be handled by a symmetric argument. By the definition of $\mathcal{T}$, there exists a unique $\wt{t}\in\mathcal{T}$ such that $t_0/2 < \wt{t}\leq t_0$.  Now $A_{\wt{t},a}\sim \chi_{p,\delta^2}^2-p$, where the non-centrality parameter $\delta^2$ satisfies
	$$\delta^2=\frac{\wt{t}\|\mu_1-\mu_2\|^2}{2}\geq\frac{t_0\|\mu_1-\mu_2\|^2}{4}\geq \frac{t_0(n-t_0)}{4n}\|\mu_1-\mu_2\|^2\geq\frac{\rho^2}{4}.$$
	Therefore, by Chebychev's inequality,
	\begin{align}
	\label{eq:usedlater-1}
	\mathbb{E}_{\theta}(1-\psi) &\leq \mathbb{P}_{\theta}\biggl(\max_{t\in\mathcal{T}} \|Y_t\|^2 - p \leq \frac{\rho^2}{32}\biggr) \leq \mathbb{P}_{\theta}\biggl(\|Y_{\wt{t}}\|^2 \! - \! p \leq \frac{\delta^2}{8}\biggr) \leq \frac{2(p+2\delta^2)}{(7/8)^2\delta^4} \nonumber \\
	&\leq \frac{2(p+\rho^2/2)}{(7/32)^2\rho^4}
	\leq \frac{2}{49C^2\log \log (8n)} + \frac{32}{49C\sqrt{p\log \log (8n)}} \leq \frac{\epsilon}{2},
	\end{align}
	since $C \geq 49/(68\epsilon)$.
	%The choice of $r$ and the condition of $\rho^2$ imply that $\mathbb{P}_{\theta}(1-\psi)\leq \mathbb{P}_{\theta}\left(\max_{t\in\mathcal{T}}A_{t,a}\leq \frac{\rho^2}{32}\right)$. Therefore, $\mathbb{P}_{\theta}(1-\psi)$ is sufficiently small under the condition of $\rho^2$.
	
	We now consider the case where $s<\sqrt{p\log\log(8n)}$, and first suppose that $\theta\in\Theta_0(p,n)$. By Lemma \ref{lem:trunc-chi-square-tail} and a union bound, we have
	\begin{align}
	\mathbb{E}_\theta\psi = \mathbb{P}_\theta\biggl(\max_{t\in\mathcal{T}}A_{t,a}>Cr^*\biggr) &\leq \mathbb{P}_{\theta}\biggl(\max_{t\in\mathcal{T}}A_{t,a}> \sqrt{pe^{-a^2/2}x}+x\biggr) \nonumber \\
	&\leq 2\log(en)e^{-x} \leq \frac{\epsilon}{2}, \label{eq:type-1-sparse-main}
	\end{align}
	where we still take $x = \frac{C}{9}\log\log(8n)$.
	%which is arbitrarily small if $x\geq C_1\log\log(8n)$ for some sufficiently large constant $C_1>0$. Since $a^2=4\log\left(\frac{ep\log\log(8n)}{s^2}\right)$ when $s<\sqrt{p\log\log(8n)}$, the choice of $r$ implies that $r\geq C_2\left(\sqrt{pe^{-a^2/4}\log\log(8n)}+\log\log(8n)\right)$ for some sufficiently large $C_2>0$. %Thus, $\mathbb{P}_{\theta}\psi$ can be bounded by (\ref{eq:type-1-sparse-main}).
	
	Finally, for $\theta\in\Theta(p,n,s,\rho)$, we define $\wt{t},\mu_1,\mu_2$ as in the dense case. %Recall that we have $\delta^2=\wt{t}\|\mu_1-\mu_2\|^2/2\geq\rho^2/4$. Moreover, 
	Now
	$$\max_{t\in\mathcal{T}}A_{t,a}\geq A_{\wt{t},a}=\sum_{j=1}^p \bigl(Y_{\wt{t}}(j)^2-\nu_a\bigr)\mathbbm{1}_{\{|Y_{\wt{t}}(j)|\geq a\}},$$
	where $Y_{\wt{t}}(j)\sim N(\Delta_j,1)$, with $\Delta_j:=\sqrt{\frac{\wt{t}}{2}}\bigl\{\mu_1(j)-\mu_2(j)\bigr\}$.  By Lemma \ref{lem:trunc-chi-square-mean}, 
	\begin{align*}
	\mathbb{E}A_{\wt{t},a} &\geq \frac{1}{2}\sum_{j:|\Delta_j|\geq 8a} \Delta_j^2 = \frac{1}{2}\Biggl(\sum_{j=1}^p\Delta_j^2 - \sum_{j:0 < |\Delta_j|<8a} \! \Delta_j^2\Biggr) \geq \frac{1}{2}\bigl(\delta^2 \!-\! 64sa^2\bigr) \geq \frac{\delta^2}{4},
	\end{align*}
	where the last inequality uses the fact that $4\delta^2 \geq \rho^2\geq 8Csa^2$. Moreover, by Lemma~\ref{lem:trunc-chi-square-var}, we have
	\[
		\Var(A_{\wt{t},a}) = \sum_{j=1}^p\Var\bigl\{(Y_{\wt{t}}(j)^2-\nu_a)\mathbbm{1}_{\{|Y_{\wt{t}}(j)|\geq a\}}\bigr\} \leq C_1\bigl(pe^{-a^2/4} + sa^4 + \delta^2\bigr).
		%before the constants are 54 and 23
	\]
	By Chebychev's inequality, we deduce that
	\begin{align}
	\nonumber\mathbb{E}_{\theta}(1-\psi) &= \mathbb{P}_{\theta}\left(\max_{t\in\mathcal{T}}A_{t,a}\leq \frac{\rho^2}{32}\right)
	 \leq \mathbb{P}_{\theta}\left(A_{\wt{t},a}\leq \frac{\delta^2}{8}\right) \\
	 &\leq \frac{\Var(A_{\wt{t},a})}{\bigl(\mathbb{E}A_{\wt{t},a}- \delta^2/8\bigr)^2} \leq \frac{C_1\bigl(pe^{-a^2/4} + sa^4 + \delta^2\bigr)}{\delta^4/2^6}\nonumber \\
	\label{eq:usedlater-2} &\leq \frac{C_1pe^{-a^2/4} + C_1sa^4 + C_1\rho^2/4}{\rho^4/2^{10}} \leq \frac{C_1}{C^2} + \frac{16C_1}{C^2} + \frac{8C_1}{C} \leq \frac{\epsilon}{2},
	\end{align}
	as required. %The second inequality in (\ref{eq:usedlater-2}) is by plugging the definition of $a$ and the lower bound on $\rho$.
	% Thus, by increasing $C = C(\epsilon)$ if necessary, we may also ensure that $\mathbb{E}_{\theta}(1-\psi) \leq \epsilon/2$, as required.
	%which is sufficiently small given the choice of $a^2$ and the condition of $\rho^2$. %The choice of $r$ and the condition of $\rho^2$ imply that $\mathbb{P}_{\theta}(1-\psi)\leq \mathbb{P}_{\theta}\left(\max_{t\in\mathcal{T}}A_{t,a}\leq \frac{\rho^2}{32}\right)$. Therefore, $\mathbb{P}_{\theta}(1-\psi)$ is also sufficiently small, and the proof is complete.
\end{proof}

The proof of Proposition~\ref{thm:lower} below is based on the lower bound technique that involves bounding the chi-squared divergence.  
\begin{proof}[Proof of Proposition~\ref{thm:lower}]
We only need to derive the lower bound for $n$ that is sufficiently large. This is because when $n$ is bounded, the minimax rate is reduced to the formula~\eqref{eq:collier}, and the derivation of the lower bound follows the same argument as in \cite{collier2017minimax}.  The strategy for our lower bound is to construct a suitable prior distribution on the alternative hypothesis parameter space and to bound the total variation distance between the null distribution and the mixture distribution induced by the prior.  More precisely, by Lemmas~\ref{lem:lower} and~\ref{lem:chi-square-cal}, given $\eta > 0$, it suffices to find a probability measure~$\nu$ with $\supp(\nu)\subseteq\Theta(p,n,s,\rho)$ and a universal constant $c > 0$ such that
	\begin{equation}
	\mathbb{E}_{(\theta_1,\theta_2)\sim \nu\otimes\nu}\exp(\iprod{\theta_1}{\theta_2})\leq 1 + \eta,\label{eq:lower-goal}
	\end{equation}
	whenever $\rho=c\rho^*$.
	
	We first consider the case when $s\geq\sqrt{p\log\log(8n)}$. We define $\nu$ to be the distribution of $\theta = (\theta_{j\ell}) \in \Theta(p,n,s,\rho)$ with $\rho := \sqrt{s}\beta/\sqrt{2}$ for some $\beta = \beta(p,n,s)$ to be defined later, generated according to the following sampling process:
	\begin{enumerate}
		\item Uniformly sample a subset $S \subseteq [p]$ of cardinality $s$;
		\item Independently of $S$, generate $k \sim \mathrm{Unif}\{0,1,2,\ldots,\floor{\log_2(n/2)}\}$;
		\item Independently of $(S,k)$, sample $u = (u_1,\ldots,u_p) \in\mathbb{R}^p$, where \\$u_1,\ldots,u_p \stackrel{\mathrm{iid}}{\sim}\text{Unif}(\{-1,1\})$;
		\item Given the triplet $(S,k,u)$ sampled in the previous steps, define $\theta_{j\ell} :=\frac{\beta}{\sqrt{2^k}}u_j$ for all $(j,\ell)\in S\times [2^k]$ and $\theta_{j\ell}:=0$ otherwise.
	\end{enumerate}
	Since
    $$\frac{2^k(n-2^k)}{n}\frac{\beta^2}{2^k}\sum_{j\in S}u_j^2=s\beta^2\frac{n-2^k}{n}\geq\frac{s\beta^2}{2},$$
    we have $\supp(\nu)\subseteq\Theta(p,n,s,\rho)$ with $\rho^2=s\beta^2/2$.
	Suppose we independently sample triplets $(S,k,u)$ and $(T,l,v)$ from the first three steps and use these two triplets to construct $\theta_1$ and $\theta_2$ according to the fourth step. Then%, by direct calculation, we obtain
	$$\iprod{\theta_1}{\theta_2}=\bigl(2^k\wedge 2^l\bigr)\frac{\beta^2}{\sqrt{2^{k+l}}}\sum_{j\in S\cap T}u_jv_j=\frac{\beta^2}{2^{|l-k|/2}}\sum_{j\in S\cap T}u_jv_j.$$
	Thus
	$$\mathbb{E}_{(\theta_1,\theta_2)\sim \nu\otimes\nu}\exp(\iprod{\theta_1}{\theta_2})=\mathbb{E}\exp\biggl(\frac{\beta^2}{2^{|l-k|/2}}\sum_{j\in S\cap T}u_jv_j\biggr),$$
	where the expectation is over the joint distribution of $(S,k,u,T,l,v)$.  But we also have that $u_jv_j\stackrel{\mathrm{iid}}{\sim}\mathrm{Unif}(\{-1,1\})$, so
	\begin{align*}
	\mathbb{E}_{(\theta_1,\theta_2)\sim \nu\otimes\nu}\exp(\iprod{\theta_1}{\theta_2}) &= \mathbb{E}\biggl\{\mathbb{E}\biggl(\frac{1}{2}e^{\beta^2/2^{|l-k|/2}} +\frac{1}{2}e^{-\beta^2/2^{|l-k|/2}}\biggr)\biggr\}^{|S\cap T|} \\
	&\leq \mathbb{E}\exp\left(|S\cap T|\frac{\beta^4}{2^{|l-k|+1}}\right),
	\end{align*}
	where the final inequality uses the fact that $(e^x+e^{-x})/2\leq e^{x^2/2}$ for $x \in \mathbb{R}$ and Jensen's inequality. Note that $|S\cap T|$ is distributed according to the hypergeometric distribution\footnote{The $\mathrm{Hyp}(p,s,r)$ distribution models the number of white balls drawn when sampling $r$ balls without replacement from an urn containing $p$ balls, $s$ of which are white.} $\mathrm{Hyp}(p,s,s)$.  By the fact that the $\mathrm{Hyp}(p,s,s)$ distribution is no larger, in the convex ordering sense, that the binomial distribution $\mathrm{Bin}(s,s/p)$ \citep[][Theorem~4]{hoeffding1963}, we have
	%It is known \citep{aldous1985exchangeability,collier2017minimax} that the binomial distribution $\mathrm{B}(s,s/p)$ is stochastically larger than the hypergeometric distribution $\mathrm{Hyp}(p,s,s)$ in convex ordering, which leads to
	\begin{align}
	\nonumber \mathbb{E}\exp\left(|S\cap T|\frac{\beta^4}{2^{|l-k|+1}}\right) &\leq \biggl\{\mathbb{E}\biggl(1-\frac{s}{p}+\frac{s}{p}e^{\beta^4/2^{|l-k|+1}}\biggr)\biggr\}^s \\
	\label{eq:used-in-asymp-sparse} &\leq \mathbb{E}\biggl\{\biggl(1+\frac{s}{2p}\frac{\beta^4}{2^{|l-k|}}e^{\beta^4/2^{|l-k|+1}}\biggr)^s\biggr\} =: \mathbb{E}L(l,k),
	\end{align}
	say, where we have used $e^x-1\leq xe^x$ for all $x\geq 0$ and Jensen's inequality to derive the last inequality above.  %Then
	%$$\mathbb{E}L(l,k)=\mathbb{E}L(l,k)\indc{|l-k|\leq\log\log(8n)}+\mathbb{E}L(l,k)\indc{|l-k|>\log\log(8n)},$$
	%and we will give bounds for the two terms separately.
	From now on, we set $\beta := \bigl\{c_1ps^{-2}\log\log(8n)\bigr\}^{1/4}$, where $c_1 = c_1(\eta) \in (0,1/4]$ will be chosen to be sufficiently small. The condition $s\geq\sqrt{p\log\log(8n)}$ ensures that $\beta \leq 1$.  We first claim that %\red{assuming without loss of generality that $\eta \in (0,1]$, we have}
	\begin{equation}
	\label{Eq:FirstTerm}
	\mathbb{E}\bigl\{L(l,k)\mathbbm{1}_{\{l=k\}}\bigr\} %\leq \biggl(1+\frac{c_1}{s}\log\log(8n)\biggr)^s\mathbb{P}(l=k)
	\leq \Bigl\{\Bigl(1+\frac{\eta}{4}\Bigr)\mathbb{P}(l=k)\Bigr\} \vee \frac{\eta}{4},%\leq \frac{\log^{c_1}(8n)}{1+\lfloor\log_2(n/2)\rfloor}.
	\end{equation}
	provided that $c_1\leq \eta\log\left(1+\frac{\eta}{4}\right)/8$.  To see this, first note that for $n\geq \exp(\exp(8/\eta))/8$, we have
	\begin{align*}
	\mathbb{E}\bigl\{L(l,k)\mathbbm{1}_{\{l=k\}}\bigr\} &\leq \biggl(1+\frac{c_1}{s}\log\log(8n)\biggr)^s\mathbb{P}(l=k) \leq \frac{\log^{1/4}(8n)}{1+\lfloor\log_2(n/2)\rfloor} \\
	&\leq \frac{\eta}{8} \log\log(8n)\frac{\log^{1/4}(8n)}{1+\lfloor\log_2(n/2)\rfloor} \leq \frac{\eta}{4}.
	\end{align*}
	On the other hand, when $n<\exp(\exp(8/\eta))/8$, we have
	\[
	\mathbb{E}\bigl\{L(l,k)\mathbbm{1}_{\{l=k\}}\bigr\} \leq \log^{c_1}(8n) \mathbb{P}(l=k) \leq e^{8c_1/\eta}\mathbb{P}(l=k) \leq \Bigl(1+\frac{\eta}{4}\Bigr)\mathbb{P}(l=k).
	\]
	%\blue{In fact, when $n\geq \exp(\exp(4/\eta))/8$, we have the same bound as the second term, which is $\frac{\eta}{2}$ due to the fact that $1\leq \frac{\eta}{4}\log\log(8n)$.  When $n<\exp(\exp(4/\eta))/8$, we have $\log^{c_1}(8n)\leq e^{4c_1/\eta}\leq 1+\frac{\eta}{2}$ if $c_1\leq \eta\log\left(1+\frac{\eta}{2}\right)/4$.}
	Moreover,
	\begin{align}
	\mathbb{E}\bigl\{L(l,k)&\mathbbm{1}_{\{0 < |l-k|\leq (\eta/8)\log\log(8n)\}}\bigr\} \nonumber\\
	&\leq\biggl(1+\frac{c_1}{s}\log\log(8n)\biggr)^s\mathbb{P}\biggl(0 < |l-k|\leq \frac{\eta}{8}\log\log(8n)\biggr) \nonumber \\
	%\nonumber %&\leq \left(1+\frac{c_1\blue{C}}{s}\log\log(8n)\right)^s\frac{2\xi\log\log(8n)}{\log_2(n/2)} \\
	\label{eq:Changed1}&\leq\log^{1/4}(8n)\frac{\eta\log\log(8n)}{4\bigl(1+\lfloor\log_2(n/2)\rfloor\bigr)} \leq \frac{\eta}{2}.
	\end{align}
	%as long as $c_1$ is smaller than a universal constant.
	%where $c'_1>0$ is a constant greater than and can be arbitrarily close to $c_1$.
	%The last bound holds as long as we choose $c_1<c_1'<2$.
	For the third term, we write $a_\eta := \sup_{n\geq 2}\frac{\log\log(8n)}{\log^{(\eta/8) \log(2)}(8n)}$.  By reducing $\eta > 0$ and $c_1 = c_1(\eta)$ if necessary, we may assume that $c_1a_\eta \leq \eta/8 \leq 1/2$, so that
	\begin{align}
	\label{Eq:Changed} 
	\mathbb{E}\bigl\{L(l,k)&\mathbbm{1}_{\{|l-k|> (\eta/8)\log\log(8n)\}}\bigr\}\nonumber\\
	 &\leq \biggl(1+\frac{c_1a_\eta}{s}\biggr)^s\mathbb{P}\bigl\{|l-k|> (\eta/8)\log\log(8n)\bigr\} \nonumber \\
	&\leq (1 + 2c_1a_\eta)\mathbb{P}\bigl\{|l-k|> (\eta/8)\log\log(8n)\bigr\} \nonumber \\
	&\leq \biggl(1 + \frac{\eta}{4}\biggr)\mathbb{P}\bigl\{|l-k|> (\eta/8)\log\log(8n)\bigr\}. 
	\end{align}
	% provided that $c_1 \leq \min\bigl\{1/(2a_\eta),\eta/(4a_\eta)\bigr\}$.
	From~\eqref{Eq:FirstTerm},~\eqref{eq:Changed1} and~\eqref{Eq:Changed}, we conclude that
	\[
	\mathbb{E}\bigl\{L(l,k)\bigr\} \leq 1+\eta,
	\]
	which establishes~\eqref{eq:lower-goal} in the case $s \geq \sqrt{p \log \log(8n)}$.
	%where we define $a_{\xi}=\sup_{n\geq 2}\frac{\log\log(8n)}{\log^{\xi \log(2)}(8n)}$, which is a finite constant depending on $\xi$. Therefore, overall, $\mathbb{E}L(l,k)$ can be bounded by $O(\xi)+e^{Cc_1a_{\xi}}$ , which directly implies that $\mathbb{E}L(l,k)$ goes to $1$ as $c_1$ goes to $0$ uniformly for all $n,s,p$.
	%We have therefore established (\ref{eq:lower-goal}), which implies the desired lower bound $\rho^2=s\beta^2\asymp\sqrt{p\log\log (8n)}$ in this case.
	
	We now consider the case $s<\sqrt{p\log\log (8n)}$ and $s\log\left(\frac{ep\log\log(8n)}{s^2}\right)\geq \log\log(8n)$. The goal is to derive a lower bound with rate $s\log\left(\frac{ep\log\log(8n)}{s^2}\right)$. We use the same $\nu$ specified in the previous case except that in the third step, we set $u_j=1$ for all $j\in S$. With this modification of~$\nu$, we have
	$\iprod{\theta_1}{\theta_2}=|S\cap T| \frac{\beta^2}{2^{|l-k|/2}}$. Again, $|S\cap T|$ is distributed according to the hypergeometric distribution $\mathrm{Hyp}(p,s,s)$, and 
	\begin{align}
	\label{eq:mgfb3step} &\mathbb{E}_{(\theta_1,\theta_2)\sim \nu\otimes\nu}\exp(\iprod{\theta_1}{\theta_2}) = \mathbb{E}\exp\left(|S\cap T|\frac{\beta^2}{2^{|l-k|/2}}\right) \\
	\label{eq:no-where} \leq \biggl\{&\mathbb{E}\left(1-\frac{s}{p}+\frac{s}{p}e^{\beta^2/2^{|l-k|/2}}\right)\biggr\}^s \leq \mathbb{E}\biggl\{\left(1+\frac{s}{p}e^{\beta^2/2^{|l-k|/2}}\right)^s\biggr\} =: \mathbb{E}R(l,k),
	\end{align}
	say.  We take $\beta :=\log^{1/2}\left(\frac{c_2p\log\log(8n)}{s^2}\right)$, where $c_2 = c_2(\eta) \in (0,1/4]$ will be chosen sufficiently small. %Write $R(l,k) :=\left(1+\frac{s}{p}e^{\beta^2/2^{|l-k|/2}}\right)^s$. %, and we need to bound $\mathbb{E}R(l,k)\indc{|l-k|\leq\log\log(8n)}$ and $\mathbb{E}R(l,k)\indc{|l-k|>\log\log(8n)}$, respectively.
	Parallel to the bounds for $\mathbb{E}L(l,k)$, we will split into three terms. For the first term, we have
	\begin{align*}
	\mathbb{E}\bigl\{R(l,k)\mathbbm{1}_{\{l=k\}}\bigr\} &\leq \biggl(1+\frac{c_2}{s}\log\log(8n)\biggr)^s\mathbb{P}(l=k) \\
    &\leq \Bigl\{\Bigl(1+\frac{\eta}{4}\Bigr)\mathbb{P}(l=k)\Bigr\} \vee \frac{\eta}{4},%\leq \frac{\log^{c_1}(8n)}{1+\lfloor\log_2(n/2)\rfloor}.
	\end{align*}
	as before, as long as $c_2\leq \eta\log\left(1+\frac{\eta}{4}\right)/8$.  For the second term, 
	\begin{align*}
	\mathbb{E}\bigl\{R(l,k)&\mathbbm{1}_{\{0<|l-k|\leq (\eta/8)\log\log(8n)\}}\bigr\}\\ 
	&\leq \biggl(1+\frac{s}{p}e^{\beta^2}\biggr)^s\mathbb{P}\left(0<|l-k|\leq\frac{\eta}{8}\log\log(8n)\right) \\
	&\leq \left(1+\frac{\log\log(8n)}{4s}\right)^s\frac{\eta\log\log(8n)}{4\bigl(1+\lfloor\log_2(n/2)\rfloor\bigr)} \leq \frac{\eta}{2}.
	\end{align*}
	For the third term, define $b_{\eta}:=\sup_{n\geq 2} \exp\biggl(\frac{\log \log\log(8n)}{\log^{(\eta/16)\log 2}(8n)}\biggr)$. By reducing $c_2 = c_2(\eta)$ if necessary, we may assume that $c_2\leq \log(1+\eta/4)/b_{\eta}$.  Then %we have %may choose $c_2 \in (0,1)$ small enough that $\beta^2/2^{\log \log (8n)/2} \leq \log(en)$, and then 
	\begin{align*}
	\mathbb{E}\bigl\{&R(l,k)\mathbbm{1}_{\{|l-k|> (\eta/8)\log\log(8n)\}}\bigr\} \\%&\leq \Biggl\{1+\frac{s}{p}\exp\biggl(\frac{\log\bigl(c_2ps^{-2}\log\log(8n)\bigr)}{\log(en)}\biggr)\Biggr\}^s \\
	&\leq \Biggl\{1+\frac{s}{p}\exp\biggl(\frac{\log(c_2p/s^2)+\log\log\log(8n)}{\log^{(\eta/16)\log 2}(8n)}\biggr)\Biggr\}^s\\ 
	&\hspace{6cm} \times\mathbb{P}\bigl\{|l-k|> (\eta/8)\log\log(8n)\bigr\}\\
	%&\leq \left(1+b_{\eta}\frac{s}{p}\exp\left(\log\left(\frac{c_2p}{s^2}\right)\right)\right)^s\mathbb{P}\bigl\{|l-k|> (\eta/8)\log\log(8n)\bigr\} \\
	&\leq e^{c_2b_{\eta}}\mathbb{P}\bigl\{|l-k|> (\eta/8)\log\log(8n)\bigr\}\\
	&\leq \Bigl(1+\frac{\eta}{4}\Bigr)\mathbb{P}\bigl\{|l-k|> (\eta/8)\log\log(8n)\bigr\},
	\end{align*}
	%whenever $c_2\leq \log(1+\eta/2)/b_{\eta}$. %Together we have the bound
	%\[
	%\mathbb{E}R(l,k)\leq 1+\frac{3}{2}\eta,\]
	%which suffices since $\eta$ is arbitrary. This implies the desired lower bound $\rho^2=s\beta^2\asymp s\log\left(\frac{ep\log\log(8n)}{s^2}\right)$ in this case.
	which establishes~\eqref{eq:lower-goal} when $s<\sqrt{p\log\log (8n)}$ and $s\log\left(\frac{ep\log\log(8n)}{s^2}\right)\geq \log\log(8n)$.
	
	The final case is $s<\sqrt{p\log\log (8n)}$ and $s\log\left(\frac{ep\log\log(8n)}{s^2}\right)< \log\log(8n)$. Notice that in our definition of the parameter space $\Theta^{(t_0)}(p,n,s,\rho)$, if we restrict $\mu_1$ and $\mu_2$ to agree in all coordinates except perhaps the first, then the testing problem is equivalent to testing between $\Theta_0(1,n)$ and $\Theta(1,n,1,\rho)$. Therefore, the lower bound construction in \cite{gao2017minimax} applies directly here and we obtain the rate $\log\log(8n)$.
	
	The result follows.
	% The conclusion is obtained by combining the three cases above.
	%\blue{In fact, the constant $C_1$ in (\ref{eq:lower-goal}) can be made arbitrarily close to $1$ by carefully tracking its dependence on other constants. The proof is complete.}
\end{proof}

The proof of Theorem~\ref{thm:adaptive} uses several arguments from the proof of Proposition~\ref{thm:upper}.
\begin{proof}[Proof of Theorem \ref{thm:adaptive}]
We first bound $\mathbb{E}\psi_{\rm adaptive}$ for any $\theta\in\Theta_0(p,n)$, and let $\epsilon \in (0,1)$. By a union bound, we have
\begin{equation}
\mathbb{E}_{\theta}\psi_{\rm adaptive} \leq \sum_{s\in\mathcal{S}: s<\sqrt{p\log\log(8n)}}\mathbb{E}_{\theta}\psi^{(s)} + \mathbb{E}_{\theta}\psi^{(p)}. \label{eq:smart-union}
\end{equation}
%We do not need to sum over $s\geq \sqrt{p\log\log(8n)}$ because $\psi^{(s)}=\psi^{(p)}$ for all $s\geq\sqrt{p\log\log(8n)}$. 
By the same argument as that used in the proof of Proposition~\ref{thm:upper}, we have $\mathbb{E}_{\theta}\psi^{(p)}\leq \epsilon/4$ as long as $C = C(\epsilon) > 0$ is chosen sufficiently large (in particular, it will need to be at least as large as the choice of $C$ in the proof of Proposition~\ref{thm:upper}).
\iffalse
The idea is to choose the constant $C > 0$ to be sufficiently large, at least twice as the choice of $C$ in the proof of Proposition~\ref{thm:upper}. By a union bound, we have
$\mathbb{E}_{\theta}\psi_{\rm adaptive} \leq \sum_{s\in\mathcal{S}}\mathbb{E}_{\theta}\psi^{(s)}$. \blue{For $s\geq\sqrt{p\log\log(8n)}$, we have
\begin{align*}
\mathbb{E}_{\theta}\psi^{(s)} &= \mathbb{P}_{\theta}\left(\max_{t\in\mathcal{T}}A_{t,0}>C\sqrt{p\log\log(8n)}\right) \\
&\leq \mathbb{P}_{\theta}\left(\max_{t\in\mathcal{T}}A_{t,0}>\frac{C}{2}\sqrt{p\log\log(8n)}+\log\log_2(\red{8}p)\right),
\end{align*}
where we have used the fact that $\frac{C}{2}\sqrt{p\log\log(8n)}\geq\log\log_2(\red{8}p)$ \blue{for a sufficiently large constant $C>0$}. Since the current constant $C$ is at least twice as large as the choice in the proof of Proposition~\ref{thm:upper}, by the exponential tail obtained in (\ref{eq:exp-to-kill-p-1}), the additional term $\log\log_2(\red{8}p)$ implies that
$$\mathbb{P}_{\theta}\left(\max_{t\in\mathcal{T}}A_{t,0}>\frac{C}{2}\sqrt{p\log\log(8n)}+\log\log_2(p)\right)\leq \frac{\epsilon}{2\log_2(p)}.$$}
%\red{For $s\geq\sqrt{p\log\log(8n)}$, we may apply a union bound together with Lemma~\ref{lem:chi-square-tail} with $x = \frac{C}{9}\log \log (8n) \geq \frac{C}{18}\log \log (8n) + \log \log_2(8p)$ similarly to \eqref{eq:exp-to-kill-p-1}, to deduce that
%\begin{align*}
%\mathbb{E}_{\theta}\psi^{(s)} &= \mathbb{P}_{\theta}\left(\max_{t\in\mathcal{T}}A_{t,0}>C\sqrt{p\log\log(8n)}\right) \\
%&\leq \mathbb{P}_{\theta}\left(\max_{t\in\mathcal{T}}A_{t,0}>2\sqrt{px} + 2x\right) \leq 2\log(en) e^{-x} \leq \frac{\epsilon}{2 \log_2(8p)}.
%\end{align*}
%}
\fi

For $s<\sqrt{p\log\log(8n)}$, we recall that $a^2 = a^2(s) =4\log\left(\frac{ep\log\log(8n)}{s^2}\right)$.  As in (\ref{eq:type-1-sparse-main}), we have
$$\mathbb{E}_{\theta}\psi^{(s)}\leq 2\log(en)e^{-x},$$
for any $x$ such that  %$\sqrt{pe^{-a^2/2}x}+x\leq Cr^*$.  
$$\sqrt{\frac{s^4}{e^2p\log^2\log(8n)}x} + x \leq Cr^*(p,n,s). $$
The choice
%refore the inequality $\sqrt{pe^{-a^2/2}x}+x\leq Cr^*$ holds if we choose
\[x=\frac{C}{2}\left(\frac{p\log^2 \log(8n)}{s^2}\wedge r^*(p,n,s)\right)\]
satisfies this condition provided that $C\geq 2$. %, where we have used the definition $r^*=s\log\left(\frac{ep\log\log(8n)}{s^2}\right)\vee\log\log(8n)$. 
This choice of $x$ gives the tail bound
$$\mathbb{E}_{\theta}\psi^{(s)}\leq 2\log(en)\exp\left(-\frac{C}{2}\frac{p\log^2\log(8n)}{s^2}\right) + 2\log(en)\exp\left(-\frac{C}{2}r^*(p,n,s)\right).$$
Moreover, provided we choose $C = C(\epsilon) > 0$ sufficiently large, we have
\begin{align*}
2\log(en)&\sum_{s\in\mathcal{S}: s<\sqrt{p\log\log(8n)}}\exp\left(-\frac{C}{2}\frac{p\log^2\log(8n)}{s^2}\right) \\
&\leq 2\log(en)\sum_{k=0}^{\infty}\exp\left(-\frac{C}{2}4^k\log\log(8n)\right) \leq \frac{\epsilon}{8}.
\end{align*}
Similarly,
\begin{align*}
& 2\log(en)\sum_{s\in\mathcal{S}: s<\sqrt{p\log\log(8n)}}\exp\left(-\frac{C}{2}r^*(p,n,s)\right) \\
&\hspace{0.5cm}\leq 2\log(en)\sum_{s\in\mathcal{S}: s<\sqrt{p\log\log(8n)}}\exp\biggl\{-\frac{C}{4}s\log\left(\frac{ep\log\log(8n)}{s^2}\right) - \frac{C}{4}\log\log(8n)\biggr\} \\
&\hspace{0.5cm}\leq \frac{\epsilon}{8}\sum_{s\in[p]: s<\sqrt{p\log\log(8n)}}\exp\biggl\{-\frac{C}{4}s\log\left(\frac{ep\log\log(8n)}{s^2}\right) \biggr\} \leq \frac{\epsilon}{8}.
\end{align*}
Therefore we have $\mathbb{E}_{\theta}\psi_{\rm adaptive} \leq \epsilon/2$ by (\ref{eq:smart-union}).

Finally, for $\theta\in\Theta(p,n,s,\rho)$, we bound $\mathbb{E}_{\theta}(1-\psi_{\rm adaptive})$. By the definition of $\mathcal{S}$, there exists a unique $\wt{s}\in\mathcal{S}$ such that $s/2<\wt{s}\leq s$.  Moreover, $\Theta(p,n,s,\rho)\subseteq\Theta(p,n,p\wedge2\wt{s},\rho)$, so by the same argument used in the proof of Proposition~\ref{thm:upper}, we have
$$\mathbb{E}_{\theta}(1-\psi_{\rm adaptive})\leq \mathbb{E}_{\theta}(1-\psi^{(\wt{s})})\leq \frac{\epsilon}{2},$$
as long as $\rho^2\geq 32Cr^*(p,n,p\wedge2\wt{s})$. But $\wt{s}\leq s$, so $r^*(p,n,p\wedge 2\wt{s})\leq 2r^*(p,n,s)$, which implies that $\rho^2\geq 64Cr^*(p,n,s)$ is a sufficient condition for controlling the error under the alternative. 
\end{proof}

For Theorem \ref{thm:asymp-dense} and Theorem \ref{thm:asymp-sparse}, we will prove lower and upper bounds separately.

\begin{proof}[Proof of Theorem \ref{thm:asymp-dense} (lower bound)]
Since this proof is asymptotic, we assume in many places and without further comment (both here and in the upper bound proof that follows) that $n$ is sufficiently large in developing our bounds. Let $f_{0n} := \phi$ be the density function of the standard normal distribution on $\mathbb{R}^{p\times n}$. Define $f_{1n}(x) := \int_{\supp(\nu_n)} \phi(x-\theta) \, d\nu_n(\theta)$, where $\nu_n$ is the distribution of $\theta$ when $\theta$ is generated according to the following sampling process:
\begin{enumerate}
    	\item Uniformly sample a subset $S \subseteq [p]$ of cardinality $s$;
    	\item Independently of $S$, generate $k \sim \mathrm{Unif}(\mathcal{G}_n)$ where 
\begin{align*}
\mathcal{G}_n := \bigl\{0,&\floor{c\log\log\log n},2\floor{c\log\log\log n},\ldots, \\
&\floor{c\log\log\log n}\floor{\log_2(\sqrt{n})/\floor{c\log\log\log n}}\bigr\},
\end{align*}
    	for some constant $c > 0$ to be chosen later;
    	\item Independently of $(S,k)$, sample $u = (u_1,\ldots,u_p) \in\mathbb{R}^p$, where \\$u_1,\ldots,u_p \stackrel{\mathrm{iid}}{\sim}\text{Unif}(\{-1,1\})$;
    	\item Given the triplet $(S,k,u)$ sampled in the previous steps, define $\theta_{j\ell} :=\frac{\beta}{\sqrt{2^k}}u_j$ for all $(j,\ell)\in S\times [2^k]$ and $\theta_{j\ell}:=0$ otherwise, where $\beta^2 = (2-\epsilon)\sqrt{\frac{p\log\log n}{s^2}}$ for some small, constant $\epsilon>0$.
    \end{enumerate}
Since
\[\frac{2^k(n-2^k)}{n}\cdot \frac{\beta^2}{2^k} \sum_{j\in S} u_j^2 \geq \frac{n-\sqrt{n}}{n}(2-\epsilon)\sqrt{p\log\log n},\]
we have $\supp(\nu_n)\subseteq\Theta(p,n,s,\rho)$ with $\rho^2=(2-2\epsilon)\sqrt{p\log\log n}$, which, following the argument in the proof of Lemma~\ref{lem:lower}, yields that 
$$\mathcal{R}(\rho)\geq \int_{\mathbb{R}^{p \times n}} f_{0n}\wedge f_{1n}.$$
Therefore, in order to show that $\mathcal{R}(\rho)\rightarrow 1$, it suffices to establish that $\frac{f_{1n}}{f_{0n}}\rightarrow 1$ in $f_{0n}$-probability as $n\rightarrow\infty$. By a truncated second moment argument, given as Lemma~\ref{Lemma:TruncatedSM}, it suffices to choose some measurable set $A_n\subseteq\mathbb{R}^{p\times n}$ and establish the following two conditions:
	\begin{enumerate}
		\item $\mathbb{E}_{X\sim f_{0n}}\bigl\{\frac{f_{1n}(X)}{f_{0n}(X)}\indc{X\in A_n}\bigr\}\rightarrow 1$;
		\item $\mathbb{E}_{X\sim f_{0n}}\bigl\{\bigl(\frac{f_{1n}(X)}{f_{0n}(X)}\bigr)^2\indc{X\in A_n}\bigr\}\rightarrow 1$.
	\end{enumerate}
By definition of $f_{1n}$ and Lemma \ref{lem:chi-square-cal}, the above two conditions are equivalent to:
	\begin{enumerate}
		\item $\mathbb{P}_{X\sim f_{1n}}(X\in A_n)\rightarrow 1$;
		\item $\mathbb{E}_{(\theta_1,\theta_2)\sim\nu_n\otimes\nu_n}\bigl\{\mathbb{P}_{X\sim N_{p\times n}(\theta_1+\theta_2,I)}(X\in A_n)\exp\bigl(\iprod{\theta_1}{\theta_2}\bigr)\bigr\}\rightarrow 1$.
	\end{enumerate}
    Fix $\epsilon_1>0$ and define $\mathcal{I}_n:= \{2^i:i \in \mathcal{G}_n\}$. Then we choose the truncation set to be 
    \[A_n:=\left\{X\in \mathbb{R}^{p\times n}:\max_{t\in \mathcal{I}_n}\norm{X_1+\dots+X_t}_2^2/t\leq p+(2+\epsilon_1)\sqrt{p\log\log n}\right\}.\]
    
\noindent\textit{Proof of Condition 1.}    
    To prove that $\mathbb{P}_{X\sim f_{1n}}(X\in A_n)\rightarrow 1$, it suffices to show that $\sup_{\theta \in \supp(\nu_n)}\mathbb{P}_{\theta}(X\notin A_n)\rightarrow 0$. 
    Assume that the true changepoint location in $\theta$ is $t_0$. Then, $\norm{X_1+\dots+X_t}_2^2/t$ follows a non-central chi-squared distribution with degrees of freedom $p$ and non-centrality parameter 
    \[\frac{(2-\epsilon)\sqrt{p\log\log n}}{(t/t_0)\vee (t_0/t)}.\] 
    We therefore divide the time grid into two parts $\mathcal{I}_n=\mathcal{I}^1_n\cup \mathcal{I}^2_n$, where $\mathcal{I}^1_n:=\{t_0\}$ and $\mathcal{I}^2_n:=\mathcal{I}_n\setminus \{t_0\}$. Since $\floor{c\log\log\log n}\rightarrow \infty$, the non-centrality parameter for $t\in \mathcal{I}_n^2$ is smaller than $\frac{\epsilon_1}{2}\sqrt{p\log\log n}$. Then 
    \begin{align*}
    \mathbb{P}_{\theta}(X\notin A_n) &\leq  \sum_{t\in \mathcal{I}_n^1\cup \mathcal{I}_n^2}\mathbb{P}_\theta\biggl(\frac{\norm{X_1+\dots+X_t}_2^2}{t} > p+(2+\epsilon_1)\sqrt{p\log\log n}\biggr)\\
    &\leq \mathbb{P}\left(\chi^2_{p,(2-\epsilon)\sqrt{p\log\log n}} > p+(2+\epsilon_1)\sqrt{p\log\log n}\right)\\
    &\hspace{1cm}+(\log n)\mathbb{P}\left(\chi^2_{p,\epsilon_1\sqrt{p\log\log n}/2} > p+(2+\epsilon_1)\sqrt{p\log\log n}\right)\\
    &\rightarrow 0,
    \end{align*}
    where the last line is by by Lemma~\ref{lem:noncentral-chi-square-tail}, and we have used the fact that $\frac{\log\log n}{\sqrt{p\log\log n}}\rightarrow 0$ as $n\rightarrow \infty$ by assumption. Moreover, the bound we just obtained is uniform over all $\theta\in\supp(\nu_n)$, so $\mathbb{P}_{X\sim f_{1n}}(X\in A_n)\rightarrow 1$.
    
\vspace{.1in}
    
\noindent\textit{Proof of Condition 2.}   
    We independently sample $(S,k,u)$ and $(T,l,v)$ with distribution $\nu_n$, and define $\theta_1$ using $(S,k,u)$ and $\theta_2$ using $(T,l,v)$. Then
    \begin{align*}
      &\mathbb{E}_{(\theta_1,\theta_2)\sim\nu_n\otimes\nu_n}\bigl\{\mathbb{P}_{X\sim N_{p\times n}(\theta_1+\theta_2,I)}(X\in A_n)\exp\bigl(\iprod{\theta_1}{\theta_2}\bigr)\bigr\} \\
      &\hspace{1.5cm}\leq\mathbb{E}_{k,l}\bigl[\mathbb{E}_{S,u,T,v}\bigl\{\mathbb{P}_{X\sim N_{p\times n}(\theta_1+\theta_2,I)}(X\in A_n)\exp\bigl(\iprod{\theta_1}{\theta_2}\bigr)\bigr\}\indc{k=l}\bigr] \\
    &\hspace{5.5cm}+\mathbb{E}_{k,l}\bigl[\mathbb{E}_{S,u,T,v}\bigl\{\exp\left(\iprod{\theta_1}{\theta_2}\right)\bigr\}\indc{k\neq l}\bigr].
    \end{align*}
    By the same calculation as in the proof of Proposition~\ref{thm:lower}, the second term above can be bounded as follows:
    \begin{align*}
      \mathbb{E}_{k,l}\bigl\{\mathbb{E}_{S,u,T,v}\bigl(e^{\iprod{\theta_1}{\theta_2}}\bigr)\indc{k\neq l}\bigr\} &\leq \mathbb{E}_{k,l}\biggl\{\biggl(1+\frac{s}{2p}\frac{\beta^4}{2^{|l-k|}}e^{\beta^4/2^{|l-k|+1}}\biggr)^s\indc{k\neq l}\biggr\}\\
    &\leq \mathbb{E}_{k,l}\biggl\{\exp\biggl(\frac{s^2}{2p}\frac{\beta^4}{2^{|l-k|}}e^{\beta^4/2^{|l-k|+1}}\biggr)\indc{k\neq l}\biggr\}.
    \end{align*}
    Notice that $\beta\rightarrow 0$ in our asymptotic regime and therefore $e^{\beta^4/2^{|l-k|+1}}\rightarrow 1$ uniformly for any $k,l \in \mathcal{G}_n$. Further notice that when $k,l \in \mathcal{G}_n$ satisfy $l\neq k$, we have $|l-k|\geq \floor{c\log\log\log n}$ and therefore when $c$ is sufficiently large,
    \[\frac{s^2\beta^4}{2p2^{|l-k|}}\leq\frac{(2-\epsilon)^2}{2}\frac{\log\log n}{2^{\floor{c\log\log\log n}}} \rightarrow 0.\]
    Thus we have shown that 
    \[\limsup_{n\rightarrow\infty}\mathbb{E}_{k,l}\bigl\{\mathbb{E}_{S,u,T,v}\bigl(e^{\iprod{\theta_1}{\theta_2}}\bigr)\indc{k\neq l}\bigr\} \leq 1.\] 
  Next, we need to show that
  \[\mathbb{E}_{k,l}\bigl[\mathbb{E}_{S,u,T,v}\bigl\{\mathbb{P}_{X\sim N_{p\times n}(\theta_1+\theta_2,I)}(X\in A_n)\exp\bigl(\iprod{\theta_1}{\theta_2}\bigr)\bigr\}\indc{k=l}\bigr] \rightarrow 0.
  \]
We let $\tilde u\in \mathbb{R}^p$ be the vector that equals to $u$ on $S$ and $0$ otherwise and we define $\tilde v$ in the same way for $v$. Then
\begin{align}
  \label{Eq:DelicateDecomp}
      \mathbb{E}_{k,l}&\bigl[\mathbb{E}_{S,u,T,v}\bigl\{\mathbb{P}_{X\sim N_{p\times n}(\theta_1+\theta_2,I)}(X\in A_n)e^{\iprod{\theta_1}{\theta_2}}\bigr\}\indc{k=l}\bigr] \nonumber \\
    &=\mathbb{E}_{k,l}\bigl[\mathbb{E}_{S,u,T,v}\bigl\{\mathbb{P}_{X\sim N_{p\times n}(\theta_1+\theta_2,I)}(X\in A_n)e^{\iprod{\theta_1}{\theta_2}}\indc{\norm{\tilde u+\tilde v}^2\leq (2-\epsilon_2)s}\bigr\}\indc{k=l}\bigr] \nonumber \\
    &\hspace{.5cm}+\mathbb{E}_{k,l}\bigl[\mathbb{E}_{S,u,T,v}\bigl\{\mathbb{P}_{X\sim N_{p\times n}(\theta_1+\theta_2,I)}(X\in A_n)e^{\iprod{\theta_1}{\theta_2}}\indc{\norm{\tilde u+\tilde v}^2> (2-\epsilon_2)s}\bigr\}\indc{k=l}\bigr].
    \end{align}
    We first deal with the second term. When $k=l$, we let $t_0=2^k=2^l$. When $\norm{\tilde u+\tilde v}^2>(2-\epsilon_2)s$, the noncentrality parameter of $\norm{X_1+\dots+X_{t_0}}^2/t_0$ is 
    \[\beta^2\norm{\tilde u+\tilde v}^2> (2-\epsilon)(2-\epsilon_2)\sqrt{p\log\log n}.\]
    By Lemma~\ref{lem:noncentral-chi-square-tail},  on the event that $k=l$ and $\norm{\tilde u+\tilde v}^2>(2-\epsilon_2)s$, we have
    \begin{align*}
    \mathbb{P}_{X\sim N_{p\times n}(\theta_1+\theta_2,I)}(X\in A_n) &\leq \mathbb{P}\left(\chi^2_{p,(2-\epsilon)(2-\epsilon_2)\sqrt{p\log\log n}}\leq p+(2+\epsilon_1)\sqrt{p\log\log n}\right)\\
    &\leq \exp\biggl\{-(1+o(1))\left(\frac{(2-\epsilon)(2-\epsilon_2)-(2+\epsilon_1)}{2}\right)^2\log\log n\biggr\}\\
    &=\left(\frac{1}{\log n}\right)^{(1+o(1))\left(\frac{(2-\epsilon)(2-\epsilon_2)-(2+\epsilon_1)}{2}\right)^2}.
    \end{align*}
    In addition, we also have
    $$\mathbb{P}(k=l)=\frac{1}{\floor{\log_2(\sqrt{n})/\floor{c\log\log\log n}} +1}.$$
    Therefore,
    \begin{align*}
      \mathbb{E}_{k,l}\bigl[\mathbb{E}_{S,u,T,v}\bigl\{&\mathbb{P}_{X\sim N_{p\times n}(\theta_1+\theta_2,I)}(X\in A_n)e^{\iprod{\theta_1}{\theta_2}}\indc{\norm{\tilde u+\tilde v}^2> (2-\epsilon_2)s}\bigr\}\indc{k=l}\bigr] \\
    &\leq \biggl(\frac{1}{\log n}\biggr)^{(1+o(1))\left(\frac{(2-\epsilon)(2-\epsilon_2)-(2+\epsilon_1)}{2}\right)^2}\exp\biggl(\frac{s^2\beta^4}{2p}e^{\beta^4}\biggr)\mathbb{P}(k=l) \\
    &= \frac{\left(\log n\right)^{(1+o(1))\frac{(2-\epsilon)^2}{2}}}{\left(\log n\right)^{(1+o(1))\left[\left(\frac{(2-\epsilon)(2-\epsilon_2)-(2+\epsilon_1)}{2}\right)^2+1\right]}}.
    \end{align*}
    By choosing $\epsilon_1,\epsilon_2 > 0$ sufficiently small, we can ensure that the power of the $\log n$ factor in the denominator is arbitrarily close to $2-2\epsilon+\epsilon^2$, while the power of the $\log n$ factor in the numerator is arbitrarily close to $2-2\epsilon+\epsilon^2/2$. We deduce that
    \[
      \mathbb{E}_{k,l}\bigl[\mathbb{E}_{S,u,T,v}\bigl\{\mathbb{P}_{X\sim N_{p\times n}(\theta_1+\theta_2,I)}(X\in A_n)e^{\iprod{\theta_1}{\theta_2}}\indc{\norm{\tilde u+\tilde v}^2> (2-\epsilon_2)s}\bigr\}\indc{k=l}\bigr] \rightarrow 0.
  \]
Finally, we analyse the first term on the right-hand side of~\eqref{Eq:DelicateDecomp}.  
%   $$\mathbb{E}_{k,l}\mathbb{E}_{S,u,T,v}\mathbb{P}_{X\sim N_{p\times n}(\theta_1+\theta_2,I)}(X\in A_n)\exp\left(\iprod{\theta_1}{\theta_2}\right)\indc{k=l}\indc{\norm{\tilde u+\tilde v}^2\leq (2-\epsilon_2)s}.$$
   By the Cauchy--Schwarz inequality, we first obtain the bound
   \begin{align*}
     \mathbb{E}_{k,l}\bigl[\mathbb{E}_{S,u,T,v}\bigl\{\mathbb{P}_{X\sim N_{p\times n}(\theta_1+\theta_2,I)}&(X\in A_n)e^{\iprod{\theta_1}{\theta_2}}\indc{\norm{\tilde u+\tilde v}^2\leq (2-\epsilon_2)s}\bigr\}\indc{k=l}\bigr] \\
     &\leq \bigl(\mathbb{E} e^{2\iprod{\theta_1}{\theta_2}}\bigr)^{1/2}\sqrt{\mathbb{P}\bigl(\norm{\tilde u+\tilde v}^2\leq (2-\epsilon_2)s\bigr)}.
     \end{align*}
 Now, by construction,
 $$\mathbb{E} e^{2\iprod{\theta_1}{\theta_2}}\leq \mathbb{E}e^{\fnorm{\theta_1}^2+\fnorm{\theta_2}^2}\leq e^{2s\beta^2}=e^{2(2-\epsilon)\sqrt{p\log\log n}}.$$
    To bound $\mathbb{P}\left(\norm{\tilde u+\tilde v}^2\leq (2-\epsilon_2)s\right)$, we condition on the event that $|S\cap T|=a \leq s$ and $|S\setminus T|=|T\setminus S|=s-a$. Denoting the distribution of $a$ by $r$, we then have by Hoeffding's inequality that 
    \begin{align*}
    \mathbb{P}\left(\norm{\tilde u+\tilde v}^2\leq (2-\epsilon_2)s\right) &= \mathbb{E}_{a\sim r}\mathbb{P}\bigl(2(s-a)+4\cdot\mathrm{Bin}(a,1/2)\leq (2-\epsilon_2)s\bigm|a\bigr)\\
                                                                          &\leq \max_{a\leq s}\mathbb{P}\biggl(|\mathrm{Bin}(a,1/2)-a/2|\geq \frac{\epsilon_2}{4}s\biggm|a\biggr) \\
      &\leq \max_{a\leq s} 2e^{-\frac{\epsilon_2^2s^2}{8a}}=2e^{-\epsilon_2^2s/8}.
    \end{align*}
    Therefore,
    \begin{align*}
      \mathbb{E}_{k,l}\bigl[\mathbb{E}_{S,u,T,v}\bigl\{\mathbb{P}_{X\sim N_{p\times n}(\theta_1+\theta_2,I)}(X\in A_n)e^{\iprod{\theta_1}{\theta_2}}&\indc{\norm{\tilde u+\tilde v}^2 \leq (2-\epsilon_2)s}\bigr\}\indc{k=l}\bigr] \\
      &\leq 2e^{2(2-\epsilon)\sqrt{p\log\log n}-\epsilon_2^2s/8}\rightarrow 0.
      \end{align*}
This verifies Condition~2 and hence concludes our lower bound calculation.
  \end{proof}

\begin{proof}[Proof of Theorem \ref{thm:asymp-dense} (upper bound)]
	For the upper bound, define
	\begin{equation}
	\widetilde Y_t:= \sqrt{\frac{t(n-t)}{n}}\left(\frac{1}{t}(X_1+\ldots+X_t)-\frac{1}{n-t}(X_{t+1}+\ldots+X_n)\right).\label{eq:tilde-Y}
	\end{equation}
	Assume that $\rho^2\geq 2\sqrt{(1+\epsilon)p\log\log n}$ for some constant $\epsilon \in (0,1]$.  For $\delta_1, \delta_2 > 0$, consider the test 
	\[\psi:=\mathbbm{1}_{\left\{\max_{t\in\mathcal{T}_{\delta_2}}\|\widetilde Y_t\|_2^2-p\geq 2\sqrt{(1+\delta_1)p\log\log n}\right\}},\]
	where $\mathcal{T}_{\delta_2} :=\mathcal{T}_{\delta_2}^1\cup \mathcal{T}_{\delta_2}^2$ with
	\[\mathcal{T}_{\delta_2}^1:=\left\{1,\floor{1+\delta_2},\floor{(1+\delta_2)^2},\ldots,\floor{(1+\delta_2)^{\floor{\log_{1+\delta_2}(n/2)}}}\right\},\]
	and $\mathcal{T}_{\delta_2}^2=\{n-t:t\in\mathcal{T}_{\delta_2}^1\}$.
	Under the null hypothesis, for a fixed $t$, we have $\norm{\widetilde Y_t}_2^2\sim \chi^2_p$. Therefore, by Lemma \ref{lem:chi-square-tail} a union bound argument, we have
	\begin{align*}
	\sup_{\theta \in \Theta_0(p,n)} \mathbb{P}_{\theta}\biggl(\max_{t\in\mathcal{T}_{\delta_2}}\|\widetilde Y_t\|^2-p & \geq 2\sqrt{(1+\delta_1)p\log\log n}\biggr)\\
	&\leq 2\floor{\log_{1+\delta_2}(n/2)}e^{-(1+o(1))(1+\delta_1)\log\log n}\rightarrow 0.
	\end{align*}
	Under the alternative hypothesis, for $\theta\in\Theta(p,n,s,\rho)$, assume that the true changepoint is at~$t_0$ and that the mean vector before and after the changepoint is $\mu_1$ and $\mu_2$, respectively. Assume $t_0\leq n/2$ without loss of generality. Let $\tilde{t}_0 = \tilde{t}_0(\theta)$ be the closest point in $\mathcal{T}_{\delta_2}$ to $t_0$ such that $\tilde {t}_0\leq t_0$. Then $\tilde{t}_0\leq t_0\leq (1+\delta_2)\tilde{t}_0$. By the assumption that $\rho^2\geq 2\sqrt{(1+\epsilon)p\log\log n}$, we have $\norm{\widetilde Y_{\tilde{t}_0}}_2^2\sim \chi^2_{p,\lambda}$, where 
	\begin{align}
       \nonumber   \lambda = \frac{\tilde{t}_0(n-\tilde{t}_0)}{n}\biggl\|\frac{n-t_0}{n-\tilde{t}_0}(\mu_1-\mu_2)\biggr\|^2 &\geq \frac{\tilde{t}_0(n-t_0)}{n}\norm{\mu_1-\mu_2}^2 \geq \frac{\tilde{t}_0}{t_0}\rho^2 \\
      \label{eq:sig-cons-5-6}                                                                                                            &\geq \frac{\rho^2}{1+\delta_2} \geq 2\sqrt{(1+ \epsilon/2)p\log\log n}
          \end{align}
if we choose $\delta_2 = \epsilon/8$. By Lemma \ref{lem:noncentral-chi-square-tail}, we therefore have
	\begin{align*}
          \sup_{\theta \in \Theta(p,n,s,\rho)} \mathbb{P}_{\theta}\left(\|\widetilde Y_{\tilde {t}_0}\|^2-p\leq 2\sqrt{(1+\delta_1)p\log\log n}\right) &\leq e^{-(1+o(1))(\sqrt{1+\epsilon/2}-\sqrt{1+\delta_1})^2\log\log n} \\
          &\rightarrow 0,
	\end{align*}
as long as $\delta_1 \in (0,\epsilon/2)$.  This completes the proof of the upper bound.
\end{proof}

%The proof of Theorem \ref{thm:asymp-sparse} is essentially tightening the arguments in the proofs of Proposition~\ref{thm:upper} and Proposition~\ref{thm:lower}. 

\begin{proof}[Proof of Theorem \ref{thm:asymp-sparse} (lower bound)]
	As in the proof of Theorem \ref{thm:asymp-dense}, we will allow $n$ to be sufficiently large. Let $f_{0n} := \phi$ be the density function of the standard normal distribution on $\mathbb{R}^{p\times n}$. Define $f_{1n}(x) := \int_{\supp(\nu_n)} \phi(x-\theta) \, d\nu_n(\theta)$, where $\nu_n$ is the distribution of $\theta$ when $\theta$ is generated according to the following sampling process:
	\begin{enumerate}
		\item Uniformly sample a subset $S \subseteq [p]$ of cardinality $s$;
		\item Independently of $S$, generate $k \sim \mathrm{Unif}(\mathcal{G}_n)$ where 
		\begin{align*}
		\mathcal{G}_n := \bigl\{0,&\floor{\log\log\log n},2\floor{\log\log\log n},\ldots, \\
		&\floor{\log\log\log n}\floor{\log_2(\sqrt{n})/\floor{\log\log\log n}}\bigr\};
		\end{align*}
		\item Given $(S,k)$ sampled in the previous steps, define $\theta_{j\ell} :=\frac{\beta}{\sqrt{2^k}}$ for all $(j,\ell)\in S\times [2^k]$ and $\theta_{j\ell}:=0$ otherwise, where $\beta^2 = (1-\epsilon)\log\left(\frac{p\log\log n}{s^2}\right)$ for some $\epsilon \in (0,1)$.
	\end{enumerate}
Similar to our previous arguments, we let $(S,k)$ and $(T,l)$ be independent with distribution $\nu_n$.  Since $\frac{2^k(n-2^k)}{n}\cdot {\frac{\beta^2}{2^k}|S|} \geq \frac{n-\sqrt{n}}{n}(1-\epsilon)s\log\left(\frac{p\log\log n}{s^2}\right)$, we have $\supp(\nu_n)\subseteq\Theta(p,n,s,\rho)$ with $\rho^2=(1-2\epsilon)s\log\left(\frac{p\log\log n}{s^2}\right)$. Therefore, by Lemmas~\ref{lem:lower} and~\ref{lem:chi-square-cal}, it is sufficient to show 
  \[\limsup_{n\rightarrow\infty}\mathbb{E}_{(\theta_1,\theta_2)\sim \nu_n\otimes \nu_n} \exp\left(\iprod{\theta_1}{\theta_2}\right)\leq 1.\]
By the same calculation that leads to (\ref{eq:no-where}), we have
    \[\mathbb{E}_{(\theta_1,\theta_2)\sim \nu_n\otimes \nu_n} \bigl\{\exp\left(\iprod{\theta_1}{\theta_2}\right)\bigr\}\leq \mathbb{E}_{(l,k)}\biggl\{\exp\left(\frac{s^2}{p}e^{\beta^2/2^{|l-k|/2}}\right)\biggr\}.\]
    We split the right hand side above into two terms according to whether or not $l=k$. When $l\neq k$, we have $|l-k|\geq \floor{\log\log\log n}$ due to the definition of $\nu_n$. Therefore,
    \begin{align*}
    \mathbb{E}_{(l,k)}\biggl\{\exp\biggl(\frac{s^2}{p}e^{\beta^2/2^{|l-k|/2}}\biggr)\mathbbm{1}_{\{l\neq k\}}\biggr\} \leq \exp\Biggl(\frac{s^2}{p}\biggl(\frac{p\log\log n}{s^2}\biggr)^{1/2^{\floor{\log\log\log n}/2}}\Biggr)\rightarrow 1,
    \end{align*}
	where we have used $s^2/p\rightarrow 0$ and $(\log\log n)^{1/\log\log n}\rightarrow 1$. When $l=k$, we have
	\begin{align*}
	\mathbb{E}_{(l,k)}\biggl\{\exp\left(\frac{s^2}{p}e^{\beta^2/2^{|l-k|/2}}\right)\mathbbm{1}_{\{l= k\}}\biggr\} \leq \frac{1}{\floor{\log_2(\sqrt{n})/\floor{\log\log\log n}}}\exp\left(\frac{s^2}{p}e^{\beta^2}\right)\rightarrow 0,
	\end{align*}
	according to the definition of $\beta$. Combining the two bounds, we have obtained the desired conclusion.
\end{proof}

\begin{proof}[Proof of Theorem \ref{thm:asymp-sparse} (upper bound)]
Consider the statistic
$$\wt{A}_{t,a}=\sum_{j=1}^p\bigl(\wt{Y}_t^2(j)-\nu_a\bigr)\mathbbm{1}_{\{|\wt{Y}_t(j)|\geq a\}},$$ where the definition of $\wt{Y}_t$ is given by (\ref{eq:tilde-Y}). Recall the definition of $\mathcal{T}_{\delta_2}$ in the upper bound proof of Theorem \ref{thm:asymp-dense}. We then consider the testing procedure
$$\tilde{\psi}:=\mathbbm{1}_{\left\{\max_{t\in\mathcal{T}_{\delta_2}}\wt{A}_{t,a}\geq C^*\left(\sqrt{pe^{-a^2/2}2\log\log n}+2\log\log n\right)\right\}},$$
where $a=\sqrt{2\log(\frac{p\log\log n}{s^2})}$, and the constant $C^*>0$ is taken from Lemma~\ref{lem:trunc-chi-square-tail}.
    Under the null hypothesis, i.e. for any $\theta\in \Theta_0(p,n)$, by Lemma~\ref{lem:trunc-chi-square-tail} and a union bound argument, we have
    \begin{align*}
    \sup_{\theta\in \Theta_0(p,n)}\mathbb{E}_\theta\tilde{\psi}\leq 2\left(\floor{\log_{1+\delta_2}(n/2)}+1\right)\exp(-2\log\log n)\rightarrow 0.
    \end{align*}

    Next, we study $\mathbb{E}_\theta(1-\tilde{\psi})$ under the alternative hypothesis. For $\theta\in\Theta(p,n,s,\rho)$ with $\rho$ satisfying $\rho^2\geq 2(1+\epsilon)s\log\left(\frac{p\log\log n}{s^2}\right)$, we denote the changepoint of $\theta$ by $t_0$, and the mean vector before and after the changepoint by $\mu_1$ and $\mu_2$, respectively. Assume that $t_0\leq n/2$ without loss of generality. Let $\wt{t}_0=\wt{t}_0(\theta)$ be the closest point in $\mathcal{T}_{\delta_2}$ to $t_0$ such that $\wt{t}_0\leq t_0$. Then we have $\wt{t}_0\leq t_0\leq (1+\delta_1)\wt{t}_0$.  Write $\Delta_j := \mathbb{E}\bigl\{\wt{Y}_{\wt{t}_0}(j)\bigr\}$ as shorthand.   By the assumption that $\rho^2\geq 2(1+\epsilon)s\log\left(\frac{p\log\log n}{s^2}\right)$ and the same argument we have used in (\ref{eq:sig-cons-5-6}), we have
$$\sum_{j=1}^p\Delta_j^2\geq\frac{\rho^2}{1+\delta_2}\geq 2(1+\epsilon/2)s\log\left(\frac{p\log\log n}{s^2}\right),$$
by choosing $\delta_2=\epsilon/8$. By (\ref{eq:alt-sig-sharp}) of Lemma~\ref{lem:trunc-chi-square-mean}, for any $\delta_1>0$, there exists $c_1^*=c_1^*(\delta_1)>0$ such that
\begin{align*}
\mathbb{E}_{\theta}\wt{A}_{\wt{t}_0,a} &\geq c_1^*\sum_{j=1}^p\Delta_j^2\mathbbm{1}_{\{|\Delta_j|\geq (1+\delta_1)a\}} \\
&= c_1^*\biggl(\sum_{j=1}^p\Delta_j^2-\sum_{j=1}^p\Delta_j^2\mathbbm{1}_{\{|\Delta_j|< (1+\delta_1)a\}}\biggr) \\
&\geq c_1^*\biggl\{2(1+\epsilon/2)s\log\left(\frac{p\log\log n}{s^2}\right)-(1+\delta_1)^2sa^2\biggr\} \\
&= \frac{c_1^*\epsilon}{2}s\log\left(\frac{p\log\log n}{s^2}\right),
\end{align*}
by choosing $\delta_1$ such that $2(1+\delta_1)^2 = 1 + \epsilon/2$.
For the variance, due to Lemma~\ref{lem:trunc-chi-square-var}, we have
	\[\Var_{\theta}\bigl(\wt{A}_{\wt{t}_0,a}\bigr)=O\left(pa^3e^{-a^2/2}+sa^4\right).\]
Hence, following the same argument used in (\ref{eq:usedlater-2}), we have
\begin{align*}
\sup_{\theta\in\Theta(p,n,s,\rho)}\mathbb{E}_\theta(1-\tilde{\psi}) &\leq \sup_{\theta\in\Theta(p,n,s,\rho)}\mathbb{P}_\theta\biggl(\wt{A}_{\wt{t}_0,a}< C^*\Bigl(\sqrt{pe^{-a^2/2}2\log\log n}+2\log\log n\Bigr)\biggr) \\
&\rightarrow 0,
\end{align*}
which leads to the desired conclusion.
\end{proof}

\subsection{Proofs of results in Section \ref{sec:spatial}}

%In this section, we give proofs of Theorem \ref{thm:minimax-spatial}, Proposition \ref{prop:est-cov}, Corollary \ref{cor:upper-cov-spatial}, Theorem \ref{thm:sparse-upper-spatial} and Theorem \ref{thm:sparse-lower-spatial}.

\begin{proof}[Proof of Theorem \ref{thm:minimax-spatial}]
	For any $\theta\in\Theta(p,n,p,\rho)$, there exist $\mu_1,\mu_2\in\mathbb{R}^p$ and $t\in[n]$, such that $X_1,\ldots,X_t\stackrel{\mathrm{iid}}{\sim} N_p(\mu_1,\Sigma)$ and $X_{t+1},\ldots,X_n\stackrel{\mathrm{iid}}{\sim} N_p(\mu_2,\Sigma)$. The covariance matrix $\Sigma$ admits the eigenvalue decomposition $\Sigma = U\Lambda U^T$ for some orthogonal $U\in\mathbb{R}^{p\times p}$ and $\Lambda = \mathrm{diag}(\lambda) \in \mathbb{R}^{p \times p}$, where $\lambda := (\lambda_1,\ldots,\lambda_p)^T$ and $\lambda_1 \geq \cdots \geq \lambda_p > 0$. Then $U^TX_1,\ldots,U^TX_t\stackrel{\mathrm{iid}}{\sim} N_p(U^T\mu_1,\Lambda)$ and $U^TX_{t+1},\ldots,U^TX_n\stackrel{\mathrm{iid}}{\sim} N_p(U^T\mu_2,\Lambda)$. Moreover, $\|U^T(\mu_1-\mu_2)\|=\|\mu_1-\mu_2\|$, so we can consider a diagonal $\Sigma$ without loss of generality. From now on, we assume that $\Sigma=\Lambda$.
	
	We first derive the upper bound. Consider the testing procedure
	$$\psi=\mathbbm{1}_{\{\max_{t\in\mathcal{T}}\|Y_t\|^2-\Tr(\Sigma) > r\}},$$
	with $r=C\left(\sqrt{\fnorm{\Sigma}^2\log\log(8 n)}+\opnorm{\Sigma}\log\log(8n)\right)$ for some appropriate $C>0$. Then the same argument in the proof of Proposition~\ref{thm:upper} together with Lemma \ref{lem:chi-square-tail} leads to the desired result.
	
	We now derive the lower bound. We first seek to apply Lemmas~\ref{lem:lower} and~\ref{lem:chi-square-cal} and given $\eta > 0$, find a probability measure $\nu$ with $\supp(\mu)\subseteq\Theta(p,n,p,\rho)$ and a universal constant $c > 0$ such that
	\begin{equation}
	\mathbb{E}_{(\theta_1,\theta_2)\sim \nu\otimes\nu}\exp(\iprod{\theta_1}{\theta_2}_{\Sigma^{-1}})\leq 1+\eta,\label{eq:lower-spatial-goal}
	\end{equation}
	whenever $\rho = c\rho_{\Sigma}^*$.  We define $\nu$ to be the distribution of $\theta = (\theta_{j\ell}) \in \Theta(p,n,p,\rho)$, sampled according to the following process:
	\begin{enumerate}
		\item Uniformly sample $k\in\{0,1,2,\ldots,\floor{\log_2(n/2)}\}$;
		\item Independently of $k$, sample $u = (u_1,\ldots,u_p)^T \in\mathbb{R}^p$ with independent coordinates, and with $u_j \sim \text{Unif}(\{-a_j,a_j\})$ for $j\in[p]$;
		\item Given $(k,u)$ sampled in the previous steps, define $\theta_{j\ell}:= 2^{-k/2}u_j$ for all $(j,\ell)\in [p]\times [2^k]$ and $\theta_{j\ell}:=0$ otherwise.
	\end{enumerate}
	If $\theta\sim \nu$, then $\theta\in\Theta(p,n,p,\rho)$ with $\rho^2=\frac{1}{2}\sum_{j=1}^pa_j^2$. Suppose that we independently sample $(k,u)$ and $(l,v)$ from the first two steps and use these to construct $\theta_1$ and $\theta_2$ respectively according to the third step. Then, by direct calculation, we obtain
	$$\iprod{\theta_1}{\theta_2}_{\Sigma^{-1}}= (2^k\wedge 2^l)\frac{1}{\sqrt{2^{k+l}}}\sum_{j=1}^p\frac{u_jv_j}{\lambda_j}=\frac{1}{2^{|k-l|/2}}\sum_{j=1}^p\frac{u_jv_j}{\lambda_j}.$$
	Observe that $u_jv_j\sim \text{Unif}(\{-a_j^2,a_j^2\})$, so
	\begin{align*}
	\mathbb{E}_{(\theta_1,\theta_2)\sim \nu\otimes\nu}&\exp(\iprod{\theta_1}{\theta_2}_{\Sigma^{-1}}) = \mathbb{E}\exp\biggl(\frac{1}{2^{|k-l|/2}}\sum_{j=1}^p\frac{u_jv_j}{\lambda_j}\biggr) \\
	&= \mathbb{E}\prod_{j=1}^p\biggl\{\frac{1}{2}\exp\biggl(\frac{a_j^2}{2^{|k-l|/2}\lambda_j}\biggr) +\frac{1}{2}\exp\biggl(-\frac{a_j^2}{2^{|k-l|/2}\lambda_j}\biggr)\biggr\} \\
	&\leq \mathbb{E}\exp\biggl(\frac{1}{2^{|k-l|}}\sum_{j=1}^p\frac{a_j^4}{\lambda_j^2}\biggr),
	\end{align*}
	where the last inequality above uses the fact that $(e^x+e^{-x})/2 \leq e^{x^2}$. We take $a_j^2= \sqrt{c_1\frac{\lambda_j^4\log\log(8n)}{\|\lambda\|^2}}$ for some sufficiently small $c_1 = c_1(\eta) >0$. Then it can be shown that
	\begin{align*}
	\mathbb{E}\exp&\biggl(\frac{1}{2^{|k-l|}}\sum_{j=1}^p\frac{a_j^4}{\lambda_j^2}\biggr) = \mathbb{E}\exp\left(c_1\frac{\log\log(8n)}{2^{|k-l|}}\right) \leq 1 + \eta
	%&= \mathbb{E}\biggl\{\exp\left(c_1\frac{\log\log(8n)}{2^{|k-l|}}\right)\mathbbm{1}_{\{|k-l|\leq\log\log(8n)\}}\biggr\}  + \mathbb{E}\biggl\{\exp\left(c_1\frac{\log\log(8n)}{2^{|k-l|}}\right)\mathbbm{1}_{\{|k-l|>\log\log(8n)\}}\biggr\} \\
	%&\leq \log^{c_1}(8n)\frac{2\log\log(8n)}{1 + \lfloor \log_2(n/2) \rfloor} + \exp\biggl(c_1\frac{\log\log(8n)}{2^{\log\log(8n)}}\biggr)
	%&\lesssim \log^{c_1}(8n)\frac{\log\log(8n)}{\log(en)} + \exp\left(C_2\frac{\log\log(8n)}{\log(en)}\right) = O(1),
	\end{align*}
	% as long as $c_1<1$.
	using very similar arguments to those employed in the proof of Proposition~\ref{thm:upper}.  We have therefore established~\eqref{eq:lower-spatial-goal}, which implies the desired lower bound $\rho^2=\frac{1}{2}\sum_{j=1}^pa_j^2\asymp \sqrt{\fnorm{\Sigma}^2\log\log(8n)}$.
	
	We also need to prove the lower bound $\opnorm{\Sigma}\log\log(8n)$. Recall that we have assumed without loss of generality that $\Sigma$ is diagonal with non-increasing diagonal elements. Then in our definition of the parameter space $\Theta^{(t_0)}(p,n,s,\rho)$, if we restrict $\mu_1$ and $\mu_2$ to agree in all coordinates except perhaps the first, then the testing problem is equivalent to testing between $\Theta_0(1,n)$ and $\Theta(1,n,1,\rho)$ with variance $\lambda_1=\opnorm{\Sigma}$. Therefore, the lower bound construction in \cite{gao2017minimax} directly applies here and we obtain the desired rate $\opnorm{\Sigma}\log\log(8n)$.
\end{proof}

\begin{proof}[Proof of Proposition \ref{prop:est-cov}]
	Suppose the index set $\mathcal{D}$ does not include the changepoint. Then, by Lemma~\ref{lem:sample-cov-tail}, we have that for every $x > 0$,
	\begin{equation}
	|\Tr(\wh{\Sigma}_{\mathcal{D}})-\Tr(\Sigma)| \leq 4\biggl(\frac{\sqrt{x}\fnorm{\Sigma}}{\sqrt{n}}+\frac{x\opnorm{\Sigma}}{n}\biggr),\label{eq:good-trace}
	\end{equation}
	with probability at least $1-2e^{-x}$ (notice that substituting $n$ for $n-1$ means we multiply the right-hand side by at most $2$).  We will take $x=p\log(32/\epsilon)$, which guarantees that $e^{-x} \leq \epsilon/32$. Moreover, there exists a universal constant $\widetilde{C}>0$, such that for all $x\geq 1$
	\begin{equation}
	\label{eq:good-op}
	\opnorm{\wh{\Sigma}_{\mathcal{D}}-\Sigma}\leq \widetilde{C}\opnorm{\Sigma}\brac{\sqrt{\frac{p}{n}}\vee \frac{p}{n}\vee \sqrt{\frac{x}{n}}\vee \frac{x}{n}},
	\end{equation}
	with probability at least $1-e^{-x}$ \citep[][Theorem~1]{koltchinskii2017concentration}.  Here we will take $x=p\log(16/\epsilon)$. From this we immediately have the error bounds for $\fnorm{\wh{\Sigma}_{\mathcal{D}}}$ and $\opnorm{\wh{\Sigma}_{\mathcal{D}}}$, because
	$$\bigl|\opnorm{\wh{\Sigma}_{\mathcal{D}}}-\opnorm{\Sigma}\bigr|\leq \opnorm{\wh{\Sigma}_{\mathcal{D}}-\Sigma},$$
	and
	$$\bigl|\fnorm{\wh{\Sigma}_{\mathcal{D}}}-\fnorm{\Sigma}\bigr|\leq \fnorm{\wh{\Sigma}_{\mathcal{D}}-\Sigma}\leq\sqrt{p}\opnorm{\wh{\Sigma}_{\mathcal{D}}-\Sigma}.$$
	Since there is only one changepoint, there exists an event of probability at least $1-\epsilon/8$ on which at least two blocks among $\mathcal{D}_1,\mathcal{D}_2,\mathcal{D}_3$ satisfy~\eqref{eq:good-trace}, and an event of probability at least $1 - \epsilon/8$ on which at least two blocks satisfy~\eqref{eq:good-op}. %Moreover, by the definition of $\Tr(\wh{\Sigma})^{(2)}$, we have
	%\begin{equation}
	%|\Tr(\wh{\Sigma})^{(2)}-\Tr(\Sigma)| \leq |\Tr(\wh{\Sigma})^{(1)}-\Tr(\Sigma)|\vee|\Tr(\wh{\Sigma})^{(3)}-\Tr(\Sigma)|. \label{eq:median-property}
	% \end{equation}
	The result therefore follows on taking $C = 4\log(32/\epsilon) + \widetilde{C}(c^{1/2} \vee 1)\log(16/\epsilon)$. 
	%Thus, there exists $C > 0$, depending only on $\epsilon$, such that on this event,
	%\begin{align*}
	%  \blue{|\Tr(\wh{\Sigma})^{(2)}-\Tr(\Sigma)| \leq 4\biggl(\frac{\sqrt{x}\fnorm{\Sigma}}{\sqrt{n}}+\frac{x\opnorm{\Sigma}}{n}\biggr).} \\
	%  \red{|\Tr(\wh{\Sigma})^{(2)}-\Tr(\Sigma)| &\leq C\biggl(\frac{\sqrt{p}\fnorm{\Sigma}}{\sqrt{n}}+\frac{p\opnorm{\Sigma}}{n}\biggr).} \\
	%  \red{|\Tr(\wh{\Sigma})^{(2)}-\Tr(\Sigma)| &\leq C\biggl(\frac{\sqrt{p}\fnorm{\Sigma}}{\sqrt{n}}+\frac{p\opnorm{\Sigma}}{n}\biggr).}
	%  \red{|\Tr(\wh{\Sigma})^{(2)}-\Tr(\Sigma)| &\leq C\biggl(\frac{\sqrt{p}\fnorm{\Sigma}}{\sqrt{n}}+\frac{p\opnorm{\Sigma}}{n}\biggr).}
	%\end{align*}
	%Therefore, we can either apply (\ref{eq:good-trace}) directly to $|\Tr(\wh{\Sigma})^{(2)}-\Tr(\Sigma)|$ or through (\ref{eq:median-property}).
	%A very similar argument also applies to $\fnorm{\wh{\Sigma}}^{(2)}$, $\opnorm{\wh{\Sigma}}^{(2)}$, where now $C$ needs to depend on both $c$ and $\epsilon$. These bounds together with our choice of $x$ concludes the proof.
\end{proof}

\begin{proof}[Proof of Corollary \ref{cor:upper-cov-spatial}]
	Define a set of good events
	\begin{align*}
	G &:= \Big\{ \bigl|\Tr(\wh{\Sigma})^{(2)}- \Tr(\Sigma)\bigr| \leq \brac{\fnorm{\Sigma} + \opnorm{\Sigma}}/4, \\
	&\hspace{2cm} \bigl|\fnorm{\wh{\Sigma}}^{(2)}-\fnorm{\Sigma}\bigr| \leq \fnorm{\Sigma}/4,  \quad \bigl|\opnorm{\wh{\Sigma}}^{(2)}-\opnorm{\Sigma}\bigr| \leq \opnorm{\Sigma}/4\Big\}.
	\end{align*}
	As a direct application of Proposition~\ref{prop:est-cov}, given $\epsilon > 0$, there exists $c > 0$, depending only on $A$ and~$\epsilon$, such that $\mathbb{P}_{\theta}(G^c)\leq \epsilon/4$ for any $\theta \in \Theta(p,n,p,0)$.  Hence, for $\theta\in\Theta_0(p,n)$, when $C\geq 1$, we have
	%we can bound $\mathbb{P}_{\theta}$ by $\mathbb{P}_{\theta}\psi \leq \mathbb{P}_{\theta}\psi\indc{G}+\mathbb{P}_{\theta}(G^c)$. Note that on the set $G$, we have
	\begin{align*}
	\mathbb{E}_{\theta}\psi_{\mathrm{Cov}} 
	&\leq \mathbb{P}_\theta\biggl(\biggl\{\max_{t\in\mathcal{T}}\|Y_t\|^2\!-\!\Tr(\wh{\Sigma})^{(2)} \\
	&\hspace{1.5cm}> C\Bigl(\fnorm{\wh{\Sigma}}^{(2)}\sqrt{\log\log(8n)} + \opnorm{\wh{\Sigma}}^{(2)}\log\log(8n)\Bigr)\biggr\} \bigcap G\biggr) 
	 + \mathbb{P}_\theta(G^c) \\
	&\leq \mathbb{P}_\theta\biggl(\max_{t\in\mathcal{T}}\|Y_t\|^2\!-\!\Tr(\Sigma) \! > \! \frac{C}{2}\Bigl(\fnorm{\Sigma}\sqrt{\log\log(8n)} \! + \!  \opnorm{\Sigma}\log\log(8n)\Bigr)\biggr) \! + \! \frac{\epsilon}{4}.%e^{-C'p}+2e^{-\red{C'}n}.
	%\mathbbm{1}_{\{\max_{t\in\mathcal{T}}\|Y_t\|^2-\Tr({\Sigma}) > (C-2C_1)\left(\fnorm{{\Sigma}}\sqrt{\log\log(8n)} + \opnorm{{\Sigma}}\log\log(8n)\right)\}}.
	\end{align*}
	Therefore, by Theorem~\ref{thm:minimax-spatial}, we can choose $C = C(\epsilon) \geq 1$ large enough that the error under the null is at most $\epsilon/2$.  A very similar argument also applies to $\mathbb{E}_{\theta}(1-\psi_{\mathrm{Cov}})$ for $\theta\in\Theta(p,n,p,\rho)$ with $\rho > 0$: when $\rho^2 \geq 64C\Bigl(\fnorm{\Sigma}\sqrt{\log\log(8n)} \vee \opnorm{\Sigma}\log\log(8n)\Bigr)$ and after increasing $C=C(\epsilon)$ if necessary, the error under the alternative is at most $\epsilon/2$, as required.
\end{proof}

\begin{proof}[Proof of Theorem \ref{thm:sparse-upper-spatial}]
	%Since $X_i\sim N(\theta_i,\Sigma(\gamma))$, we can write
	%$$Y_t(j) = \Delta_t(j) + \sqrt{\gamma}W_t + \sqrt{1-\gamma}Z_{tj},$$
	%where $W_{t},Z_{t1},...,Z_{tp}\stackrel{iid}{\sim} N(0,1)$ and $\Delta_t=\frac{(\theta_1+...+\theta_t)-(\theta_{n-t+1}+...+\theta_n)}{\sqrt{2t}}$. Note that $\Delta_t$ has at most $s$ nonzero coordinates under both null and alternative.
	Recalling the representation of $Y_t(j)$ in~\eqref{Eq:Ytj}, we define an oracle version of $\wt{Y}_t$ in~\eqref{eq:def-tilde-Y} by
	$$\bar{Y}_t := \frac{Y_t-\sqrt{\gamma}W_t\mathbf{1}_p}{\sqrt{1-\gamma}}.$$
	Then 
	\begin{equation}
	\|\wt{Y}_t-\bar{Y}_t\|_{\infty}=\frac{|\text{\sf Median}(Y_t)-\sqrt{\gamma}W_t|}{\sqrt{1-\gamma}}.\label{eq:tilde-Y-bar}
	\end{equation}
	By Lemma \ref{lem:median}, there exist universal constants $C_1,C_2,C_3 > 0$ such that for any $\theta\in\Theta(p,n,s,0)$, we have
	$$\mathbb{P}_{\theta}\biggl\{\frac{|\text{\sf Median}(Y_t)-\sqrt{\gamma}W_t|}{\sqrt{1-\gamma}}>C_1\left(\frac{s}{p}+\sqrt{\frac{1+x}{p}}\right)\biggr\}\leq e^{-C_2x},$$
	as long as $C_3\left(\frac{s}{p}+\sqrt{\frac{1+x}{p}}\right)\leq 1$. Using (\ref{eq:tilde-Y-bar}) and a union bound argument, there exists a universal constant $C_4 > 0$ such that 
	\begin{equation}
	\max_{t\in\mathcal{T}}\|\wt{Y}_t-\bar{Y}_t\|_{\infty} \leq C_4\sqrt{\frac{\log\log(8n)}{p}},\label{eq:diff-tilde-Y-bar}
	\end{equation}
with $\mathbb{P}_{\theta}$-probability at least $1-1/\log^2(en)$, for any $\theta\in\Theta(p,n,s,0)$, under the conditions $s\leq (p\log\log(8n))^{1/5}$
	%{1/2-\tau}$ 
	and $\frac{\log\log(8n)}{p}\leq c$. From now on, the event that (\ref{eq:diff-tilde-Y-bar}) holds is denoted by $G$.
	
	With the above preparations, we can analyze $\mathbb{E}_{\theta}\psi$ for any $\theta\in\Theta_0(p,n)$. Recalling the definition of $g_a(\cdot)$ in (\ref{eq:def-g-a}), we set $C'$ in (\ref{eq:def-g-a}) to be $C_4$ in (\ref{eq:diff-tilde-Y-bar}). Then, on the event $G$, we have $g_a\bigl(\wt{Y}_t(j)\bigr)\leq f_a\bigl(\bar{Y}_t(j)\bigr)$ for $j \in [p]$, and therefore given $\epsilon >0$ we can choose $C = C(\epsilon) > 0$ in the definition of $r$ and $n_0 = n_0(\epsilon) \in \mathbb{N}$ such that 
	\[
	\mathbb{E}_{\theta}\psi_{a,r,C'} \leq \mathbb{E}_{\theta}(\psi\mathbbm{1}_{G}) + \mathbb{P}_{\theta}(G^c) \leq \mathbb{P}_{\theta}\biggl(\max_{t\in\mathcal{T}}\sum_{j=1}^pf_a(\bar{Y}_t(j)) > r\biggr)  + \frac{1}{\log^2(en)} \leq \frac{\epsilon}{2},
	\]
	for $n \geq n_0$, where the last inequality is by the same argument as in~\eqref{eq:type-1-sparse-main} in the proof of Proposition~\ref{thm:upper}.% and $\mathbb{P}_{\theta}(G^c)\leq (\log(en))^{-2}$.
	
	Now we analyze $\mathbb{E}_{\theta}(1-\psi_{a,r,C'})$ for $\theta\in\Theta(p,n,s,\rho)$.  Recall from the proof of Proposition~\ref{thm:upper} that given any $\theta\in\Theta(p,n,s,\rho)$, we may assume there exists $t_0 \leq n/2$ such that $\frac{t_0(n-t_0)}{n}\|\mu_1-\mu_2\|^2\geq\rho^2$; moreover, there exists a unique $\wt{t}\in\mathcal{T}$ such that $t_0/2 < \wt{t}\leq t_0$, and 
	$$\|\Delta_{\wt{t}}\|^2 = \frac{\wt{t}\|\mu_1-\mu_2\|^2}{2}\geq\frac{t_0\|\mu_1-\mu_2\|^2}{4}\geq\frac{t_0(n-t_0)}{4n}\|\mu_1-\mu_2\|^2\geq\frac{\rho^2}{4}.$$
	We introduce a function
	\begin{align*}
	h_a(x) := \inf\left\{f_a(y): |x-y|\leq \frac{a}{10}\right\} = \begin{cases}
	0, & |x|\leq \frac{9}{10}a, \\
	a^2-\nu_a, & \frac{9}{10}a<|x|\leq \frac{11}{10}a, \\
	\left(|x|-\frac{a}{10}\right)^2 - \nu_a, & |x|> \frac{11}{10}a.
	\end{cases}
	\end{align*}
	To gain some intuition, a plot of the functions $h_1(\cdot)$ and $f_1(\cdot)$ is shown in Figure~\ref{fig:h}. 
	\begin{figure}
		\centering
		\includegraphics[width=10cm,height=7cm]{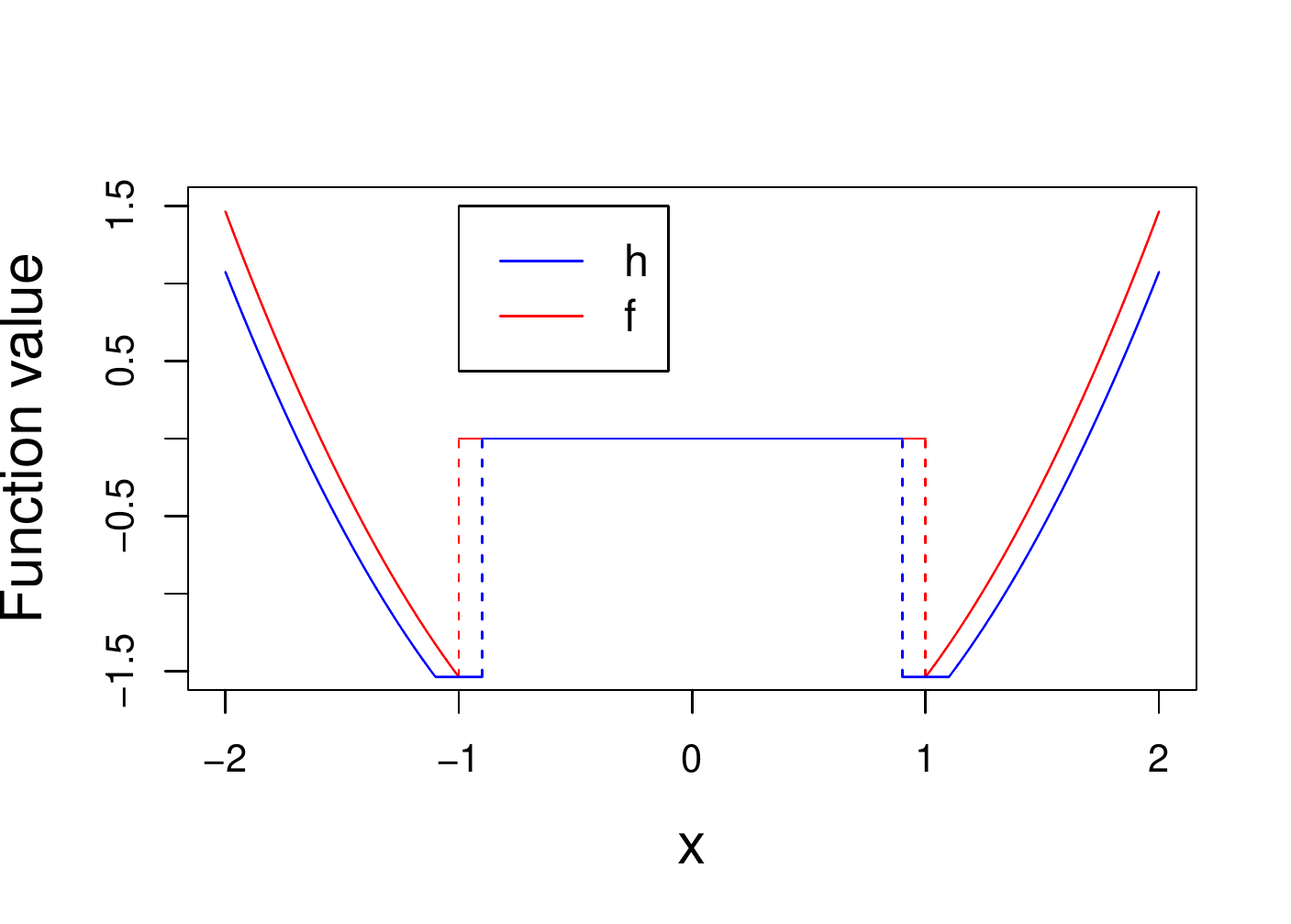}
		\caption{An illustration of the functions $f_a(\cdot)$ and $h_a(\cdot)$ for the special case $a=1$.}
		\label{fig:h}
	\end{figure}
	By reducing $c > 0$ if necessary, we may assume that $2C'\sqrt{\frac{\log\log(8n)}{p}}\leq \frac{a}{10}$, so we have on the event $G$ that $g_a(\wt{Y}_t(j))\geq h_a(\bar{Y}_t(j))$ for $j \in [p]$. Thus
	\begin{align*}
	\mathbb{E}_{\theta}(1-\psi_{a,r,C'})& \leq \mathbb{E}_{\theta}\bigl\{(1-\psi_{a,r,C'})\mathbbm{1}_{G}\bigr\} + \mathbb{P}_{\theta}(G^c)\\
	 &\leq \mathbb{P}_{\theta}\biggl(\sum_{j=1}^ph_a(\bar{Y}_{\wt{t}}(j)) \leq r\biggr) + \frac{1}{\log^2(en)},
	\end{align*}
	and we now control the first term on the right-hand side.  When $\Delta_{\wt{t}}(j) = 0$, we have $\mathbb{E}h_a\bigl(\bar{Y}_{\wt{t}}(j)\bigr) \leq \mathbb{E}f_a\bigl(\bar{Y}_{\wt{t}}(j)\bigr) = 0$.  Moreover, by Lemma~\ref{Lemma:muabounds},
	\begin{align*}
	&-\mathbb{E}h_a(\bar{Y}_{\wt{t}}(j)) \\
	&= 2(\nu_a - a^2)\bigl\{\Phi(11a/10) - \Phi(9a/10)\bigr\} + 2\int_{11a/10}^\infty \bigl\{\nu_a - (x - a/10)\bigr\} \phi(x) \, dx \\
	&\leq \frac{4a^3}{5} \phi(9a/10) + 6a^2\bigl\{1-\Phi(11a/10)\bigr\} \lesssim e^{-a^2/3}.
	\end{align*}
	Next, for $0 < |\Delta_{\wt{t}}(j)| < 8(1-\gamma)^{1/2}a$, we have $\mathbb{E}h_a\bigl(\bar{Y}_{\wt{t}}(j)\bigr) \geq -(\nu_a-a^2) \geq -2a^2$, and by Lemma~\ref{lem:trunc-chi-square-mean}, we have $\mathbb{E}h_a\bigl(\bar{Y}_{\wt{t}}(j)\bigr) \leq \mathbb{E}f_a\bigl(\bar{Y}_{\wt{t}}(j)\bigr) \leq \frac{64\Delta_{\wt{t}}^2(j)}{1-\gamma} + 1 \lesssim a^2$.
	
	Finally, we handle the case where $|\Delta_{\wt{t}}(j)| \geq 8(1-\gamma)^{1/2}a$, and assume without loss of generality that $\Delta_{\wt{t}}(j) \geq 8(1-\gamma)^{1/2}a$.  Observe by Lemma~\ref{Lemma:muabounds} that for $x \geq 4a$, we have
	\[
	(x-a/10)^2 - \nu_a \geq x^2 - \frac{ax}{5} - 3a^2 \geq x^2 - \frac{x^2}{20} - \frac{3x^2}{16} \geq 3x^2/4.
	\]
	Hence
	\begin{align*}
	\mathbb{E}h_a\bigl(\bar{Y}_{\wt{t}}(j)\bigr) &\geq \frac{3}{4} \mathbb{E}\bigl( \bar{Y}_{\wt{t}}(j)^2\mathbbm{1}_{\{\bar{Y}_{\wt{t}}(j) \geq 4a\}}\bigr) - (\nu_a - a^2)\mathbb{P}\bigl(\bar{Y}_{\wt{t}}(j) < 4a\bigr) \\
	&\geq \frac{3\Delta_{\wt{t}}^2(j)}{4(1-\gamma)} \mathbb{P}\bigl(\bar{Y}_{\wt{t}}(j) \geq 4a\bigr) - 3a^2 \mathbb{P}\bigl(\bar{Y}_{\wt{t}}(j) < 4a\bigr) \geq \frac{45\Delta_{\wt{t}}^2(j)}{128(1-\gamma)}.
	\end{align*}
	Summarising then, we have
	%Following the same arguments in the proofs of Lemma \ref{lem:trunc-chi-square-mean} and Lemma \ref{lem:trunc-chi-square-var}, it can be shown that
	$$\mathbb{E}h_a(\bar{Y}_{\wt{t}}(j))\begin{cases}
	\leq 0 \text{ and } \gtrsim -e^{-a^2/3} & \mbox{if $\Delta_{\wt{t}}(j) = 0$,} \\
	\gtrsim -a^2 \text{ and } \lesssim a^2 & \mbox{if $0 < |\Delta_{\wt{t}}(j)| < 8(1-\gamma)^{1/2}a$}, \\
	\geq \frac{45\Delta_{\wt{t}}(j)^2}{128(1-\gamma)} & \mbox{if $|\Delta_{\wt{t}}(j)| \geq 8(1-\gamma)^{1/2}a$}.
	\end{cases}$$
	We now study $\Var \, h_a\bigl(\bar{Y}_{\wt{t}}(j)\bigr)$.  When $\Delta_{\wt{t}}(j) = 0$, we have
	\begin{align*}
	\Var \, h_a\bigl(\bar{Y}_{\wt{t}}(j)\bigr) &\leq \mathbb{E}h_a^2\bigl(\bar{Y}_{\wt{t}}(j)\bigr) \\
	&\leq 2\int_{9a/10}^{\infty} \bigl\{\brac{\nu_a-a^2}^2\vee \brac{(x-a/10)^2-\nu_a}^2\bigr\} \phi(x) \, dx \\
	&\leq 2\int_{9a/10}^{\infty} (\nu_a^2\vee x^2) \phi(x) \, dx \lesssim e^{-a^2/4}.
	\end{align*}
	When $0 < |\Delta_{\wt{t}}(j)| < 2(1-\gamma)^{1/2}a$, assuming that $\Delta_{\wt{t}}(j)>0$ without loss of generality and writing $\theta := \Delta_{\wt{t}}(j)/(1-\gamma)^{1/2}$ as shorthand, we have
	\begin{align*}
	&\Var \, h_a\bigl(\bar{Y}_{\wt{t}}(j)\bigr) 
	\leq \mathbb{E}h_a^2\bigl(\bar{Y}_{\wt{t}}(j)\bigr)\\
	&\leq \biggl(\int_{-\infty}^{-\frac{9a}{10}}+\int_{\frac{9a}{10}}^{\theta+a}+\int_{\theta+a}^{\infty}\biggr) \bigl\{\brac{\nu_a-a^2}^2\vee \brac{(|x|-a/10)^2-\nu_a}^2\bigr\}\phi(x-\theta) \, dx\\
	&\lesssim e^{-a^2/4}+a^4+e^{-a^2/4} \lesssim a^4.
	%  &\lesssim a^4 + \mathbb{E}\bigl[\bigl\{h_a\bigl(\bar{Y}_{\wt{t}}(j)\bigr)-\mathbb{E}h_a\bigl(\bar{Y}_{\wt{t}}(j)\bigr)\bigr\}^2\mathbbm{1}_{\{h_a(\bar{Y}_{\wt{t}}(j)) \geq a^2\}}\bigr] \\
	%  & \lesssim a^4 + \mathbb{E}\bigl[\bigl\{h_a\bigl(\bar{Y}_{\wt{t}}(j)\bigr)-\mathbb{E}h_a\bigl(\bar{Y}_{\wt{t}}(j)\bigr)\bigr\}^2\mathbbm{1}_{\{h_a(\bar{Y}_{\wt{t}}(j)) \geq a^2\}}\bigr],
	\end{align*}
	Finally, when $|\Delta_{\wt{t}}(j)| \geq 2(1-\gamma)^{1/2}a$, %let us define a random variable $L$ which takes the values $1,2,3,4$ and $5$ according as $\bar{Y}_{\wt{t}}(j)$ belongs to the intervals $(-\infty,-11a/10)$, $[-11a/10,-9a/10]$, $(-9a/10,9a/10)$, $[9a/10,11a/10]$ and $(11a/10,\infty)$ respectively.  
	Let us define a random variable $L := \mathbbm{1}_{\{\bar{Y}_{\wt{t}}(j) \geq 11a/10\}}$.  Then assuming that $\Delta_{\wt{t}}(j)\geq 2(1-\gamma)^{1/2}a$ without loss of generality, we have that 
	\begin{align*}
	&\Var \, h_a\bigl(\bar{Y}_{\wt{t}}(j)\bigr)\\
	 &\hspace{0.2cm}= \E\bigl\{\Var\bigl(h_a\bigl(\bar{Y}_{\wt{t}}(j)\bigr)\bigm| L\bigr)\bigr\}+\Var \bigl\{\E\bigl(h_a\bigl(\bar{Y}_{\wt{t}}(j)\bigr)\bigm|L\bigr)\bigr\}\\
	&\hspace{0.2cm}\leq \Prob(L=0)\mathbb{E}\bigl\{h_a^2\bigl(\bar{Y}_{\wt{t}}(j)\bigr)|L=0\bigr\} + \Prob(L=1)\Var\bigl\{(\bar{Y}_{\wt{t}}(j)-a/10)^2|L=1\bigr\} \\
                &\hspace{0.5cm}+ \Prob(L=0)\Prob(L=1) \\
          &\hspace{0.8cm}\times \Bigl\{\bigl|\E\bigl\{h_a\bigl(\bar{Y}_{\wt{t}}(j)\bigr)|L=0\bigr\}\bigr|+\bigl|\E\bigl((\bar{Y}_{\wt{t}}(j) - a/10)^2 - \nu_a|L=1\bigr)\bigr|\Bigr\}^2.
	\end{align*}
	Now, similar to the proof of Lemma~\ref{lem:trunc-chi-square-var},
	\begin{align*}
	&\bigl|\E\bigl\{h_a\bigl(\bar{Y}_{\wt{t}}(j)\bigr)|L=0\bigr\}\bigr| \leq \nu_a + \mathbb{E}\bigl\{\bar{Y}_{\wt{t}}^2(j)\bigm| \bar{Y}_{\wt{t}}(j) < 11a/10\bigr\} \lesssim \frac{\Delta_{\wt{t}}^2(j)}{1-\gamma}, \\
	&\mathbb{E}\bigl\{h_a^2\bigl(\bar{Y}_{\wt{t}}(j)\bigr)|L=0\bigr\} \leq 2\nu_a^2 + 2\mathbb{E}\bigl\{\bar{Y}_{\wt{t}}^4(j)\bigm| \bar{Y}_{\wt{t}}(j) < 11a/10\bigr\} \lesssim \frac{\Delta_{\wt{t}}^4(j)}{(1-\gamma)^2}, \\
	&\bigl|\E\bigl((\bar{Y}_{\wt{t}}(j) \! - \! a/10)^2 \! - \! \nu_a|L=1\bigr)\bigr| \leq \nu_a + \mathbb{E}\bigl\{\bar{Y}_{\wt{t}}^2(j)\bigm| \bar{Y}_{\wt{t}}(j) \geq 11a/10\bigr\} \lesssim \frac{\Delta_{\wt{t}}^2(j)}{1-\gamma}.
	\end{align*}
	But $\mathbb{P}(L=0) = \Phi\bigl(11a/10 - \Delta_{\wt{t}}(j)/(1-\gamma)^{1/2}\bigr) \leq \Phi\bigl(\frac{-9\Delta_{\wt{t}}(j)}{20(1-\gamma)^{1/2}}\bigr)$.  Finally, we note that 
	\[
	\Prob(L=1)\Var\bigl\{(\bar{Y}_{\wt{t}}(j)-a/10)^2|L=1\bigr\} \leq \Var\bigl\{(\bar{Y}_{\wt{t}}(j)-a/10)^2\bigr\} \lesssim \frac{\Delta_{\wt{t}}^2(j)}{1-\gamma}.
	\]
	%Similar to the proof of Lemma~\ref{lem:trunc-chi-square-var}, within all the above terms, the only term that is not bounded by the Gaussian tail bound is the term $\Var\brac{h_a\brac{\bar{Y}_{\wt{t}}(j)}|L=1}$, which we now deal with. Notice the following facts
	%\begin{align*}
	%&\Var\brac{h_a\brac{\bar{Y}_{\wt{t}}(j)}|L=1}=\Var\brac{\brac{\bar{Y}_{\wt{t}}(j)-a/10}^2|L=1}\\
	%&\brac{\bar{Y}_{\wt{t}}(j)-a/10}\sim N((1-\gamma)^{-1/2}\Delta_{\wt{t}}(j)-a/10,1)\\
	%&\Var\brac{\brac{\bar{Y}_{\wt{t}}(j)-a/10}^2}\geq \Prob(L=1)\Var\brac{\brac{\bar{Y}_{\wt{t}}(j)-a/10}^2|L=1}\\
	%&\Prob(L=1)\gtrsim 1.
	%\end{align*}
	%Then these facts imply that
	%\begin{align*}
	%\Var\brac{h_a\brac{\bar{Y}_{\wt{t}}(j)}|L=1}\lesssim \frac{\Delta_{\wt{t}}(j)^2}{1-\gamma}.
	%\end{align*}
	%which gives the desired bound for the variance. 
	These observations allow us to deduce that
	$$\Var\left(h_a(\bar{Y}_{\wt{t}}(j))\right) \lesssim \begin{cases}
	e^{-a^2/4} & \mbox{if $\Delta_{\wt{t}}(j) = 0$,} \\
	a^4 & \mbox{if $0 < |\Delta_{\wt{t}}(j)| < 2(1-\gamma)^{1/2}a$,} \\
	\frac{\Delta_{\wt{t}}(j)^2}{1-\gamma} & \mbox{if $|\Delta_{\wt{t}}(j)| \geq 2(1-\gamma)^{1/2}a$.}
	\end{cases}$$
	The bound on the expectation then implies that
	\[\sum_{j:\Delta_{\wt{t}}(j)=0}\bigl|\E h_a\brac{\bar{Y}_{\wt{t}}(j)}\bigr|\lesssim pe^{-a^2/3}\lesssim p\log\log(8n)\brac{\frac{s^2}{p\log\log(8n)}}^{4/3}\leq s,\]
	where we used the condition $s\leq (p\log\log(8n))^{1/5}$. We deduce similarly to the argument in the proof of Proposition~\ref{thm:upper} that 
	\begin{align*}
	\sum_{j=1}^p \mathbb{E}h_a(\bar{Y}_{\wt{t}}(j)) \geq \frac{\|\Delta_{\wt{t}}\|^2}{4(1-\gamma)}
	%&\geq \sum_{j:|\Delta_{\wt{t}}(j)| \geq 2(1-\gamma)^{1/2}a} \frac{\Delta_{\wt{t}}^2(j)}{1-\gamma} 
	\end{align*}
	provided we choose $C=C(\epsilon) > 0$ sufficiently large in the definition of $\rho$.  Moreover,
	%$$\mathbb{E}\sum_{j=1}^ph_a(\bar{Y}_{\wt{t}}(j)) \geq c_1\frac{\|\Delta_{\wt{t}}\|^2}{1-\gamma},$$
	%and
	$$\sum_{j=1}^p \Var\left(h_a(\bar{Y}_{\wt{t}}(j))\right)\lesssim pe^{-a^2/4} + sa^4 + \frac{\|\Delta_{\wt{t}}\|^2}{1-\gamma}.$$
	By Chebychev's inequality, we deduce that 
	\begin{align*}
	\mathbb{P}_{\theta}\biggl(\sum_{j=1}^ph_a(\bar{Y}_{\wt{t}}(j)) \leq r\biggr) &\leq \mathbb{P}_{\theta}\biggl(\sum_{j=1}^ph_a(\bar{Y}_{\wt{t}}(j)) \leq \frac{\|\Delta_{\wt{t}}\|^2}{8}\biggr) \\
	&\lesssim \frac{pe^{-a^2/4} + sa^4 + \frac{\|\Delta_{\wt{t}}\|^2}{1-\gamma}}{\bigl\{\|\Delta_{\wt{t}}\|^2/(1-\gamma)\bigr\}^2} 
	\lesssim \frac{pe^{-a^2/4} + sa^4 + \frac{\rho^2}{1-\gamma}}{\bigl\{\rho^2/(1-\gamma)\bigr\}^2}. 
	\end{align*}
	Hence, by increasing $n_0 = n_0(\epsilon)$ and $C = C(\epsilon) > 0$ if necessary, we may conclude that $\mathbb{E}_\theta(1-\psi_{a,r,C'}) \leq \epsilon/2$, as required.
\end{proof}

\begin{proof}[Proof of Theorem \ref{thm:sparse-lower-spatial}]
	The proof uses similar arguments to those in the proof of Proposition~\ref{thm:lower}, but instead of establishing (\ref{eq:lower-goal}), we need to show that given $\eta > 0$, we can find a universal constant $c > 0$ such that $\mathbb{E}_{(\theta_1,\theta_2)\sim\nu\otimes\nu}\exp(\iprod{\theta_1}{\theta_2}_{\Sigma(\gamma)^{-1}})\leq 1+\eta$ when $\rho \geq cr_{\Sigma(\gamma)}^*$, where $r_{\Sigma(\gamma)}^*$ denotes the right-hand side of~\eqref{Eq:rstargamma}. Since
	\begin{equation}
	\Sigma(\gamma)^{-1}=\kappa_1(\gamma)I_p - \kappa_2(\gamma)\mathbf{1}_p\mathbf{1}_p^T,\label{eq:inv-cov-gamma}
	\end{equation}
	with $\kappa_1(\gamma)=\frac{1}{1-\gamma}$ and $\kappa_2(\gamma)=\frac{\gamma}{(1-\gamma)(1+(p-1)\gamma)}$, the calculation will be very similar, and essentially our argument replaces $I_p$ in the proof of Proposition~\ref{thm:lower} by $\kappa_1(\gamma)I_p$.
	
	First consider the case when $s\leq\sqrt{p\log\log (8n)}$ and $s\log\left(\frac{ep\log\log(8n)}{s^2}\right)\geq \log\log(8n)$. We define $\nu$ to be the distribution of $\theta$, sampled according to the following process:
	\begin{enumerate}
		\item Uniformly sample a subset $S$ of $[p]$ of cardinality $s$;
		\item Independently, sample $k$ according to a uniform distribution on \\$\{0,1,2,\ldots,\floor{\log_2(n/2)}\}$;
		\item Given $(S,k)$ sampled in the previous steps, define $\theta_{j\ell} := \beta/2^{k/2}$ for all $(j,\ell)\in S\times [2^k]$ and $\theta_{j\ell}:=0$ otherwise, where $\beta > 0$.
	\end{enumerate}
	Suppose that we generate $\theta_1$ and $\theta_2$ independently with distribution $\nu$, where $\theta_1$ is generated from $(S,k)$ and $\theta_2$ comes from $(T,l)$.  By (\ref{eq:inv-cov-gamma}), we have $\iprod{\theta_1}{\theta_2}_{\Sigma(\gamma)^{-1}}\leq \frac{\kappa_1(\gamma)\beta^2}{2^{|l-k|/2}}|S\cap T|$, and thus
	$$\mathbb{E}_{(\theta_1,\theta_2)\sim\nu\otimes\nu}\exp\bigl(\iprod{\theta_1}{\theta_2}_{\Sigma(\gamma)^{-1}}\bigr)\leq \mathbb{E}\exp\left(\frac{\kappa_1(\gamma)\beta^2}{2^{|l-k|/2}}|S\cap T|\right).$$
	Note that we obtain the same formula as (\ref{eq:mgfb3step}) except that the $\beta^2$ in (\ref{eq:mgfb3step}) is replaced by $\kappa_1(\gamma)\beta^2$. This immediately implies that the same argument that bounds (\ref{eq:mgfb3step}) can also be applied here and we obtain the lower bound with the desired rate $\kappa_1(\gamma)^{-1}s\log\left(\frac{ep\log\log(8n)}{s^2}\right)$.
	
	Next we consider the case $s\leq\sqrt{p\log\log (8n)}$ and $s\log\left(\frac{ep\log\log(8n)}{s^2}\right)< \log\log(8n)$. The sampling process for $\theta \sim \nu$ is now:
	\begin{enumerate}
		\item Sample $k$ from a uniform distribution on $\{0,1,2,\ldots,\floor{\log_2(n/2)}\}$;
		\item Given $k$, define $\theta_{j\ell}:= \beta/2^{k/2}$ for all $(j,\ell)\in [s]\times [2^k]$ and $\theta_{j\ell}:=0$ otherwise.
	\end{enumerate}
	Similarly to before, $\iprod{\theta_1}{\theta_2}_{\Sigma(\gamma)^{-1}}\leq \frac{\kappa_1(\gamma)\beta^2}{2^{|l-k|/2}}s$, and thus
	$$\mathbb{E}_{(\theta_1,\theta_2)\sim\nu\otimes\nu}\exp(\iprod{\theta_1}{\theta_2}_{\Sigma(\gamma)^{-1}})\leq \mathbb{E}\exp\left(\frac{\kappa_1(\gamma)\beta^2}{2^{|l-k|/2}}s\right).$$
	We can then set $\kappa_1(\gamma)\beta^2s=c_1\log\log(8n)$ for a sufficiently small $c_1>0$, and apply the same argument as in the proof of \citet[][Proposition~4.2]{gao2017minimax}.  The lower bound follows with rate $\rho^2=s\beta^2\asymp \kappa_1(\gamma)^{-1}\log\log(8n)$.
\end{proof}

\newpage

\appendix

\section{Proofs of results in Section~\ref{SECTEMP}}
\begin{proof}[Proof of Theorem \ref{thm:upper-temporal}]
	For $t \in \big[\lfloor n/2 \rfloor\bigr]$, define $\Gamma_t :=\Cov\bigl((E_1+\ldots +E_t)-(E_{n-t+1}+\ldots +E_n)\bigr)$. Then %Then, by direct calculation, we have
	%$$\opnorm{\Gamma_t}\leq 2t(1+B).$$
	%For any $\theta\in\Theta_0(p,n)$, we have
	$$\|Y_t\|^2 \leq \|\Gamma_t\|_{\mathrm{op}}\|\Gamma_t^{-1/2}Y_t\|^2 \leq 2t(1+B)\|\Gamma_t^{-1/2}Y_t\|^2.$$
	Now fix $\theta\in\Theta_0(p,n)$.  Given $\epsilon \in (0,1)$, set $C := C(\epsilon) = 4 + 4\log(4/\epsilon)$.  Since $2t\|\Gamma_t^{-1/2}Y_t\|^2\sim\chi_p^2$, by a union bound and Lemma~\ref{lem:chi-square-tail}, given $\epsilon > 0$, writing $x = \{1 + \log(4/\epsilon)\}\log \log (8n)$, we have
	\begin{align*}
	\mathbb{E}_{\theta,\Sigma}\psi_{\mathrm{Temp}} &\leq \mathbb{P}_{\theta,\Sigma}\biggl(\max_{t\in\mathcal{T}}\|Y_t\|^2 - p > Bp + (1+B)(2\sqrt{px}+2x)\biggr) \\
	&=\mathbb{P}_{\theta,\Sigma}\biggl(\max_{t\in\mathcal{T}}\|Y_t\|^2 > (1+B)(p+2\sqrt{px}+2x)\biggr) \\
	&\leq |\mathcal{T}|\mathbb{P}\bigl(\chi_p^2>p+2\sqrt{px}+2x\bigr) \leq 2\log(en)e^{-x} \leq \frac{\epsilon}{2}.
	\end{align*}
	%which can be shown to be arbitrarily small as long as $x\geq C_1\log\log(8n)$ for some sufficiently large constant $C_1>0$. This immediately implies that $\mathbb{P}_{\theta}\psi$ is bounded by an arbitrarily small constant given the choice of $r$.
	
	Now, for any $\theta\in\Theta(p,n,s,\rho)$, without loss of generality, we may assume there exists $t_0\in [n/2]$, such that $\frac{t_0(n-t_0)}{n}\|\mu_1-\mu_2\|^2\geq\rho^2$, and a unique $\wt{t}\in\mathcal{T}$ such that $t_0/2 < \wt{t}\leq t_0$.  Thus $2\wt{t}\|\Gamma_{\wt{t}}^{-1/2}Y_{\wt{t}}\|^2 \sim \chi_{p,\delta^2}^2$, with 
	$$\delta^2 = \frac{\wt{t}\|\mu_1-\mu_2\|^2}{2}\geq\frac{t_0\|\mu_1-\mu_2\|^2}{4}\geq\frac{t_0(n-t_0)}{4n}\|\mu_1-\mu_2\|^2\geq\frac{\rho^2}{4}.$$
	Therefore,
	\begin{align*}
	\mathbb{E}_{\theta,\Sigma}(\|Y_{\wt{t}}\|^2)& - p\\
	 &= \delta^2 + \mathbb{E}_{\theta,\Sigma}\biggl(\left\|\frac{(E_1+\ldots +E_{\wt{t}})-(E_{n-\wt{t}+1}+\ldots +E_n)}{\sqrt{2\wt{t}}}\right\|^2\biggr)-p \\
	&= \delta^2 + \frac{1}{2\wt{t}}\Tr(\Gamma_{\wt{t}} - 2\wt{t}I_p) \geq \delta^2 - \frac{p}{2\wt{t}}\opnorm{\Gamma_{\wt{t}} - 2\wt{t}I_p} \geq \delta^2 - Bp,
	\end{align*}
	where the last inequality is by expanding $\Gamma_t$ according to its definition and the condition that $\sum_{j\in[n]\backslash\{i\}}\opnorm{\Cov(E_i,E_j)}\leq B$ for all $i\in[n]$.  Since $C \geq 1/4$, we have $\rho^2\geq 8Bp$, and then
	$$\mathbb{E}_{\theta,\Sigma}(\|Y_{\wt{t}}\|^2) - p \geq \frac{\delta^2}{2}.$$
	Now write $W_t := Y_t - \mathbb{E}_{\theta,\Sigma}Y_t$, so that $W_t \sim N_p\bigl(0,\Gamma_t/(2t)\bigr)$, and find an orthogonal matrix $Q_t \in \mathbb{R}^{p \times p}$ such that $Q_t^\top \Gamma_t Q_t = D_t$, where $D_t$ is diagonal.  Then, with $Z_t \sim N_p(0,I_p)$, we have %The variance of $\mathbb{E}\|Y_{\wt{t}}\|^2 - p$ is bounded by
	\begin{align*}
\Var&\left(\|Y_{\wt{t}}\|^2 - p\right) \\
	&= \Var\bigl(\|W_{\wt{t}}\|^2 + 2W_{\wt{t}}^\top \mathbb{E}_{\theta,\Sigma}(Y_{\wt{t}})\bigr) \leq 2 \Var\bigl(\|W_{\wt{t}}\|^2\bigr) + \frac{4}{\wt{t}}\mathbb{E}_{\theta,\Sigma}(Y_{\wt{t}})^\top\Gamma_{\wt{t}}\mathbb{E}_{\theta,\Sigma}(Y_{\wt{t}}) \\
	&\leq \frac{1}{2\wt{t}^2} \Var\bigl(Z_{\wt{t}}^\top D_{\wt{t}}Z_{\wt{t}} \bigr) + \frac{4\delta^2}{\wt{t}}\|\Gamma_{\wt{t}}\|_{\mathrm{op}} = \frac{\|\Gamma_{\wt{t}}\|_{\mathrm{F}}^2}{\wt{t}^2} + \frac{4\delta^2}{\wt{t}}\|\Gamma_{\wt{t}}\|_{\mathrm{op}} \\
	&\leq 4p(1+B)^2 + 8\delta^2(1+B).
	% \lesssim \biggl\|\frac{\Gamma_{\wt{t}}}{2{\wt{t}}}\biggr\|_{\mathrm{op}} \delta^2 + \biggl\|\frac{\Gamma_{\wt{t}}}{2{\wt{t}}}\biggr\|_{\mathrm{F}}^2 \leq (1+B)\delta^2 + p(1+B)^2.
	\end{align*}
	Using Chebychev's inequality, we therefore have
	\begin{align*}
	\mathbb{E}_{\theta,\Sigma}&(1-\psi_{\mathrm{Temp}})
	= \mathbb{P}_{\theta,\Sigma}\left(\max_{t\in\mathcal{T}}\|Y_t\|^2-p\leq r\right) \\
	&\leq \mathbb{P}_{\theta,\Sigma}\left(\|Y_{\wt{t}}\|^2-p\leq  \frac{\rho^2}{32}\right) \leq \mathbb{P}_{\theta,\Sigma}\left(\|Y_{\wt{t}}\|^2-p\leq  \frac{\delta^2}{8}\right) \\
	&\leq \frac{2^8p(1+B)^2 + 2^9\delta^2(1+B)}{\delta^4} \leq \frac{2^{12}p(1+B)^2 + 2^{11}\rho^2(1+B)}{\rho^4},
	\end{align*}
	and we can ensure this final term is bounded above by $\epsilon/2$ by increasing $C=C(\epsilon)$ so that $C \geq 2^8/\epsilon^{1/2}$.
	%which is sufficiently small by the choice of $\rho^2$.
	%The choice of $r$ and the condition of $\rho^2$ imply that $\mathbb{P}_{\theta}(1-\psi)\leq \mathbb{P}_{\theta}\left(\max_{t\in\mathcal{T}}\|Y_t\|^2-p\leq \frac{\rho^2}{32}\right)$. Therefore, $\mathbb{P}_{\theta}(1-\psi)$ is sufficiently small under the condition of $\rho^2$. The proof is complete.
\end{proof}

\begin{proof}[Proof of Theorem \ref{thm:lower-temporal}]
	It suffices to prove the result with $\rho_{\mathrm{Temp}}^*$ replaced with $\rho_1^* \vee \rho_2^*$, where $\rho_1^* := (Bp)^{1/2}$ and $\rho_2^* := \bigl[(1+B)\bigl\{\sqrt{p \log \log(8n)} \vee \log \log (8n)\bigr\}\bigr]^{1/2}$.  For the lower bound $\rho_1^*$, fixing $a \in (0,1]$, we define a covariance matrix $\Sigma_0\in\mathbb{R}^{pn\times pn}$, specified by the following three conditions:
	\begin{enumerate}
		\item $\Cov(E_t)=I_p$ for all $t\in[n]$;
		\item $\Cov(E_s,E_t)=aI_p$ for all $1\leq s\neq t\leq n/2$;
		\item $\Cov(E_s,E_t)=0$ for the remaining pairs $s\neq t$.
	\end{enumerate}
	A sufficient condition for $\Sigma_0\in\mathcal{C}(p,n,B)$ is $na/2\leq B$. We also define a covariance matrix $\Sigma_1\in\mathbb{R}^{pn\times pn}$, specified by the following three conditions:
	\begin{enumerate}
		\item $\Cov(E_t)=(a+1)I_p$ for all $1\leq t\leq n/2$ and $\Cov(E_t)=I_p$ for all $n/2<t\leq n$;
		\item $\Cov(E_s,E_t)=aI_p$ for all $1\leq s\neq t\leq n/2$;
		\item $\Cov(E_s,E_t)=0$ for the remaining pairs $s\neq t$. 
	\end{enumerate}
	Let $Z \sim N_p(0,aI_p)$, and let $Q$ denote the conditional distribution of $Z$ given that $\|Z\|^2 \geq pa/2$.  In other words, 
	$$Q(V)=\frac{\int_{\{\mu\in V:\|\mu\|^2\geq pa/2\}} dN_p(0,aI_p)}{\int_{\{\mu:\|\mu\|^2\geq pa/2\}}dN_p(0,aI_p)}.$$
	for any Borel measurable $V\subseteq \mathbb{R}^p$.  We then define $\nu$ to be the distribution of the random $p \times n$ matrix $\theta$ that is generated according to the following sampling process:
	\begin{enumerate}
		\item Sample $\mu\sim Q$;
		\item Let $\theta_t=\mu$ for all $1\leq t\leq n/2$ and $\theta_t=0$ for all $n/2<t\leq n$.
	\end{enumerate}
	Then $\supp(\nu)\subseteq\Theta(p,n,p,\rho)$ with $\rho^2=npa/8$. We also define another distribution $\wt{\nu}$ to be the distribution of $\theta$ when it is generated as follows:
	\begin{enumerate}
		\item Sample $\mu\sim N_p(0,aI_p)$;
		\item Let $\theta_t=\mu$ for all $1\leq t\leq n/2$ and $\theta_t=0$ for all $n/2<t\leq n$.
	\end{enumerate}
	To lower bound $\mathcal{R}(\rho)$, we need to specify several distributions.  We define
	$$\mathbb{P}_0:=\mathbb{P}_{0,\Sigma_0}\quad\text{and}\quad\mathbb{P}_1:=\int_{\supp(\nu)}\mathbb{P}_{\theta,I_{pn}}\, d\nu(\theta).$$
	To bridge the relation between $\mathbb{P}_0$ and $\mathbb{P}_1$, we define
	$$\wt{\mathbb{P}}_1:=\int_{\supp(\wt{\nu})}\mathbb{P}_{\theta,I_{pn}} \, d\wt{\nu}(\theta).$$
	%Using the property of Gaussian distribution, we can equivalently write
	%$$\wt{\mathbb{P}}_1=\mathbb{P}_{0,\Sigma_1}.$$
	%When $na/2\leq B$ and $\rho^2=npa/4$, we have $\Sigma_0\in\mathcal{C}(p,n,B)$ and $\supp(\nu)\subset\Theta(p,n,p,\rho)$. 
	We claim that $\wt{\mathbb{P}}_1=\mathbb{P}_{0,\Sigma_1}$.  To see this, first note that if $X \sim \wt{\mathbb{P}}_1$, then $X \stackrel{d}{=} \theta + E$, where $\theta \sim \wt{\nu}$, $E$ has independent $N(0,1)$ entries and $\theta$ and $E$ are independent.  Since $\theta$ is a linear transformation of the Gaussian vector $\mu$, we deduce that $X$ is Gaussian.  Moreover, $\widetilde{\mathbb{E}}_1(X) = 0$ and
	\begin{align*}
	\Cov(X_s,X_t) &= \Cov\bigl(\widetilde{\mathbb{E}}_1(X_s|\mu),\widetilde{\mathbb{E}}_1(X_t|\mu)\bigr) + \widetilde{\mathbb{E}}_1\bigl\{\Cov(X_s,X_t |\mu)\bigr\} \\
	&= \Cov(\theta_s,\theta_t) + \widetilde{\mathbb{E}}_1\bigl\{\Cov(X_s,X_t |\mu)\bigr\} \\
	&= \left\{ \begin{array}{ll} aI_p \mathbbm{1}_{\{1 \leq s, t \leq n/2\}} + I_p & \mbox{if $s=t$} \\
	aI_p \mathbbm{1}_{\{1 \leq s, t \leq n/2\}} & \mbox{if $s \neq t$.}
	\end{array} \right.
	\end{align*}
	In other words, $\Cov(X) = \Sigma_1$, which establishes our claim.  Hence
	\begin{align*}
	\mathcal{R}(\rho) &\geq \inf_{\psi \in \Psi} \bigl\{\mathbb{E}_0\psi + \mathbb{E}_1(1-\psi)\bigr\} = 1-\TV(\mathbb{P}_0,\mathbb{P}_1) \\
	&\geq 1-\TV(\mathbb{P}_0,\wt{\mathbb{P}}_1)-\TV(\mathbb{P}_1,\wt{\mathbb{P}}_1). %\geq \frac{1}{2}\exp\left(-D(\mathbb{P}_0\|\wt{\mathbb{P}}_1)\right)-\TV(\mathbb{P}_1,\wt{\mathbb{P}}_1),
	\end{align*}
	%where the last inequality is by \citet[][Lemmas~2.1 and~2.6]{tsybakov2009introduction}. %Therefore, it is sufficient to upper bound $D(\mathbb{P}_0\|\wt{\mathbb{P}}_1)$ by some constant and also show that $\TV(\mathbb{P}_1,\wt{\mathbb{P}}_1)$ is sufficiently small.
	Now,
	\begin{align*}
	\TV(\mathbb{P}_0,\wt{\mathbb{P}}_1) = \TV(\mathbb{P}_{0,\Sigma_0},\mathbb{P}_{0,\Sigma_1})\leq \frac{3}{2}\fnorm{\Sigma_1^{-1}\Sigma_0-I_{np}}&\leq \frac{3}{2}\fnorm{\Sigma_0-\Sigma_1}\\
	&\leq \frac{3}{2}\sqrt{\frac{npa^2}{2}},
	\end{align*}
	where the first inequality is by \citet[][Theorem~1.1]{devroye2018total} and the second inequality is by the fact that the smallest eigenvalue of $\Sigma_1$ is 1.
	
	For the second term, by the data processing inequality \citep{ali1966general,zakai1975generalization}, we obtain
	\begin{align*}
	\TV(\mathbb{P}_1,\wt{\mathbb{P}}_1) \leq \TV(\nu,\wt{\nu}) \leq 2\int_{\{\mu:\|\mu\|^2< pa/2\}}dN_p(0,aI_p)& = 2\mathbb{P}\left(\chi_p^2<\frac{p}{2}\right) \\
	&\leq 2e^{-p/16},
	\end{align*}
	where the final inequality follows from Lemma~\ref{lem:chi-square-tail}. %For $\TV(\mathbb{P}_0\|\wt{\mathbb{P}}_1)$, we have% we first note that the smallest eigenvalue of $\Sigma_1$ is $1$. Thus,
	%\begin{align*}
	%D(\mathbb{P}_0\|\wt{\mathbb{P}}_1) &= D(\mathbb{P}_{0,\Sigma_0}\|\mathbb{P}_{0,\Sigma_1}) = -\frac{1}{2}\log\det\left(\Sigma_1^{-1/2}\Sigma_0\Sigma_1^{-1/2}\right) + \frac{1}{2}\Tr\left(\Sigma_1^{-1/2}\Sigma_0\Sigma_1^{-1/2}-I_{np}\right) \\
	%&\leq \frac{1}{4}\fnorm{\Sigma_1^{-1/2}\Sigma_0\Sigma_1^{-1/2}-I_{np}}^2 \leq \frac{1}{4}\fnorm{\Sigma_0-\Sigma_1}^2 \leq \frac{npa^2}{2}.
	%\end{align*}
	%\begin{align*}
	%\TV(\mathbb{P}_0\|\wt{\mathbb{P}}_1) &= \TV(\mathbb{P}_{0,\Sigma_0}\|\mathbb{P}_{0,\Sigma_1})\lesssim \fnorm{\Sigma_1^{-1}\Sigma_0-I_{np}}\leq \fnorm{\Sigma_0-\Sigma_1}\leq \sqrt{\frac{npa^2}{2}},
	%\end{align*}
	%where the first inequality is by \citet[][Theorem~1.1]{devroye2018total} and the second inequality is by the fact that the smallest eigenvalue of $\Sigma_1$ is 1.
	%Choose $a=CB/n$, and then whenever $B\leq \sqrt{\frac{n}{p}}$, we can choose $C$ small enough such that $\TV(\mathbb{P}_0\|\wt{\mathbb{P}}_1)$ is arbitrarily close to $0$. Hence, we obtain the desired lower bound with the rate $\rho^2\asymp Bp$.
	Given $\epsilon > 0$, we can therefore let $a = a_*B/n$ with $a_* := \sqrt{2}\epsilon/(3D)$, which amounts to choosing $c_{\epsilon,D} = a_*^{1/2}/2$ and $p_\epsilon = 16\log(4/\epsilon)$ to obtain the lower bound $\rho_1^*$. 
	
	The lower bound $\rho_2^*$ is relatively easier. Without loss of generality, we assume $n/\lceil B \rceil$ to be an integer.  We then divide the set $[n]$ into consecutive blocks $J_1,J_2,\ldots,J_{n/\lceil B \rceil}$, each of cardinality $\lceil B \rceil$.  We define a covariance matrix $\bar{\Sigma}\in\mathbb{R}^{pn\times pn}$ according to the following two conditions:
	\begin{enumerate}
		\item $\Cov(E_t)=I_p$ for all $t\in[n]$;
		\item $\Cov(E_s,E_t)=I_p$ for all $s\neq t$ in the same block, and otherwise $\Cov(E_s,E_t)=0$.
	\end{enumerate}
	Since $\lceil B \rceil-1\leq B$, we have $\bar{\Sigma}\in\mathcal{C}(p,n,B)$. Define 
	$$\bar{\Theta}(p,n,p,\rho) := \bigcup_{\ell=0}^{n/\lceil B \rceil}\Theta^{(\ell\lceil B \rceil)}(p,n,p,\rho)\subseteq \Theta(p,n,p,\rho).$$
	Then
	\begin{align*}
	&\mathcal{R}(\rho) \geq \inf_{\psi \in \Psi}\biggl(\sup_{\theta\in\Theta_0(p,n)}\mathbb{E}_{\theta,\bar{\Sigma}}\psi + \sup_{\theta\in\bar{\Theta}(p,n,p,\rho)}\mathbb{E}_{\theta,\bar{\Sigma}}(1-\psi)\biggr) \\
	&= \inf_{\psi \in \Psi}\biggl(\sup_{\theta\in\Theta_0(p,n/\lceil B \rceil)}\mathbb{E}_{\theta,I_{pn/\lceil B \rceil}}\psi + \sup_{\theta\in{\Theta}(p,n/\lceil B \rceil,p,\rho/\sqrt{\lceil B \rceil})}\mathbb{E}_{\theta,I_{pn/\lceil B \rceil}}(1-\psi)\biggr).
	\end{align*}
	In other words, we have constructed a covariance structure which leads to a simpler problem with $n/\lceil B \rceil$ independent observations and signal strength $\rho^2/\lceil B \rceil$. By Proposition~\ref{thm:lower}, this simpler problem has lower bound $\rho^2/\lceil B \rceil\gtrsim \sqrt{p\log\log(8n/\lceil B \rceil)}\vee\log\log(8n/\lceil B \rceil)$, which is equivalent to the rate $\lceil B \rceil\left(\sqrt{p\log\log(8n/\lceil B \rceil)}\vee\log\log(8n/\lceil B \rceil)\right)$ for the original problem. Under the condition $B\leq D\sqrt{n/p}$%, we have $\lceil B \rceil\asymp 1+B$ and $\log\log(8n/\lceil B \rceil)\asymp\log\log(8n)$, and the proof is complete.
	, the result follows. 
\end{proof}

\section{Technical Lemmas}

In this section we give the auxiliary results used in the proofs of the main results. We first state some lemmas of chi-squared tail bounds.
\begin{lemma}[Lemma~1 of \citet{laurent2000adaptive}]\label{lem:chi-square-tail}
	Let $Z_1,\ldots,Z_p \stackrel{\mathrm{iid}}{\sim} N(0,1)$ and let $\lambda_1\geq\lambda_2\geq\cdots\geq\lambda_p\geq 0$. Then, for any $x>0$, we have
	$$\mathbb{P}\Biggl(\sum_{j=1}^p\lambda_jZ_j^2 \geq \sum_{j=1}^p\lambda_j + 2\sqrt{x\sum_{j=1}^p\lambda_j^2}+2\lambda_1x\Biggr)\leq e^{-x},$$
	and
	$$\mathbb{P}\Biggl(\sum_{j=1}^p\lambda_jZ_j^2 \leq \sum_{j=1}^p\lambda_j - 2\sqrt{x\sum_{j=1}^p\lambda_j^2}\Biggr)\leq e^{-x}.$$
\end{lemma}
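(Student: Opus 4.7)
Setting $W := \sum_{j=1}^p \lambda_j(Z_j^2 - 1)$, $A := \sum_{j=1}^p \lambda_j^2$ and $B := \lambda_1$, both tail bounds follow from the Chernoff (Laplace transform) method. The plan is to first compute the cumulant generating function of $W$, then dominate it using elementary inequalities, and finally optimize the exponential parameter. Since $Z_j^2 \sim \chi_1^2$ has moment generating function $\mathbb{E}(e^{t Z_j^2}) = (1-2t)^{-1/2}$ for $t < 1/2$, independence yields, for $\theta \in (0, 1/(2B))$,
\[
\log \mathbb{E}(e^{\theta W}) = \sum_{j=1}^p \Bigl\{-\tfrac{1}{2}\log(1 - 2\theta \lambda_j) - \theta \lambda_j\Bigr\}.
\]
Applying the elementary inequality $-\log(1-u) - u \leq u^2/\{2(1-u)\}$ for $u \in [0,1)$ with $u = 2\theta \lambda_j$, together with the monotonicity $\lambda_j \leq B$, gives the Bernstein-type bound
\[
\log \mathbb{E}(e^{\theta W}) \leq \sum_{j=1}^p \frac{\theta^2 \lambda_j^2}{1 - 2\theta \lambda_j} \leq \frac{\theta^2 A}{1 - 2\theta B}.
\]

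For the upper tail, Markov's inequality gives $\mathbb{P}(W \geq t) \leq \exp\bigl(-\theta t + \theta^2 A/(1-2\theta B)\bigr)$. To handle the constrained optimization cleanly, I would reparametrize by $\theta = v/(1+2Bv)$ for $v > 0$, which automatically keeps $\theta \in (0, 1/(2B))$ and satisfies $1 - 2\theta B = 1/(1+2Bv)$. The exponent then simplifies to $(-vt + v^2 A)/(1+2Bv)$, and the condition that this be $\leq -x$ becomes the quadratic inequality $v^2 A - v(t - 2Bx) + x \leq 0$. Choosing $t = 2\sqrt{Ax} + 2Bx$ makes the discriminant vanish exactly, with root $v = \sqrt{x/A}$, and a direct check confirms the exponent equals $-x$ at this point, establishing the first claim.

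For the lower tail, the analogous computation with $-\theta W$ in place of $\theta W$ uses $\log(1+u) \geq u - u^2/2$ for $u \geq 0$ to obtain $\log \mathbb{E}(e^{-\theta W}) \leq \theta^2 A$ with \emph{no} restriction on $\theta > 0$. Unconstrained minimization of $\exp(-\theta t + \theta^2 A)$ over $\theta$ at $\theta^* = t/(2A)$ and the substitution $t = 2\sqrt{Ax}$ then produce the second claim.

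\textbf{Main obstacle.} The delicate part is the upper tail. Two ingredients are essential: identifying the correct weighted Bernstein-type bound on the log-MGF (which hinges on the elementary inequality $-\log(1-u)-u \leq u^2/\{2(1-u)\}$ rather than the cruder $u^2/2$ that works only for the lower tail), and the reparametrization $\theta = v/(1+2Bv)$ that converts a constrained nonlinear Chernoff optimization into a transparent quadratic in $v$. The lower tail is comparatively routine since the log-MGF bound holds globally in $\theta$.
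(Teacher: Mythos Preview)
Your proof is correct and is essentially the standard Chernoff argument that appears in the original Laurent--Massart paper; the present paper does not give its own proof of this lemma but simply cites \citet{laurent2000adaptive}. The only remark worth making is that your reparametrization $\theta = v/(1+2Bv)$ is a nice way to linearize the constrained optimization, though one can also just verify directly that the choice $\theta = \sqrt{x/A}\big/(1+2B\sqrt{x/A})$ works.
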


\begin{lemma}[Lemma~8.1 of \citet{birge2001alternative}]\label{lem:noncentral-chi-square-tail}
Let $Y\sim\chi_{p,\lambda}^2$ be a non-central chi-squared random variable with $p$ degrees of freedom and non-centrality parameter $\lambda\geq 0$. Then, for any $x>0$, we have
$$\mathbb{P}\left(Y\geq (p+\lambda)+2\sqrt{x(p+2\lambda)}+2x\right)\leq e^{-x},$$
and
$$\mathbb{P}\left(Y\leq (p+\lambda)-2\sqrt{x(p+2\lambda)}\right)\leq e^{-x}.$$
\end{lemma}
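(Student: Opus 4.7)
The plan is to control the centered log moment generating function of $Y$ by a sub-gamma function of $t$ and then apply Chernoff's inequality via its Legendre--Fenchel transform. Representing $Y = \|X+\mu\|^2$ with $X\sim N_p(0,I_p)$ and $\mu\in\mathbb{R}^p$ satisfying $\|\mu\|^2=\lambda$, a coordinatewise Gaussian integral gives
\[
\mathbb{E}\bigl[e^{tY}\bigr] \;=\; (1-2t)^{-p/2}\exp\!\biggl(\frac{t\lambda}{1-2t}\biggr) \qquad\text{for } t<1/2,
\]
so that the centered log-MGF is
\[
\Lambda(t) \;:=\; \log \mathbb{E}\bigl[e^{t(Y-(p+\lambda))}\bigr] \;=\; -\tfrac{p}{2}\log(1-2t)-tp+\frac{2t^2\lambda}{1-2t}.
\]

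The elementary inequality $-\tfrac{1}{2}\log(1-2t)-t\leq \frac{t^2}{1-2t}$ for $t\in(0,1/2)$, verified by comparing Taylor coefficients or by integration of $2t/(1-2t)$, implies $\Lambda(t)\leq \frac{(p+2\lambda)t^2}{1-2t}=\frac{vt^2/2}{1-ct}$ with $v:=2(p+2\lambda)$ and $c:=2$, the canonical sub-gamma form. For $t\leq 0$, writing $s:=-t\geq 0$ and using $s-\tfrac{1}{2}\log(1+2s)\leq s^2$ together with $(1+2s)^{-1}\leq 1$ yields the sub-Gaussian control $\Lambda(t)\leq (p+2\lambda)t^2$.

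For the upper tail, Chernoff combined with the sub-gamma bound gives $\mathbb{P}(Y-(p+\lambda)\geq u)\leq \exp(-\psi^*(u))$, where $\psi^*$ is the Legendre--Fenchel transform of $\psi(t):=\frac{vt^2/2}{1-ct}$. The substitution $y:=\sqrt{1+2cu/v}$ makes the optimization explicit: the stationary condition $\psi'(t)=u$ becomes a quadratic in $t$ whose root inside $(0,1/c)$ is $t_*=(y-1)/(cy)$, and algebraic simplification then yields $\psi^*(u)=v(y-1)^2/(2c^2)$, which equals $x$ precisely when $u=\sqrt{2vx}+cx=2\sqrt{x(p+2\lambda)}+2x$. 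For the lower tail, Chernoff with the sub-Gaussian control reduces to optimizing a simple quadratic in $s>0$, yielding $\mathbb{P}(Y-(p+\lambda)\leq -u)\leq \exp\bigl(-u^2/(4(p+2\lambda))\bigr)$; setting $u=2\sqrt{x(p+2\lambda)}$ concludes. The only mildly non-routine step is the Legendre--Fenchel inversion for the sub-gamma upper tail, and the quadratic substitution above handles it cleanly; I expect no serious obstacle.
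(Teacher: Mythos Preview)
The paper does not supply its own proof of this lemma; it is stated with a citation to \citet{birge2001alternative} and used as a black box. Your argument is correct and is essentially the standard derivation: compute the log-MGF of the non-central $\chi^2$, bound it by the sub-gamma form $\frac{(p+2\lambda)t^2}{1-2t}$ on the right tail and by the sub-Gaussian form $(p+2\lambda)t^2$ on the left tail, then invert via the Legendre--Fenchel transform. The intermediate identities and inequalities you quote are all fine, and the final substitutions recover exactly the stated deviation thresholds. This is the same route Birg\'e takes, so there is nothing to contrast.
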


%The next lemma can be viewed as a $U$-statistic version of Lemma~\ref{lem:chi-square-tail}.
\begin{lemma}\label{lem:sample-cov-tail}
	Let $X_1,\dots,X_n\stackrel{\mathrm{iid}}{\sim} N_p(\mu, \Sigma)$, and let 
	\[\wh \Sigma:=\frac{1}{n-1}\sum_{i=1}^n(X_i-\bar{X})(X_i-\bar{X})^T,\]
	where $\bar{X} := n^{-1}\sum_{i=1}^n X_i$.  Then for any $x>0$,
	\[\Prob\brac{|\Tr(\wh\Sigma)-\Tr(\Sigma)|\geq 2\Fnorm{\Sigma}\sqrt{\frac{x}{n-1}}+2\opnorm{\Sigma}\frac{x}{n-1}}\leq 2e^{-x}.\]
\end{lemma}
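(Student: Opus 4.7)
The plan is to reduce to a weighted sum of independent chi-squared random variables and then invoke Lemma~\ref{lem:chi-square-tail}. First I would observe that $\Tr(\wh\Sigma) = \frac{1}{n-1}\sum_{i=1}^n \|X_i - \bar X\|^2$ is translation invariant, so I may assume $\mu = 0$ without loss of generality. Writing the eigendecomposition $\Sigma = U\Lambda U^T$ with $\Lambda = \diag(\lambda_1,\ldots,\lambda_p)$, the vectors $Y_i := U^T X_i$ are iid $N_p(0,\Lambda)$, and since Frobenius norms are unitarily invariant, $\Tr(\wh\Sigma) = \frac{1}{n-1}\sum_{i=1}^n \|Y_i - \bar Y\|^2 = \frac{1}{n-1}\sum_{j=1}^p \sum_{i=1}^n (Y_{ij} - \bar Y_j)^2$.

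Since the columns of $\Lambda$ are diagonal, the coordinates $(Y_{ij})_{i \in [n]}$ for different $j$ are independent, and for each $j$ the coordinate-wise sum of squared deviations satisfies $\sum_{i=1}^n (Y_{ij} - \bar Y_j)^2 = \lambda_j Q_j$, where $Q_1,\ldots,Q_p$ are independent $\chi^2_{n-1}$ random variables. Representing each $Q_j = \sum_{k=1}^{n-1} Z_{kj}^2$ with $Z_{kj} \stackrel{\mathrm{iid}}{\sim} N(0,1)$, we obtain
\[
(n-1)\bigl(\Tr(\wh\Sigma) - \Tr(\Sigma)\bigr) = \sum_{j=1}^p \sum_{k=1}^{n-1} \lambda_j (Z_{kj}^2 - 1).
\]

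At this point I would apply Lemma~\ref{lem:chi-square-tail} to the $p(n-1)$ nonnegative weights consisting of each $\lambda_j$ repeated $n-1$ times. These weights have sum $(n-1)\Tr(\Sigma)$, squared sum $(n-1)\fnorm{\Sigma}^2$, and maximum $\opnorm{\Sigma}$, so the lemma yields
\[
\Prob\bigl(\Tr(\wh\Sigma) - \Tr(\Sigma) \geq 2\fnorm{\Sigma}\sqrt{x/(n-1)} + 2\opnorm{\Sigma} x/(n-1)\bigr) \leq e^{-x},
\]
and the matching lower tail (without the $\opnorm{\Sigma}$ term, which only makes the bound tighter) follows from the second inequality in Lemma~\ref{lem:chi-square-tail}. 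A union bound over the two tails gives the factor of $2$. There is no real obstacle here; the only subtlety is making sure that the diagonalization step really does decouple the coordinates (which is immediate once one passes through $U^T$) and correctly bookkeeping the weight list of length $p(n-1)$ when invoking the Laurent--Massart bound.
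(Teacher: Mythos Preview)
Your proof is correct and follows essentially the same approach as the paper: diagonalize $\Sigma$, represent $(n-1)\Tr(\wh\Sigma)$ as a sum of independent $\lambda_j\chi^2_{n-1}$ variables, and apply Lemma~\ref{lem:chi-square-tail} with the $p(n-1)$ weights $\lambda_j$ each repeated $n-1$ times. Your explicit mention of translation invariance and the careful bookkeeping of the weight sums are fine additions but do not change the argument.
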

\begin{proof}%[Proof of Lemma~\ref{lem:sample-cov-tail}]
	After an orthogonal transformation, we may assume without loss of generality that $\Sigma$ is diagonal, with non-negative diagonal entries $\lambda_1,\dots,\lambda_p$, say. Then 
	\[\Tr(\wh\Sigma)=\frac{1}{n-1}\sum_{j=1}^p\sum_{i=1}^n\bigl\{X_i(j)-\bar X(j)\bigr\}^2.\]
	Since for $\lambda_j \neq 0$, we have $\sum_{i=1}^n \bigl\{X_i(j)-\bar X(j)\bigr\}^2/\lambda_j \sim \chi^2_{n-1}$, independently for $j\in[p]$, we have
	\[\Tr(\wh\Sigma)\stackrel{\mathrm{d}}{=} \frac{1}{n-1}\sum_{i=1}^{n-1}\sum_{j=1}^{p} \lambda_jZ_{ij}^2,\]
	where $\{Z_{ij}\}_{i \in [n-1],j \in [p]} \stackrel{\mathrm{iid}}{\sim} N(0,1)$. Then Lemma~\ref{lem:chi-square-tail} implies the result.
\end{proof}

The next four lemmas are properties of the truncated non-central chi-squared distribution.  Recall that $\nu_a = \mathbb{E}\bigl(Z^2 \bigm| |Z| \geq a\bigr)$, where $Z \sim N(0,1)$.  
\begin{lemma}\label{Lemma:muabounds}
	The function $a \mapsto \nu_a$ is strictly increasing on $[0,\infty)$, so that $\nu_a \leq \nu_1 \leq 3$ for all $a \in [0,1]$, and $\nu_a\leq 6$ for all $a\in[0,2]$.  Moreover, the function $a \mapsto \nu_a/a^2$ is strictly decreasing on $(0,\infty)$, so that $\nu_a/a^2 \leq \nu_1 \leq 3$ for all $a \geq 1$, and we also have $\nu_a\leq a^2+2$ for all $a > 0$.  Finally, writing $\gamma_a := \mathbb{E}\bigl(Z^4 \bigm| |Z| \geq a\bigr)$, where $Z \sim N(0,1)$, the function $a \mapsto \gamma_a/a^4$ is strictly decreasing on $(0,\infty)$, so that $\gamma_a/a^4 \leq \gamma_1 \leq 11$ for all $a \geq 1$.
\end{lemma}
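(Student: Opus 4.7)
The plan is to first establish the identities
\[
\nu_a \;=\; 1 + a\,h(a), \qquad \gamma_a \;=\; 3 + (a^2+3)\,a\,h(a) \;=\; (a^2+3)\nu_a - a^2,
\]
where $h(a) := \phi(a)/\{1-\Phi(a)\}$ is the standard-normal hazard function, by integrating $\int_a^\infty x^2\phi(x)\,dx$ and $\int_a^\infty x^4\phi(x)\,dx$ by parts via $\phi'(x) = -x\phi(x)$. Strict monotonicity of $a\mapsto\nu_a$ then follows from the direct calculation $\nu_a' = h(a)(\nu_a - a^2)$, which is positive because $\nu_a = \E(Z^2\mid |Z|\geq a) > a^2$ for every $a\geq 0$.

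For the bound $\nu_a \leq a^2 + 2$, I would invoke the standard Mills-ratio inequality $1-\Phi(a) \geq \phi(a)(\sqrt{a^2+4}-a)/2$, which gives $h(a) \leq (a+\sqrt{a^2+4})/2$ and hence $a h(a) \leq a^2+1$ via $\sqrt{a^2+4}\leq a+2/a$. In particular $\nu_1 \leq 3$, and the bound $\nu_a \leq \nu_2 \leq 2^2+2=6$ on $[0,2]$ follows from the monotonicity of $\nu_a$. Once one knows that $\nu_a/a^2$ is decreasing on $(0,\infty)$ (established below), this also gives $\nu_a/a^2 \leq \nu_1 \leq 3$ for $a\geq 1$, and the corresponding bound $\gamma_1 \leq 11$ is immediate from the identity $\gamma_1 = 4\nu_1 - 1$.

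The core of the proof is the strict decreasingness of $\nu_a/a^2$ and $\gamma_a/a^4$, which I would obtain simultaneously via a stochastic-dominance statement: conditional on $|Z|\geq a$, the law of $Z^2/a^2$ is stochastically decreasing in $a\in(0,\infty)$. Writing the conditional CDF for $u\geq 1$ as
\[
F(u;a) \;=\; 1 - \frac{1-\Phi(a\sqrt{u})}{1-\Phi(a)},
\]
a direct differentiation yields
\[
\partial_a \log\biggl\{\frac{1-\Phi(a\sqrt{u})}{1-\Phi(a)}\biggr\} \;=\; h(a) - \sqrt{u}\,h(a\sqrt{u}),
\]
which is non-positive for $u\geq 1$ and strictly negative for $u>1$, because $h$ is strictly increasing (a short consequence of $h'(a) = h(a)(h(a)-a) > 0$, using Mills' inequality $h(a)>a$). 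Hence $F(u;a)$ is strictly increasing in $a$ for every $u>1$, which is the claimed stochastic dominance. Applying it to the strictly increasing functions $u\mapsto u$ and $u\mapsto u^2$ yields the strict decreasingness of $\nu_a/a^2 = \E(Z^2/a^2 \mid |Z|\geq a)$ and $\gamma_a/a^4 = \E\bigl((Z^2/a^2)^2 \mid |Z|\geq a\bigr)$.

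The main obstacle, or rather the main insight needed, is spotting the stochastic-dominance viewpoint. Tackled by direct differentiation, the monotonicity of $\gamma_a/a^4$ instead reduces to checking the sign of a quadratic polynomial in $\nu_a$ on the interval $(a^2,a^2+2]$, which can be done using the identity $\gamma_a = (a^2+3)\nu_a - a^2$ together with the bound $\nu_a\leq a^2+2$, but is computationally unilluminating.
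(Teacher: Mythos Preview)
Your proof is correct, but your route to the monotonicity of $\nu_a/a^2$ and $\gamma_a/a^4$ differs from the paper's. The paper establishes $\nu_a$ increasing and $\nu_a/a^2$ decreasing by direct differentiation of $\log\nu_a$ and $\log(\nu_a/a^2)$, expanding the numerator of the latter derivative and bounding it using the Gordon (1941) Mills-ratio inequality $h(a)\leq a+1/a$; the statement for $\gamma_a/a^4$ is then dismissed as ``very similar arguments, omitted for brevity''. Your stochastic-dominance argument --- showing that the conditional law of $Z^2/a^2$ given $|Z|\geq a$ is stochastically decreasing in $a$ via monotonicity of the hazard $h$ --- is more conceptual and handles $\nu_a/a^2$ and $\gamma_a/a^4$ simultaneously (and indeed any increasing moment functional of $Z^2/a^2$), so it genuinely fills in what the paper leaves implicit. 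For $\nu_a\leq a^2+2$ you invoke the Birnbaum bound $h(a)\leq(a+\sqrt{a^2+4})/2$ in place of the paper's $h(a)\leq a+1/a$; both yield $ah(a)\leq a^2+1$ and hence the same conclusion. Your derivation of $\gamma_1\leq 11$ from the identity $\gamma_1=4\nu_1-1$ is a small improvement over checking $\gamma_1$ numerically.
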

\begin{proof}%[Proof of Lemma~\ref{Lemma:muabounds}]
	First note that $\nu_a = \int_a^\infty x^2 \phi(x) \, dx/\bar{\Phi}(a)$, where $\phi$ denotes the standard normal density function, and where $\bar{\Phi}(a) := \int_a^\infty \phi(x) \, dx$.  Hence, for any $a > 0$, we have
	\[
	\frac{d}{da} \log \nu_a = \frac{-a^2\phi(a)}{\int_{a}^\infty x^2\phi(x) \, dx} + \frac{\phi(a)}{\bar{\Phi}(a)}=\frac{\phi(a)\int_{a}^\infty (x^2-a^2)\phi(x) \, dx}{\bar{\Phi}(a)\int_{a}^\infty x^2\phi(x) \, dx} > 0,
	\]
	which proves the first claim.  For the second claim, let
	\[
	g(a) := \log \frac{\nu_a}{a^2} = - 2 \log a + \log \int_a^\infty x^2 \phi(x) \, dx - \log \bar{\Phi}(a).
	\]
	Then
	\begin{align*}
	g'(a) &= -\frac{2}{a} - \frac{a^2\phi(a)}{\int_a^\infty x^2 \phi(x) \, dx} + \frac{\phi(a)}{\bar{\Phi}(a)} \\
	&= \frac{-2\bar{\Phi}(a) \int_a^\infty x^2 \phi(x) \, dx - a^3 \phi(a)\bar{\Phi}(a) + a\phi(a)\int_a^\infty x^2 \phi(x) \, dx}{a\bar{\Phi}(a)\int_a^\infty x^2 \phi(x) \, dx}.
	\end{align*}
	The denominator of this expression is positive, and, after integrating by parts and writing $h(a) := \phi(a)/\bar{\Phi}(a)$, the numerator is
	\begin{align*}
	-a\phi(a)\bar{\Phi}(a) - 2\bar{\Phi}(a)^2 &- a^3\phi(a)\bar{\Phi}(a) + a^2 \phi(a)^2 \\
	= &\bar{\Phi}(a)^2\{-ah(a) - 2 - a^3h(a) + a^2h(a)^2\} \leq -2\bar{\Phi}(a)^2,
	\end{align*}
	where the final inequality uses the standard Mills ratio bound $h(a) \leq a + 1/a$ for $a > 0$ \citep{Gordon1941}.  This proves the second claim that $a \mapsto \nu_a/a^2$ is strictly decreasing on $(0,\infty)$ and $\nu_a/a^2 \leq \nu_1 \leq 3$ for all $a \geq 1$.  %To derive the bound for $a\geq 2$, we use the standard lower bound for Gaussian tail \citep[Page 116]{ross2007second} that $\bar{\Phi}(a)\geq\left(\frac{1}{a}-\frac{1}{a^3}\right)\phi(a)$. This leads to the bound
%	$$\nu_a=\frac{\int_a^\infty x^2 \phi(x) \, dx}{\bar{\Phi}(a)}=\frac{a\phi(a)+\bar{\Phi}(a)}{\bar{\Phi}(a)}\leq 1+\frac{a^2}{1-a^{-2}}.$$
%	By the inequality $\frac{1}{1-x}\leq 1+2x$ when $x\leq 1/4$, we obtain the bound $\nu_a\leq a^2+3$ for  $a\geq 2$.
%	The final claim follows using a very similar argument, and is omitted for brevity.
  For the next claim, note that
    \begin{equation}
      \label{Eq:nuabound}
          \nu_a = 1 + \frac{a\phi(a)}{\bar{\Phi}(a)} \leq a^2 + 2
        \end{equation}
        for all $a > 0$, by the same Mills ratio bound as above.  The final claim follows using very similar arguments, and is omitted for brevity. 
     \end{proof}

\iffalse
 \begin{lemma}\label{lem:truncated-tail}
    	Let $Z_j$ be independent standard normal random variable for $j=1,\dots,p$. Define $\nu_a = \mathbb{E}\bigl(Z^2 \bigm| |Z| \geq a\bigr)$ when $Z\sim N(0,1)$ and $a>0$. Then there exists a constant $C$ such that
    	\[\mathbb{P}\biggl(\sum_{j=1}^p(Z_j^2-\nu_a)\mathbbm{1}_{\{|Z_j|\geq a\}}\geq C\Bigl(\sqrt{pe^{-a^2/2}x}+x\Bigr)\biggr)\leq e^{-x},\]
    	for any $x\geq 0$ and $a>0$. 
    \end{lemma}
\fi    

\begin{lemma}\label{lem:trunc-chi-square-tail}
	Let $Z_1,\ldots,Z_p \stackrel{\mathrm{iid}}{\sim} N(0,1)$. Then there exists a universal constant $C^*>0$ such that for any $a>0$ and $x>0$, we have
	$$\mathbb{P}\biggl(\sum_{j=1}^p(Z_j^2-\nu_a)\mathbbm{1}_{\{|Z_j|\geq a\}} \geq C^*\Bigl(\sqrt{pe^{-a^2/2}x}+x\Bigr)\biggr) \leq e^{-x}.$$
        In fact, we may take $C^* = 9$.
\end{lemma}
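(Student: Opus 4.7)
The plan is a Chernoff--Cram\'er argument on the i.i.d.\ centered random variables $W_j := (Z_j^2 - \nu_a)\mathbbm{1}_{\{|Z_j|\geq a\}}$; centering holds because $\nu_a = \mathbb{E}(Z^2 \mid |Z|\geq a)$. Writing $\phi(\lambda) := \mathbb{E}[e^{\lambda W_1}]$, independence and Markov's inequality give, for any $\lambda > 0$,
\[
\mathbb{P}\biggl(\sum_{j=1}^p W_j \geq t\biggr)\leq \exp\bigl(-\lambda t + p\log \phi(\lambda)\bigr).
\]
The first step is to obtain a closed form for $\phi$: writing $q := 2\bar\Phi(a) = \mathbb{P}(|Z|\geq a)$ and performing the variance-stabilising substitution $u = z\sqrt{1-2\lambda}$ inside the truncated Gaussian integral shows that, for $\lambda \in (0,1/2)$,
\[
\phi(\lambda) = (1 - q) + \frac{e^{-\lambda \nu_a}}{\sqrt{1-2\lambda}}\cdot 2\bar\Phi\bigl(a\sqrt{1-2\lambda}\bigr).
\]

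The heart of the proof is a quadratic MGF estimate of the form $\log \phi(\lambda) \leq c_1 \lambda^2 e^{-a^2/2}$, valid uniformly for $\lambda \in (0, 1/4]$ and all $a > 0$, with $c_1$ an explicit absolute constant. Because $\mathbb{E}W_1 = 0$, the Taylor expansion of $\phi$ about $\lambda = 0$ has no linear term, so the content of this bound is really twofold: a second-moment estimate $\mathbb{E}W_1^2 = q(\gamma_a - \nu_a^2)\lesssim e^{-a^2/2}$, and a uniform sub-exponential remainder control on $\lambda \in (0, 1/4]$. Both can be extracted directly from the closed form above. The bounds $\nu_a\leq a^2+2$ and $\gamma_a/a^4 \leq 11$ for $a \geq 1$ provided by Lemma~\ref{Lemma:muabounds}, together with the Mills ratio inequality $\bar\Phi(a) \leq \phi(a)/a$, give the second-moment estimate. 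For the remainder, I would split the range of $a$ into $a \leq 1$ (where $q$ is of order one, so the summand is essentially a bounded perturbation of $\chi_1^2 - 1$ and direct expansion with $1/\sqrt{1-2\lambda}\leq\sqrt{2}$ suffices) and $a > 1$ (where one exploits the near-cancellation between the decaying factor $e^{-\lambda \nu_a}\approx e^{-\lambda a^2}$ and the growth of $\bar\Phi(a\sqrt{1-2\lambda})/\bar\Phi(a)$ induced by the change of variable).

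With the MGF estimate in hand, writing $V := c_1 p e^{-a^2/2}$, the Chernoff exponent becomes $\lambda t - V\lambda^2$, and one optimises over $\lambda \in (0, 1/4]$. When $x \leq V/16$ the unconstrained minimiser $\lambda_\star = t/(2V)$ is admissible and yields a sub-Gaussian bound, forcing $t \geq 2\sqrt{Vx}$; when $x > V/16$ one takes $\lambda = 1/4$ and uses the inequality $V/4 < 4x$ to obtain the weaker requirement $t \geq 4x + V/4 \leq 8x$. Combining the two regimes gives a bound of the form $C^*\bigl(\sqrt{pe^{-a^2/2}x} + x\bigr)$ with $C^* = \max(2\sqrt{c_1},\,8)$, so that $C^* = 9$ follows provided the MGF estimate can be established with $c_1 \leq 81/4$.

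The main obstacle is precisely the MGF estimate with explicit constants. Uniformity in $a$ is genuinely delicate because $\phi$ changes character as $a$ varies from $0$ to $\infty$: the unified quadratic-in-$\lambda$ bound must simultaneously absorb the variance-like behaviour for small $a$ (where $q$ is of order one and the summands are effectively bounded) and the exponentially-small-mass behaviour for large $a$ (where all of $q$, $\nu_a$ and $\gamma_a$ blow up or shrink, but only the product $q(\gamma_a-\nu_a^2)$ of order $a^2 e^{-a^2/2}$ and ultimately $e^{-a^2/2}$ after absorbing the polynomial factor matters). Managing these in a single clean bound with tracked constants is the delicate part of the argument.
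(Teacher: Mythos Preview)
Your overall architecture is correct and matches the paper's: Chernoff bound, reduce to a quadratic MGF estimate $\log\phi(\lambda)\leq c_1\lambda^2 e^{-a^2/2}$ for $\lambda\in(0,1/4]$, then optimise over $\lambda$. Your closed form for $\phi$ is correct, and your final optimisation step is essentially the paper's (the paper obtains $c_1=20$, whence $2\sqrt{c_1}<9$).

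The genuine difference is in how the MGF bound is established. You propose to analyse the exact expression $(1-q)+e^{-\lambda\nu_a}(1-2\lambda)^{-1/2}\cdot 2\bar\Phi(a\sqrt{1-2\lambda})$ and, as you recognise, this forces a case split in $a$ with separate constant-tracking in each regime; you leave this as the acknowledged hard step. The paper bypasses the closed form entirely: writing $X=(Z^2-\nu_a)\mathbbm{1}_{\{|Z|\geq a\}}$, it uses $\mathbb{E}e^{\lambda X}=1+\mathbb{E}(e^{\lambda X}-1-\lambda X)$ and the pointwise inequality
\[
e^y-1-y\leq\begin{cases}(-y)\wedge y^2,&y<0,\\ y^2,&0\leq y\leq 1,\\ e^y,&y>1,\end{cases}
\]
splitting the expectation according to the \emph{value of $\lambda X$} rather than $a$. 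The three resulting terms are bounded by $(\nu_a-a^2)^2\mathbb{P}(|Z|\geq a)\leq 4e^{-a^2/2}$, by $\int_{\nu_a}^\infty(x-\nu_a)^2 p_{\chi_1^2}(x)\,dx\lesssim e^{-\nu_a/2}$, and by $\int_{\nu_a+1/\lambda}^\infty e^{\lambda x}p_{\chi_1^2}(x)\,dx\lesssim \lambda^2 e^{-\nu_a/2}$ respectively, each uniformly in $a>0$. This delivers $\phi(\lambda)\leq 1+20\lambda^2 e^{-a^2/2}$ with no case analysis in $a$, which is precisely the uniformity you flagged as delicate. So your route is workable but heavier; the paper's device of decomposing by the magnitude of $\lambda X$ rather than by $a$ is what makes the explicit constant come out cleanly.
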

\begin{proof}
    Consider $X=(Z^2-\nu_a)\mathbbm{1}_{\{|Z|\geq a\}}$ with $Z\sim N(0,1)$, and we first derive a bound for the moment generating function $\mathbb{E}(e^{\lambda X})$. Since $\mathbb{E}(X)=0$, we have
    	\[\mathbb{E}(e^{\lambda X})=\mathbb{E}\bigl\{1+\lambda X+(e^{\lambda X}-1-\lambda X) \bigr\} =1+\mathbb{E}\bigl(e^{\lambda X}-1-\lambda X\bigr).\]
By the deterministic bound
    	\[
    	e^x-1-x \leq 
    	\begin{cases}
    	(-x)\wedge x^2,         & \text{if $x<0$,} \\%[1ex]
    	x^2, & \text{if $0\leq x\leq 1$,}\\
    	e^x,            & \text{if $x\geq 1$,}
    	\end{cases}
    	\]
    	we have
    	\[\mathbb{E}\bigl(e^{\lambda X}-1-\lambda X\bigr)\leq \lambda^2\mathbb{E}\bigl(X^2\mathbbm{1}_{\{X<0\}}\bigr)+\lambda^2\mathbb{E}\bigl(X^2\mathbbm{1}_{\{0\leq X\leq 1/\lambda\}}\bigr)+\mathbb{E}\bigl(e^{\lambda X}\mathbbm{1}_{\{X> 1/\lambda\}}\bigr),\]
	and we will bound the three terms separately.
    	By Lemma~\ref{Lemma:muabounds}, the first term is bounded as
    	\[\mathbb{E}\bigl(X^2\mathbbm{1}_{\{X<0\}}\bigr) \leq \mathbb{P}(X<0)(\nu_a-a^2)^2\leq \mathbb{P}(|Z|\geq a)(\nu_a-a^2)^2\leq 4\exp(-a^2/2).\]
    	To bound the second term, note that by a change of variable argument, 
    	\[\int_{\nu_a}^{\infty}(x-\nu_a)^2e^{-x/2} \, dx=e^{-\nu_a/2}\int_{0}^{\infty}y^2e^{-y/2} \, dy = 16e^{-\nu_a/2}.\] 
    	Letting $p(\cdot)$ be the density function of $\chi^2_1$, the second term is therefore bounded as
    	\begin{align*}
          \mathbb{E}\bigl(X^2\mathbbm{1}_{\{0<X\leq 1/\lambda\}}\bigr)\leq \mathbb{E}\bigl(X^2\mathbbm{1}_{\{X>0\}}\bigr)= \int_{\nu_a}^{\infty}(x-\nu_a)^2p(x) \, dx &\leq \frac{16}{\sqrt{2\pi \nu_a}} \exp(-\nu_a/2) \\
          &\leq \frac{16}{\sqrt{2\pi} \exp(-\nu_a/2)}.\end{align*}
For the third term, for $\lambda\leq 1/4$, we have
    	\begin{align*}
    	\mathbb{E}\bigl(e^{\lambda X}\mathbbm{1}_{\{X> 1/\lambda\}}\bigr)&=e^{-\lambda\nu_a}\int_{\nu_a+\frac{1}{\lambda}}^{\infty}e^{\lambda x}p(x) \, dx\leq \frac{e^{-\lambda \nu_a}e^{-(\frac{1}{2}-\lambda)(\frac{1}{\lambda}+\nu_a)}}{\sqrt{2\pi(\nu_a+1/\lambda)}(1/2 - \lambda)}\\
    	&\leq \frac{4e}{\sqrt{2\pi}} e^{-\nu_a/2}e^{-\frac{1}{2\lambda}}\leq \frac{19}{2} \lambda^2e^{-\nu_a/2},
    	\end{align*}
    	where in the last step we have used the facts that $\lambda\leq 1/4$ and $\lambda \mapsto e^{-\frac{1}{2\lambda}}/\lambda^2$ is increasing. Hence, when $\lambda\leq 1/4$, we have
    	\[\mathbb{E}e^{\lambda X}\leq 1+ 20\lambda^2e^{-a^2/2} \leq\exp\bigl(20\lambda^2e^{-a^2/2}\bigr).\]
	Then, for any $t>0$, we have
	\begin{align*}
	\mathbb{P}\biggl(\sum_{j=1}^p(Z_j^2-\nu_a)\mathbbm{1}_{\{|Z_j|\geq a\}} >t\biggr) &\leq \inf_{\lambda \leq 1/4} e^{-\lambda t}\bigl(\mathbb{E}e^{\lambda X}\bigr)^p \leq \inf_{\lambda \leq 1/4} \exp\bigl(-\lambda t + 20p\lambda^2e^{-a^2/2}\bigr) \\
	&= \exp\biggl\{-\biggl(\frac{t^2e^{a^2/2}}{80p}\wedge\frac{t}{8}\biggr)\biggr\}.\end{align*}
	Setting $t= 9\bigl(\sqrt{pe^{-a^2/2}x}+x\bigr)$, we obtain the desired result.
    \end{proof}

\begin{lemma}\label{lem:trunc-chi-square-mean}
	Let $Y\sim N(\theta,1)$.  Then there exists a universal constant $C>0$ such that for every $a \geq 1$,
	\begin{equation}
	\mathbb{E}\bigl\{(Y^2-\nu_a)\mathbbm{1}_{\{|Y|\geq a\}}\bigr\} \begin{cases}
	=0, & \mbox{if $\theta=0$,} \\
	\in [0,C^2a^2+1], & \mbox{if $|\theta|<Ca$}, \\
	\geq \theta^2/2, & \mbox{if $|\theta| \geq Ca$.}
	\end{cases}\label{eq:alt-sig-con}
	\end{equation}
	In fact, we may take $C = 8$. Moreover, for any $\delta>0$, there exist constants $c_1^*,C_1^*>0$, such that as long as $|\theta|\geq (1+\delta)a$ and $a>C_1^*$, we have
	\begin{equation}
	\mathbb{E}\bigl\{(Y^2-\nu_a)\mathbbm{1}_{\{|Y|\geq a\}}\bigr\}\geq c_1^*\theta^2.\label{eq:alt-sig-sharp}
	\end{equation}
\end{lemma}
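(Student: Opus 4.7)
The plan is to work throughout with the exact identity
\begin{equation*}
F(\theta) := \mathbb{E}\bigl\{(Y^2-\nu_a)\mathbbm{1}_{\{|Y|\geq a\}}\bigr\} = \theta^2 + 1 - \mathbb{E}\bigl[Y^2\mathbbm{1}_{\{|Y|<a\}}\bigr] - \nu_a\,\mathbb{P}(|Y|\geq a),
\end{equation*}
which follows from $\mathbb{E}Y^2 = \theta^2+1$ and splitting the indicator. The case $\theta = 0$ is immediate from the definition of $\nu_a$, and the upper bound in the middle regime is equally trivial, since $F(\theta) \leq \mathbb{E}Y^2 = \theta^2 + 1 \leq C^2 a^2 + 1$.

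For the nonnegativity in the middle regime, I plan to use the symmetrisation
\begin{equation*}
F(\theta) = 2e^{-\theta^2/2}\int_a^\infty (y^2-\nu_a)\phi(y)\bigl[\cosh(y\theta) - 1\bigr]\,dy,
\end{equation*}
obtained from $\phi(y-\theta)+\phi(y+\theta) = 2\phi(y)e^{-\theta^2/2}\cosh(y\theta)$ together with the defining identity $\int_a^\infty (y^2-\nu_a)\phi(y)\,dy = 0$. Since $y \mapsto \cosh(y\theta) - 1$ is nondecreasing on $[0,\infty)$ while $(y^2-\nu_a)\phi(y)$ changes sign exactly once in $[a,\infty)$, at $y = \sqrt{\nu_a}$, replacing $\cosh(y\theta) - 1$ throughout by its value at $y = \sqrt{\nu_a}$ decreases the integrand on each side of that crossover, so that $F(\theta) \geq 2e^{-\theta^2/2}\bigl[\cosh(\sqrt{\nu_a}\,|\theta|) - 1\bigr]\int_a^\infty(y^2-\nu_a)\phi(y)\,dy = 0$.

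For the third case, I combine the decomposition above with the Mills-ratio bound $\nu_a \leq a^2 + 2$ from Lemma~\ref{Lemma:muabounds} (see~\eqref{Eq:nuabound}) and the crude estimate $\mathbb{E}[Y^2\mathbbm{1}_{\{|Y|<a\}}] \leq a^2$, to obtain $F(\theta) \geq \theta^2 - 2a^2 - 1$. For $|\theta| \geq 8a$ and $a \geq 1$, this is at least $\theta^2/2$, pinning down $C = 8$.

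For the sharp bound~\eqref{eq:alt-sig-sharp}, the wasteful step above is the use of $\mathbb{P}(|Y|<a) \leq 1$. Under $|\theta| \geq (1+\delta)a$, a Gaussian tail bound gives $\mathbb{P}(|Y|<a) \leq \Phi(-\delta a) \leq e^{-\delta^2 a^2/2}$, hence $\mathbb{E}[Y^2\mathbbm{1}_{\{|Y|<a\}}] \leq a^2 e^{-\delta^2 a^2/2}$, which is negligible once $a$ is large. Combining with $\nu_a\mathbb{P}(|Y|\geq a) \leq a^2 + 2$ yields $F(\theta) \geq \theta^2 - a^2 - 2 - a^2 e^{-\delta^2 a^2/2}$, and since $\theta^2 \geq (1+\delta)^2 a^2$, this is at least $c_1^*\theta^2$ with $c_1^* := \tfrac{1}{2}\bigl(1 - (1+\delta)^{-2}\bigr)$ provided $a > C_1^*(\delta)$ is sufficiently large. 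The main obstacle throughout is the nonnegativity in the middle regime, because the integrand in $F(\theta)$ is not pointwise of one sign; once the $\cosh$-symmetrisation is written down, the monotonicity of $\cosh(y\theta) - 1$ in $y$ makes the rearrangement essentially a one-line argument.
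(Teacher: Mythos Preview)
Your proof is correct and is, in several places, more elementary than the paper's. The paper works with the same complementary decomposition $F(\theta) = \theta^2 + 1 - \nu_a - \mathbb{E}\bigl\{(Y^2-\nu_a)\mathbbm{1}_{\{|Y|<a\}}\bigr\}$, but for the regime $|\theta|\geq Ca$ it bounds the last term by Cauchy--Schwarz together with a Chebychev bound on $\mathbb{P}(|Y|<a)$, arriving at the somewhat unwieldy inequality $F(\theta)\geq \theta^2+1-3a^2-(\theta^2+3+3a^2)\sqrt{2(1+2\theta^2)}/(\theta^2+1-a^2)$ before specialising to $|\theta|\geq 8a$. Your route via $\mathbb{E}[Y^2\mathbbm{1}_{\{|Y|<a\}}]\leq a^2$ and $\nu_a\leq a^2+2$ gives the clean bound $F(\theta)\geq \theta^2-2a^2-1$ in one line, and in fact shows that any $C\geq\sqrt{6}$ would do. For the nonnegativity in the middle regime, the paper simply asserts that the random variable $(Y^2-\nu_a)\mathbbm{1}_{\{|Y|\geq a\}}$ is stochastically increasing in $|\theta|$; your $\cosh$-symmetrisation with the single-crossing rearrangement makes this completely explicit and avoids having to justify that stochastic monotonicity (which is not entirely immediate, since the map $y\mapsto (y^2-\nu_a)\mathbbm{1}_{\{y\geq a\}}$ is not monotone). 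For~\eqref{eq:alt-sig-sharp}, the paper instead writes $F(\theta)=\mathbb{P}(|Y|\geq a)\bigl\{\mathbb{E}(Y^2\mid |Y|\geq a)-\nu_a\bigr\}$, lower-bounds the conditional second moment by $\bigl\{\mathbb{E}(Y\mid |Y|\geq a)\bigr\}^2\geq\theta^2$, and uses $\nu_a\leq (1+\delta)a^2$ for $a\geq\sqrt{2/\delta}$; your Gaussian-tail bound on $\mathbb{P}(|Y|<a)$ achieves the same conclusion with a different explicit $c_1^*$. Both approaches are short; yours is more uniform across the three regimes because it reuses the same decomposition throughout.
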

\begin{proof}%[Proof of Lemma \ref{lem:trunc-chi-square-mean}]
	The fact that $\mathbb{E}\bigl\{(Y^2-\nu_a)\mathbbm{1}_{\{|Y|\geq a\}}\bigr\} = 0$ when $\theta=0$ follows by definition of~$\nu_a$.  %To analyze the case $\theta^2\geq Ca^2$, %we first introduce the following inequalities. By an integration by parts argument, there exists universal constants $C_1,C_2>0$, such that
	%\begin{equation}
	%C_1a^{k-1}e^{-a^2/2} \leq \int_a^{\infty} x^k\phi(x)dx \leq C_2a^{k-1}e^{-a^2/2}, \label{eq:prob2}
	%\end{equation}
	%for all $a\geq 1$ and $k\in\{0,1,2,3,4\}$, where $\phi(\cdot)$ is the density function of $N(0,1)$.
	%  since $$\mathbb{E}\bigl\{(Y^2-\nu_a)\mathbbm{1}_{\{|Y|\geq a\}}\bigr\}=\mathbb{E}(Y^2) - \nu_a - \mathbb{E}\bigl\{(Y^2-\nu_a)\mathbbm{1}_{\{|Y|< a\}}\bigr\},$$ in order to lower bound $\mathbb{E}(Y^2-\nu_a)\indc{Y^2\geq a^2}$, it is sufficient to upper bound $\nu_a$ and $\mathbb{E}(Y^2-\nu_a)\indc{Y^2< a^2}$. %Using (\ref{eq:prob2}), we have
	%\begin{equation}
	%\nu_a=\frac{\int_{|x|\geq a}x^2\phi(x)dx}{\int_{|x|\geq a}\phi(x)dx}\lesssim a^2.\label{eq:mu-bound}
	%\end{equation}
	To analyze the case $|\theta|\geq Ca$, observe that by Cauchy--Schwarz, Lemma~\ref{Lemma:muabounds} and Chebychev's inequality, for all $a \in \bigl[1,(\theta^2+1)^{1/2}\bigr)$,
	\begin{align*}
	\mathbb{E}&\bigl\{(Y^2-\nu_a)\mathbbm{1}_{\{|Y| < a\}}\bigr\} \\
	&\leq \sqrt{\mathbb{E}(Y^4) + \nu_a^2}\sqrt{\mathbb{P}(|Y|<a)} \leq \sqrt{\theta^4+6\theta^2+3+9a^4} \sqrt{\mathbb{P}(Y^2<a^2)} \\
	&\leq (\theta^2+3+3a^2)\frac{\sqrt{\Var(Y^2)}}{\theta^2+1-a^2} = (\theta^2+3+3a^2)\frac{\sqrt{2(1+2\theta^2)}}{\theta^2+1-a^2}.
	\end{align*}
	Therefore, for all $a \in \bigl[1,(\theta^2+1)^{1/2}\bigr)$,
	\begin{align*}
	\mathbb{E}\bigl\{(Y^2-\nu_a)\mathbbm{1}_{\{|Y| \geq a\}}\bigr\} &= \theta^2 + 1 - \nu_a - \mathbb{E}\bigl\{(Y^2-\nu_a)\mathbbm{1}_{\{|Y| < a\}}\bigr\} \\
	&\geq \theta^2+1-3a^2-(\theta^2+3+3a^2)\frac{\sqrt{2(1+2\theta^2)}}{\theta^2+1-a^2}.
	\end{align*}
	Thus, for $C > 1$ and $1 \leq a \leq |\theta|/C$,
	\begin{align*}
	\mathbb{E}\bigl\{(Y^2-\nu_a)\mathbbm{1}_{\{|Y| \geq a\}}\bigr\}& \geq \theta^2 - \frac{3\theta^2}{C^2} - \frac{(1+6/C^2)\theta^2 \times 3|\theta|}{(1-1/C^2)\theta^2} \\
	&\geq \theta^2\biggl(1 - \frac{3}{C^2} - \frac{(1+6/C^2)3/C}{(1-1/C^2)}\biggr),
	\end{align*}
	which is at least $\theta^2/2$, provided that $C \geq 8$.  Finally, when $0<|\theta|<Ca$, we have
	$$\mathbb{E}\bigl\{(Y^2-\nu_a)\mathbbm{1}_{\{|Y| \geq a\}}\bigr\} \leq \mathbb{E}(Y^2) =\theta^2+1\leq C^2a^2+1,$$
	while the fact that $\mathbb{E}\bigl\{(Y^2-\nu_a)\mathbbm{1}_{\{|Y| \geq a\}}\bigr\} \geq 0$ follows because the expression inside the expectation is stochastically increasing in $|\theta|$.
	
Finally, we prove (\ref{eq:alt-sig-sharp}). Given (\ref{eq:alt-sig-con}), we only need to consider $(1+\delta)a\leq |\theta|<Ca$. Note that $\mathbb{E}\bigl\{(Y^2-\nu_a)\mathbbm{1}_{\{|Y|\geq a\}}\bigr\}=\mathbb{P}(|Y|\geq a)\bigl\{\mathbb{E}\bigl(Y^2\bigm||Y|\geq a\bigr)-\nu_a\bigr\}$. The condition $|\theta|\geq (1+\delta)a$ implies that
        \[
          \mathbb{P}(|Y|\geq a) = \bar{\Phi}(a - \theta) + \Phi\bigl(-(a+\theta)\bigr) \geq 1/2
        \]
        and
	\begin{align*}\mathbb{E}\bigl(Y^2\bigm| |Y|\geq a\bigr) &\geq \bigl\{\mathbb{E}\bigl(Y\bigm| |Y|\geq a\bigr)\bigr\}^2 \\
          &= \biggl(\theta + \frac{\int_{a-\theta}^\infty z \phi(z) \, dz + \int_{-\infty}^{-(a+\theta)} z \phi(z) \, dz}{\mathbb{P}(|Y| \geq a)}\biggr)^2 \geq \theta^2.\end{align*}
	Therefore, $\mathbb{E}\bigl\{(Y^2-\nu_a)\mathbbm{1}_{\{|Y|\geq a\}}\bigr\}\geq (\theta^2-\nu_a)/2$.  From~\eqref{Eq:nuabound}, we see that for $a \geq \sqrt{2/\delta}$, we have
        \[
          \mathbb{E}\bigl\{(Y^2-\nu_a)\mathbbm{1}_{\{|Y|\geq a\}}\bigr\} \geq \frac{1}{2}\bigl\{\theta^2 - (1 + \delta)a^2\bigr\} \geq \frac{\theta^2}{2}\biggl(1 - \frac{1}{1+\delta}\biggr) = \frac{\delta}{2(1+\delta)}\theta^2,
        \]
        as required.
%        By the condition $(1+\delta)a\leq |\theta|<Ca$ and the fact that $\lim_{a\rightarrow\infty}\nu_a/a^2\rightarrow 1$, there exist $C_1,C_2>0$, such that (\ref{eq:alt-sig-sharp}) holds as long as $a>C_2$.
\end{proof}

\begin{lemma}\label{lem:trunc-chi-square-var}
	Let $Y\sim N(\theta,1)$.  Then there exists a universal constant $C_1\geq 1$ such that
	$$\Var\bigl\{(Y^2-\nu_a)\mathbbm{1}_{\{|Y|\geq a\}}\bigr\} \leq \begin{cases}
	C_1a^3e^{-a^2/2} & \mbox{if $\theta=0$,} \\
	C_1a^4 & \mbox{if $0<|\theta|< 2a$,} \\
	C_1\theta^2 & \mbox{if $|\theta|\geq 2a$,}
	\end{cases}$$
	as long as $a \geq 1$.
\end{lemma}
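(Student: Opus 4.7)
The plan is to write $W := (Y^2-\nu_a)\mathbbm{1}_{\{|Y|\geq a\}}$ and treat the three regimes in turn.  In the centred case I will bound $\Var(W) = \mathbb{E}(W^2)$ directly (noting that $\mathbb{E}W = 0$ by definition of $\nu_a$); in the non-centred cases the key tool is the decomposition $W = (Y^2 - \nu_a) - (Y^2-\nu_a)\mathbbm{1}_{\{|Y|<a\}}$, which via the inequality $\Var(A-B) \leq 2\Var(A) + 2\Var(B)$ yields
\[
\Var(W) \leq 2\Var(Y^2) + 2\,\mathbb{E}\bigl\{(Y^2-\nu_a)^2 \mathbbm{1}_{\{|Y|<a\}}\bigr\}.
\]
Here $\Var(Y^2) = 2 + 4\theta^2$ is explicit, and on $\{|Y|<a\}$ we have $|Y^2 - \nu_a| \leq a^2 + \nu_a \leq 4a^2$ for $a \geq 1$ by Lemma~\ref{Lemma:muabounds}, so the truncated second moment is at most $16 a^4 \, \mathbb{P}(|Y|<a)$.

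When $\theta = 0$, the orthogonality $\mathbb{E}\{(Y^2-\nu_a)\mathbbm{1}_{\{|Y|\geq a\}}\} = 0$ together with cancellation of a $\nu_a^2 \mathbb{P}(|Y|\geq a)$ term gives $\mathbb{E}(W^2) \leq 2 \int_a^\infty y^4\phi(y)\,dy$. Two integrations by parts produce $\int_a^\infty y^4 \phi(y)\,dy = a^3 \phi(a) + 3a\phi(a) + 3\bar{\Phi}(a)$, which by a Mills' ratio bound is at most $7 a^3 \phi(a)$ for $a\geq 1$. Thus $\Var(W) \lesssim a^3 e^{-a^2/2}$. When $0<|\theta|<2a$, the trivial bound $\mathbb{P}(|Y|<a)\leq 1$ combined with $\Var(Y^2) \leq 2 + 16 a^2$ immediately yields $\Var(W) \lesssim a^4$.

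When $|\theta|\geq 2a$, the observation that on $\{|Y|<a\}$ one has $|Y - \theta| \geq |\theta| - a \geq a$ gives $\mathbb{P}(|Y|<a)\leq e^{-a^2/2}$ by a standard Gaussian tail bound, so the truncated second moment is at most $16 a^4 e^{-a^2/2}$, which is bounded by a universal constant on $a \geq 1$ (since $a \mapsto a^4 e^{-a^2/2}$ is maximised at $a=2$). Because $\theta^2 \geq 4$ in this regime, this constant is absorbed into a multiple of $\theta^2$; combined with $\Var(Y^2) \leq 5\theta^2$ this yields $\Var(W) \lesssim \theta^2$.

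The main subtlety lies precisely in this last case: the trivial bound $\Var(W) \leq \mathbb{E}(W^2)$ would only deliver $\mathbb{E}(Y^4) \asymp \theta^4$, missing the target order. The decomposition is essential, since subtracting the full-space centring $Y^2 - \nu_a$ captures the correct leading $\Var(Y^2) \asymp \theta^2$ contribution, while the Gaussian tail arising from $|\theta|\geq 2a$ renders the truncation correction negligible. Routine book-keeping of the explicit constants then confirms that a single universal $C_1 \geq 1$ can be chosen to cover all three regimes simultaneously.
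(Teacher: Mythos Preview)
Your proof is correct and complete, but it follows a genuinely different route from the paper's argument.

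The paper handles the non-centred cases via the law of total variance, conditioning on the indicator $\mathbbm{1}_{\{|Y|\geq a\}}$:
\[
\Var(W) = \mathbb{P}(|Y|\geq a)\,\Var\bigl(Y^2 \bigm| |Y|\geq a\bigr) + \mathbb{P}(|Y|<a)\,\mathbb{P}(|Y|\geq a)\bigl\{\mathbb{E}\bigl(Y^2-\nu_a \bigm| |Y|\geq a\bigr)\bigr\}^2.
\]
The first term is dominated by $\Var(Y^2)=2(1+2\theta^2)$, and the second requires an explicit bound on the conditional mean $\mathbb{E}(Y^2 \mid |Y|\geq a)$, obtained in the paper through a somewhat fiddly argument involving $\mathbb{E}(X^2 \mid X \geq b)$ for truncated standard normals.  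In the regime $|\theta|\geq 2a$ the paper then uses Chebychev's inequality to obtain $\mathbb{P}(|Y|<a)\lesssim \theta^{-2}$, so that the product of this probability with the square of the conditional mean (which is of order $\theta^4$) yields $\theta^2$.

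Your decomposition $W = (Y^2-\nu_a) - (Y^2-\nu_a)\mathbbm{1}_{\{|Y|<a\}}$ together with $\Var(A-B)\leq 2\Var(A)+2\Var(B)$ is arguably more elementary: it avoids any conditional-moment analysis, and the deterministic pointwise bound $|Y^2-\nu_a|\leq 4a^2$ on $\{|Y|<a\}$ makes the correction term transparent.  In the large-$|\theta|$ regime you use a Gaussian tail bound rather than Chebychev, obtaining $\mathbb{P}(|Y|<a)\leq e^{-a^2/2}$; this gives a $\theta$-independent constant for the correction, which you then absorb into $\theta^2$ via $\theta^2\geq 4a^2\geq 4$.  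Both devices work; the paper's Chebychev route would also fit naturally into your framework and would give the same $a^4/\theta^2\lesssim \theta^2$.  Your centred case ($\theta=0$) is essentially the same as the paper's, though you carry out the integration by parts explicitly rather than invoking the $\gamma_a$ notation from Lemma~\ref{Lemma:muabounds}.
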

\begin{proof}%[Proof of Lemma \ref{lem:trunc-chi-square-var}]
	Let $a \geq 1$.  When $\theta=0$, we have
	\begin{align*}
	\Var\bigl\{(Y^2-\nu_a)\mathbbm{1}_{\{|Y|\geq a\}}\bigr\} &\leq \mathbb{E}\bigl\{(Y^2-\nu_a)^2\mathbbm{1}_{\{|Y|\geq a\}}\bigr\} =\int_{|x|\geq a} \! (x^2-\nu_a)^2\phi(x) \, dx \\
	&\leq 2\bar{\Phi}(a)(\gamma_a - \nu_a^2) \leq 8a^3e^{-a^2/2},
	\end{align*}
	by~Lemma~\ref{Lemma:muabounds}. %When $\theta^2\neq 0$, we introduce the notation $L=\indc{Y^2\geq a^2}$. Then,
	For $\theta \neq 0$, we may write
	\begin{align}
	\Var&\bigl\{(Y^2-\nu_a)\mathbbm{1}_{\{|Y|\geq a\}}\bigr\} \nonumber \\
	&= \mathbb{E}\bigl[\Var\bigl\{(Y^2-\nu_a)\mathbbm{1}_{\{|Y|\geq a\}}\bigm|\mathbbm{1}_{\{|Y|\geq a\}}\bigr\}\bigr] \nonumber \\
	&\nonumber\hspace{3cm}+ \Var\bigl[\mathbb{E}\bigl\{(Y^2-\nu_a)\mathbbm{1}_{\{|Y|\geq a\}}\bigm|\mathbbm{1}_{\{|Y|\geq a\}}\bigr\}\bigr] \nonumber \\
	\label{Eq:Decomp}&= \mathbb{P}(|Y|\geq a)\Var\bigl(Y^2\bigm| |Y|\geq a\bigr) \\
	&\nonumber\hspace{3cm}+ \mathbb{P}(|Y|< a)\mathbb{P}(|Y|\geq a)\bigl\{\mathbb{E}\bigl(Y^2-\nu_a\bigm||Y|\geq a\bigr)\bigr\}^2.
	\end{align}
	Now
	%\[\Var(Y^2)\geq\mathbb{E}(\Var(Y^2 \bigm|\mathbbm{1}_{\{|Y|\geq a\}}))\geq \mathbb{P}(|Y|\geq a)\Var(Y^2\bigm||Y|\geq a),\]
	%we have
	\begin{align}
	\nonumber \mathbb{P}(|Y|\geq a)\Var(Y^2\bigm||Y|\geq a)&\leq \mathbb{E}\bigl\{\Var(Y^2 \bigm|\mathbbm{1}_{\{|Y|\geq a\}})\bigr\} \\
	\label{Eq:Term1} &\leq \Var(Y^2)=2(1+2\theta^2).
	\end{align}
	Moreover, writing $Y=\theta+X$ where $X \sim N(0,1)$, we have by Lemma~\ref{Lemma:muabounds} and the fact that $\mathbb{E}(X^2|X \geq b) = 1 + b\phi(b)/\bar{\Phi}(b) \leq 1$ for $b \leq 0$ that
	\begin{align}
	\label{Eq:Term2}
	%  \mathbb{E}\bigl(Y^2-\nu_a\bigm||Y|\geq a\bigr) 
	%  &\leq \mathbb{E}\bigl( 2(\theta^2+X^2) \bigm| |\theta+X| \geq a \bigr) \\
	%  &\leq 2\theta^2+2 \mathbb{E}\bigl(X^2 \bigm| X\geq a-\theta \bigr)\vee 2\mathbb{E}\bigl(X^2 \bigm| X\leq -a-\theta \bigr) \nonumber \\
	%  &\leq 2\theta^2+2\mathbb{E}\bigl( X^2\bigm| X^2\geq (a+|\theta|)^2\bigr)\leq 2\theta^2+6(a+|\theta|)^2.\nonumber
	%\end{align}
	%\begin{align} 
	% &\leq \mathbb{E}\bigl(Y^2\bigm||Y|\geq a\bigr) = \frac{\int_{-\infty}^{-a-\theta} x^2 \phi(x) \, dx + \int_{a-\theta}^\infty x^2 \phi(x) \, dx}{\int_{-\infty}^{-a-\theta} \phi(x) \, dx + \int_{a-\theta}^\infty \phi(x) \, dx} \nonumber \\
	%                                                 &\leq \frac{2\int_{a-|\theta|}^\infty x^2\phi(x) \, dx}{\int_{a-|\theta|}^\infty \phi(x) \, dx} \left\{ \begin{array}{ll} = 2 \nu_{a-|\theta|} & \mbox{if $a > |\theta|$} \\
	% \leq 4 & \mbox{if $a \leq |\theta|$.} \end{array} \right.
	\mathbb{E}\bigl(Y^2-\nu_a\bigm||Y|\geq a\bigr) &\leq 2\mathbb{E}\bigl( \theta^2+X^2 \bigm| |\theta+X| \geq a \bigr) \nonumber \\
	&\leq 2\theta^2+2 \mathbb{E}\bigl(X^2 \bigm| X\geq a-\theta \bigr)\vee 2\mathbb{E}\bigl(X^2 \bigm| X\leq -a-\theta \bigr) \nonumber \\
	&\leq 2\theta^2+2\mathbb{E}\bigl( X^2\bigm| X\geq a+|\theta|\bigr)\leq 2\theta^2+6(a+|\theta|)^2.
	%   &\leq 2\theta^2+2 \frac{\int_{-\infty}^{-a-\theta} x^2 \phi(x) \, dx + \int_{a-\theta}^\infty x^2 \phi(x) \, dx}{\int_{-\infty}^{-a-\theta} \phi(x) \, dx + \int_{a-\theta}^\infty \phi(x) \, dx} \nonumber \\
	%  &\leq 2\theta^2+\frac{4\int_{a-|\theta|}^\infty x^2\phi(x) \, dx}{\int_{a-|\theta|}^\infty \phi(x) \, dx} \left\{ \begin{array}{ll} = 2\theta^2+ 4 \nu_{a-|\theta|} & \mbox{if $a > |\theta|$} \\
	%\leq 2\theta^2+ 8 & \mbox{if $a \leq |\theta|$.} \end{array} \right.
	\end{align}
	Moreover,
	\[
	\mathbb{E}\bigl(\nu_a - Y^2\bigm||Y|\geq a\bigr) \leq \nu_a \leq 3a^2.
	\]
	For the first case when $0<|\theta| < 2a$, the result follows from~\eqref{Eq:Decomp},~\eqref{Eq:Term1},~\eqref{Eq:Term2} and Lemma~\ref{Lemma:muabounds}.  For the second case when $|\theta|\geq 2a$, notice that we also have by Chebychev's inequality that $\mathbb{P}(|Y|< a)\leq \frac{2(1+2\theta^2)}{(\theta^2+1-a^2)^2}\leq 18/\theta^{2}$ and the result follows from combining this with~\eqref{Eq:Decomp},~\eqref{Eq:Term1} and~\eqref{Eq:Term2}.
\end{proof}

The following lemma is a direct consequence of results in \cite{tsybakov2009introduction}. We include the proof for completeness.

\begin{lemma}\label{lem:lower}
	Let $\Theta_0$ and $\Theta_1$ denote general parameter spaces, and consider a family of distribution $\{\mathbb{P}_{\theta}\}_{\theta\in\Theta}$, where $\Theta:=\Theta_0\cup\Theta_1$. Let $\nu_0$ and $\nu_1$ be two distributions supported on $\Theta_0$ and $\Theta_1$ respectively.  For $r \in \{0,1\}$, define $\mathbb{Q}_r$ to be the marginal distribution of the random variable $X$ generated hierarchically according to $\theta \sim \nu_r$ and $X|\theta \sim \mathbb{P}_\theta$.  
	%$\int_{\Theta_r}\mathbb{P}_{\theta} \, d\nu_r(\theta)$ to be the 
	%\begin{align*}
	%X\sim \mathbb{P}_{\theta}; \theta\sim \nu_0.
	%\end{align*}
	%                      Also define $\int_{\Theta_1}\mathbb{P}_{\theta}\, d\nu_1(\theta)$ in the same way.
	Then
	$$\inf_{\psi \in \Psi}\biggl\{\sup_{\theta\in\Theta_0}\mathbb{E}_{\theta}\psi(X) + \sup_{\theta\in\Theta_1}\mathbb{E}_{\theta}\bigl(1-\psi(X)\bigr)\biggr\}\geq \max\left\{\frac{1}{2}\exp\left(-\alpha\right),1-\sqrt{\frac{\alpha}{2}}\right\},$$
	where $\alpha:=\chi^2(\mathbb{Q}_0 \| \mathbb{Q}_1)$.
\end{lemma}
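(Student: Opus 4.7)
The plan is to reduce the minimax error to a quantity involving $\TV(\mathbb{Q}_0,\mathbb{Q}_1)$, and then bound this total variation distance by $\chi^2(\mathbb{Q}_0\|\mathbb{Q}_1)$ via two classical $f$-divergence inequalities.

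First, I would pass from the worst-case sum of errors to an average (Bayesian) sum of errors using the priors $\nu_0$ and $\nu_1$: since the supremum over a parameter space dominates any integral against a probability measure on that space,
\[
\inf_{\psi\in\Psi}\biggl\{\sup_{\theta\in\Theta_0}\mathbb{E}_\theta\psi + \sup_{\theta\in\Theta_1}\mathbb{E}_\theta(1-\psi)\biggr\} \;\geq\; \inf_{\psi\in\Psi}\bigl\{\mathbb{E}_{\mathbb{Q}_0}\psi + \mathbb{E}_{\mathbb{Q}_1}(1-\psi)\bigr\} \;=\; 1-\TV(\mathbb{Q}_0,\mathbb{Q}_1),
\]
where the last equality is the Neyman--Pearson characterisation of total variation. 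So it suffices to prove the two bounds $1-\TV(\mathbb{Q}_0,\mathbb{Q}_1)\geq \frac{1}{2}e^{-\alpha}$ and $1-\TV(\mathbb{Q}_0,\mathbb{Q}_1)\geq 1-\sqrt{\alpha/2}$.

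Second, I would invoke two well-known divergence inequalities, with the common link $D(\mathbb{Q}_0\|\mathbb{Q}_1)\leq \log\bigl(1+\chi^2(\mathbb{Q}_0\|\mathbb{Q}_1)\bigr)=\log(1+\alpha)$, together with $\log(1+\alpha)\leq \alpha$.  For the second bound, Pinsker's inequality gives $\TV(\mathbb{Q}_0,\mathbb{Q}_1)\leq \sqrt{D(\mathbb{Q}_0\|\mathbb{Q}_1)/2}\leq \sqrt{\log(1+\alpha)/2}\leq \sqrt{\alpha/2}$, whence $1-\TV(\mathbb{Q}_0,\mathbb{Q}_1)\geq 1-\sqrt{\alpha/2}$.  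For the first bound, the Bretagnolle--Huber inequality gives $\TV(\mathbb{Q}_0,\mathbb{Q}_1)^2\leq 1-e^{-D(\mathbb{Q}_0\|\mathbb{Q}_1)}$, hence
\[
(1-\TV(\mathbb{Q}_0,\mathbb{Q}_1))(1+\TV(\mathbb{Q}_0,\mathbb{Q}_1)) \;\geq\; e^{-D(\mathbb{Q}_0\|\mathbb{Q}_1)} \;\geq\; \frac{1}{1+\alpha} \;\geq\; e^{-\alpha},
\]
where the penultimate step uses the chi-squared bound on KL, and the final step uses the elementary inequality $e^{-\alpha}(1+\alpha)\leq 1$ (which follows since the function vanishes at $0$ and has non-positive derivative on $[0,\infty)$).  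Dividing by $1+\TV(\mathbb{Q}_0,\mathbb{Q}_1)\leq 2$ yields $1-\TV(\mathbb{Q}_0,\mathbb{Q}_1)\geq \frac{1}{2}e^{-\alpha}$.

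There is no real obstacle here: both ingredients (Neyman--Pearson, Pinsker, Bretagnolle--Huber, and the KL-vs-$\chi^2$ comparison) are standard textbook facts, and the result is indeed recorded essentially in the form stated as Theorem 2.2 and the surrounding discussion in \cite{tsybakov2009introduction}.  The only point to handle with mild care is to make sure the chi-squared appears in the direction $\chi^2(\mathbb{Q}_0\|\mathbb{Q}_1)$ as stated; this is not a problem because both Pinsker and Bretagnolle--Huber apply to $D(\mathbb{Q}_0\|\mathbb{Q}_1)$, which is majorised by $\log\bigl(1+\chi^2(\mathbb{Q}_0\|\mathbb{Q}_1)\bigr)$ in the same direction.
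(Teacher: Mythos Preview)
Your proposal is correct and follows essentially the same route as the paper: reduce the minimax sum of errors to $1-\TV(\mathbb{Q}_0,\mathbb{Q}_1)$ by averaging over the priors, then bound $\TV$ in terms of $\chi^2$. The paper simply cites equations~(2.25), (2.26) and Lemma~2.5 of \cite{tsybakov2009introduction} for the last step, whereas you spell out explicit routes via Pinsker and Bretagnolle--Huber combined with $D\leq\log(1+\chi^2)$; both are standard and lead to the same conclusion.
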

\begin{proof}%[Proof of Lemma~\ref{lem:lower}]
	% Let $\mathbb{Q}_0:=\int_{\Theta_0} \mathbb{P}_{\theta} \, d\nu_0(\theta)$ and $\mathbb{Q}_1:=\int_{\Theta_1} \mathbb{P}_{\theta} \, d\nu_1(\theta)$.  
	In a slight abuse of notation, we use $\mathbb{E}_0$ and $\mathbb{E}_1$ to denote expectations with respect to $\mathbb{Q}_0$ and $\mathbb{Q}_1$ respectively. %Let $\beta$ be the total variation distance between $\mathbb{Q}_0$ and $\mathbb{Q}_1$. Then by definition of total variation distance we have
	Then 
	\begin{align*}
	\inf_{\psi \in \Psi}\biggl\{\sup_{\theta\in\Theta_0}\mathbb{E}_{\theta}\psi(X) + \sup_{\theta\in\Theta_1}\mathbb{E}_{\theta}\bigl(1\!-\!\psi(X)\bigr)\biggr\} &\geq \inf_{\psi \in \Psi} \biggl\{\mathbb{E}_{0}\psi(X) + \mathbb{E}_{1}\bigl(1\!-\!\psi(X)\bigr)\biggr\}\\
	&= 1-\TV(\mathbb{Q}_0,\mathbb{Q}_1).
	\end{align*}
	The result then follows from elementary bounds on the total variation distance given in equations $(2.25)$, $(2.26)$ and Lemma~2.5 of \cite{tsybakov2009introduction}.
      \end{proof}
      The following is referred to in the main text as the truncated second moment method.
        \begin{lemma}
          \label{Lemma:TruncatedSM}
          For $n \in \mathbb{N}$, let $f_{0n}$ and $f_{1n}$ denote densities on a measure space $(\mathcal{X}_n,\mathcal{A}_n,\mu_n)$.  Suppose that there exists $A_n \in \mathcal{A}_n$ such that
          \begin{enumerate}
          \item $\mathbb{E}_{X \sim f_{0n}}\bigl\{\frac{f_{1n}(X)}{f_{0n}(X)}\mathbbm{1}_{\{X \in A_n\}}\bigr\} \rightarrow 1$;
          \item $\mathbb{E}_{X \sim f_{0n}}\bigl\{\bigl(\frac{f_{1n}(X)}{f_{0n}(X)}\bigr)^2\mathbbm{1}_{\{X \in A_n\}}\bigr\} \rightarrow 1$.
            \end{enumerate}
            Then $f_{1n}(X)/f_{0n}(X)$ converges to $1$ in $f_{0n}$-probability.
          \end{lemma}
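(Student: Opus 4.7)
The plan is to show that the truncated likelihood ratio $L_n\mathbbm{1}_{\{X\in A_n\}}$, where $L_n := f_{1n}(X)/f_{0n}(X)$, converges to $1$ in $L^2(f_{0n})$, and then to extract convergence in $f_{0n}$-probability of the untruncated ratio by separately bounding the probability of the truncation event itself.

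First, under $X\sim f_{0n}$ the ratio $L_n$ is well-defined $f_{0n}$-almost surely and non-negative. Expanding the square and using hypotheses 1 and 2, I would compute
\[
\mathbb{E}_{X\sim f_{0n}}\bigl\{\bigl(L_n\mathbbm{1}_{\{X\in A_n\}}-1\bigr)^2\bigr\} = \mathbb{E}_{X\sim f_{0n}}\bigl\{L_n^2\mathbbm{1}_{\{X\in A_n\}}\bigr\} - 2\,\mathbb{E}_{X\sim f_{0n}}\bigl\{L_n\mathbbm{1}_{\{X\in A_n\}}\bigr\} + 1 \to 0,
\]
so that $L_n\mathbbm{1}_{\{X\in A_n\}}\to 1$ in $L^2(f_{0n})$ and hence in $f_{0n}$-probability by Markov's inequality.

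The key observation for the next step is that on $\{X\notin A_n\}$ the truncated ratio $L_n\mathbbm{1}_{\{X\in A_n\}}$ is identically $0$, and therefore $|L_n\mathbbm{1}_{\{X\in A_n\}}-1|=1$ on this event. Consequently $\{X\notin A_n\}\subseteq\{|L_n\mathbbm{1}_{\{X\in A_n\}}-1|>1/2\}$, and the previous step implies $\mathbb{P}_{X\sim f_{0n}}(X\notin A_n)\to 0$. Finally, for any $\epsilon\in(0,1)$,
\[
\mathbb{P}_{X\sim f_{0n}}\bigl(|L_n-1|>\epsilon\bigr) \leq \mathbb{P}_{X\sim f_{0n}}(X\notin A_n) + \mathbb{P}_{X\sim f_{0n}}\bigl(|L_n\mathbbm{1}_{\{X\in A_n\}}-1|>\epsilon\bigr),
\]
and both terms on the right vanish, completing the argument.

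There is no real obstacle here: the only slightly subtle point is the observation that truncated $L^2$-concentration around $1$ forces the truncation event itself to be $f_{0n}$-negligible, which is precisely what lets one trade the truncated conclusion for the untruncated one.
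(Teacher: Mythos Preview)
Your proof is correct and follows essentially the same route as the paper: both arguments show that $Y_n := L_n\mathbbm{1}_{\{X\in A_n\}}$ converges to $1$ in $L^2(f_{0n})$ and then exploit the fact that $|Y_n-1|=1$ on $A_n^c$ to pass to the untruncated ratio. The paper's version is marginally more compact, using the single inclusion $\{|L_n-1|>\epsilon\}\subseteq\{|Y_n-1|>\epsilon\}$ (valid for $\epsilon\in(0,1)$) in place of your two-step decomposition, but the content is identical.
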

          \begin{proof}
            Let $Y_n := \frac{f_{1n}(X)}{f_{0n}(X)}\mathbbm{1}_{\{X \in A_n\}}$.  Then $\mu_n := \mathbb{E}_{X \sim f_{0n}}(Y_n) \rightarrow 1$ and %We first observe that
%            \[
 %             \mathbb{E}_{f_{0n}}\biggl(\frac{f_{1n}(X)}{f_{0n}(X)}\mathbbm{1}_{\{X \in A_n^c\}}\biggr)
  %            = 1 - \mathbb{E}_{f_{0n}}\biggl(\frac{f_{1n}(X)}{f_{0n}(X)}\mathbbm{1}_{\{X \in A_n\}}\biggr) \rightarrow 0
   %         \]
   %         as $n \rightarrow \infty$.  Next,
  %          \begin{align*}
  %            \mathrm{Var}_{f_{0n}}&(\frac{f_{1n}(X)}{f_{0n}(X)}\mathbbm{1}_{\{X \in A_n\}}\biggr) \\
  %            &= \mathbb{E}_{f_{0n}}\biggl\{\biggl(\frac{f_{1n}(X)}{f_{0n}(X)}\biggr)^2\mathbbm{1}_{\{X \in A_n\}}\biggr\} - \mathbb{E}_{f_{0n}}\biggl(\frac{f_{1n}(X)}{f_{0n}(X)}\mathbbm{1}_{\{X \in A_n\}}\biggr) \rightarrow 0
  %          \end{align*}
            \[
              \mathrm{Var}_{X \sim f_{0n}}(Y_n) = \mathbb{E}_{X \sim f_{0n}}(Y_n^2) - \mu_n^2 \rightarrow 0
            \]
            as $n \rightarrow \infty$.  Now let $Z_n := f_{1n}(X)/f_{0n}(X)$.  Then by Chebychev's inequality that, given $\epsilon \in (0,1)$,
 %           \begin{align*}
 %             \mathbb{P}_{f_{0n}}&\biggl(\frac{f_{1n}(X)}{f_{0n}(X)} > 1 + \epsilon\biggr) \\
  %                               &= \mathbb{P}_{f_{0n}}\biggl(\biggl\{\frac{f_{1n}(X)}{f_{0n}(X)} > 1 + \epsilon\biggr\} \bigcap \{X \in A_n\}\biggr) \\
 %             &\hspace{4cm}+ \mathbb{P}_{f_{0n}}\biggl(\biggl\{\frac{f_{1n}(X)}{f_{0n}(X)} > 1 + \epsilon\biggr\} \bigcap \{X \in A_n^c\}\biggr) \\
 %                                                                                         &= \mathbb{P}_{f_{0n}}\biggl(\frac{f_{1n}(X)}{f_{0n}(X)}\mathbbm{1}_{\{X \in A_n\}} > 1 + \epsilon\biggr) + \mathbb{P}_{f_{0n}}\biggl(\frac{f_{1n}(X)}{f_{0n}(X)}\mathbbm{1}_{\{X \in A_n^c\}} > 1 + \epsilon\biggr) \\
 %                                &\leq \mathbb{P}_{f_{0n}}\biggl\{\frac{f_{1n}(X)}{f_{0n}(X)}\mathbbm{1}_{\{X \in A_n\}} > \mathbb{E}_{f_{0n}}\biggl(\frac{f_{1n}(X)}{f_{0n}(X)}\mathbbm{1}_{\{X \in A_n\}}\biggr) + \epsilon\biggr\} \\
 %             &\hspace{4cm}+ \mathbb{P}_{f_{0n}}\biggl(\frac{f_{1n}(X)}{f_{0n}(X)}\mathbbm{1}_{\{X \in A_n^c\}} > 1 + \epsilon\biggr) \\
 %             &\leq \frac{1}{\epsilon^2}\mathrm{Var}_{f_{0n}}\biggl(\frac{f_{1n}(X)}{f_{0n}(X)}\mathbbm{1}_{\{X \in A_n\}}\biggr) + \frac{1}{1+\epsilon}\mathbb{E}_{f_{0n}}\biggl(\frac{f_{1n}(X)}{f_{0n}(X)}\mathbbm{1}_{\{X \in A_n^c\}}\biggr) \rightarrow 0.
%            \end{align*}
            \begin{align*}
              \mathbb{P}_{X \sim f_{0n}}(|Z_n - 1| > \epsilon) \leq \mathbb{P}_{X \sim f_{0n}}(|Y_n - 1| > \epsilon) &\leq \mathbb{P}_{X \sim f_{0n}}(|Y_n - \mu_n| > |1 - \mu_n| + \epsilon) \\
              &\leq \frac{\mathrm{Var}_{X \sim f_{0n}}(Y_n)}{(|1 - \mu_n| + \epsilon)^2} \rightarrow 0,
            \end{align*}
as required.            
\end{proof}

For Gaussian location mixtures, the chi-squared divergence takes a closed form, which is referred to as the Ingster--Suslina method \citep{ingster2012nonparametric}. %We again include the proof here for completeness.
\begin{lemma}\label{lem:chi-square-cal}
	Let $\phi_{\Sigma}$ denote the density function of the $N_p(0,\Sigma)$ distribution for some positive definite $\Sigma \in \mathbb{R}^{p \times p}$. Define $f_0:=\phi_\Sigma$ and $f_1(\cdot):=\int_{\mathbb{R}^p} \phi_\Sigma(\cdot-\mu) \, d\nu(\mu)$ for some distribution $\nu$ on $\mathbb{R}^p$. Then, for any measurable $A\subseteq\mathbb{R}^p$, we have
	\begin{align*}
	\chi^2(f_1\|f_0) &= \mathbb{E}_{(\mu_1,\mu_2)\sim\nu\otimes\nu}\exp\left(\mu_1^T\Sigma^{-1}\mu_2\right)-1, \\
	\int_A\frac{f_1^2}{f_0} &= \mathbb{E}_{(\mu_1,\mu_2)\sim\nu\otimes\nu}\bigl\{\exp\left(\mu_1^T\Sigma^{-1}\mu_2\right)\mathbb{P}_{x\sim N_p(\mu_1+\mu_2,\Sigma)}(x\in A)\bigr\}.
	\end{align*}
\end{lemma}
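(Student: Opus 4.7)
The plan is to prove the two identities together, starting with the second (which is the more informative statement) and deducing the first as the special case $A = \mathbb{R}^p$. The core is a Gaussian completion-of-the-square identity that collapses the product $\phi_\Sigma(x-\mu_1)\phi_\Sigma(x-\mu_2)/\phi_\Sigma(x)$ into another Gaussian density, up to an explicit exponential factor.

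First I would write
\[
\int_A \frac{f_1^2(x)}{f_0(x)}\,dx = \int_A \frac{1}{\phi_\Sigma(x)} \biggl(\int \phi_\Sigma(x-\mu_1)\,d\nu(\mu_1)\biggr)\biggl(\int \phi_\Sigma(x-\mu_2)\,d\nu(\mu_2)\biggr)\,dx,
\]
and apply Fubini's theorem (the integrand is non-negative, so this is unconditional) to swap the order of integration. It then suffices to compute, for fixed $\mu_1,\mu_2 \in \mathbb{R}^p$, the integral $\int_A \phi_\Sigma(x-\mu_1)\phi_\Sigma(x-\mu_2)/\phi_\Sigma(x)\,dx$.

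The key algebraic step is a direct quadratic-form expansion: using $\phi_\Sigma(y) = (2\pi)^{-p/2}|\Sigma|^{-1/2}\exp(-y^T\Sigma^{-1}y/2)$, one verifies by expanding and collecting terms that
\[
(x-\mu_1)^T\Sigma^{-1}(x-\mu_1) + (x-\mu_2)^T\Sigma^{-1}(x-\mu_2) - x^T\Sigma^{-1}x = (x-\mu_1-\mu_2)^T\Sigma^{-1}(x-\mu_1-\mu_2) - 2\mu_1^T\Sigma^{-1}\mu_2.
\]
Substituting this back yields the clean pointwise identity
\[
\frac{\phi_\Sigma(x-\mu_1)\phi_\Sigma(x-\mu_2)}{\phi_\Sigma(x)} = \phi_\Sigma(x-\mu_1-\mu_2)\exp(\mu_1^T\Sigma^{-1}\mu_2).
\]
Integrating over $x \in A$ gives $\exp(\mu_1^T\Sigma^{-1}\mu_2)\,\mathbb{P}_{X\sim N_p(\mu_1+\mu_2,\Sigma)}(X \in A)$, and reinserting this into the double integral against $\nu\otimes\nu$ delivers the second displayed identity.

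For the first identity, I would take $A=\mathbb{R}^p$ so that $\mathbb{P}_{X\sim N_p(\mu_1+\mu_2,\Sigma)}(X\in A) = 1$, giving $\int_{\mathbb{R}^p} f_1^2/f_0 = \mathbb{E}_{(\mu_1,\mu_2)\sim\nu\otimes\nu}\exp(\mu_1^T\Sigma^{-1}\mu_2)$, and then use the standard identity $\chi^2(f_1\|f_0) = \int f_1^2/f_0 - 1$ (which follows from expanding $(f_1/f_0 - 1)^2 f_0$). The proof is essentially just this single completion-of-the-square calculation plus Fubini; there is no real obstacle, though one should note that non-negativity of the integrand is what licenses the interchange of integration order even in the absence of integrability assumptions on $\nu$.
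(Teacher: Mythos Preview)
Your proposal is correct and follows essentially the same approach as the paper: both apply Fubini to the double mixture integral and then carry out the Gaussian completion-of-the-square that turns $\phi_\Sigma(x-\mu_1)\phi_\Sigma(x-\mu_2)/\phi_\Sigma(x)$ into $e^{\mu_1^T\Sigma^{-1}\mu_2}\phi_\Sigma(x-\mu_1-\mu_2)$, then specialize $A=\mathbb{R}^p$ for the $\chi^2$ identity. The only difference is cosmetic---the paper writes the likelihood ratio in exponential form and integrates against $\phi_\Sigma$, whereas you state the pointwise density identity directly---so the two proofs are the same in substance.
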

\begin{proof}%[Proof of Lemma~\ref{lem:chi-square-cal}]
	By Fubini's theorem, we have
	\begin{align*}
	\int_{A} \frac{f_1^2}{f_0} &= \mathbb{E}_{x\sim \phi_{\Sigma}}\biggl(\frac{f_1^2(x)}{f_0^2(x)}\mathbbm{1}_{\{x\in A\}}\biggr)  \\
	&= \mathbb{E}_{x\sim\phi_\Sigma} \biggl(\frac{\bigl\{\mathbb{E}_{\mu\sim \nu}\phi_{\Sigma}(x-\mu)\bigr\}^2}{\phi^2_{\Sigma}(x)}\mathbbm{1}_{\{x\in A\}}\biggr) \\
	&=                                                  \mathbb{E}_{x\sim\phi_\Sigma}\biggl\{\mathbb{E}_{(\mu_1,\mu_2)\sim\nu\otimes\nu}\Bigl(e^{-(\norm{\mu_1}^2_{\Sigma^{-1}}+\norm{\mu_2}^2_{\Sigma^{-1}})/2 + \langle x,\mu_1+\mu_2 \rangle_{\Sigma^{-1}}}\mathbbm{1}_{\{x\in A\}}\Bigr)\biggr\} \\
	&= \mathbb{E}_{(\mu_1,\mu_2)\sim\nu\otimes\nu}\biggl\{\mathbb{E}_{x\sim\phi_\Sigma}\Bigl(e^{-(\norm{\mu_1}^2_{\Sigma^{-1}}+\norm{\mu_2}^2_{\Sigma^{-1}})/2+\langle x,\mu_1+\mu_2 \rangle_{\Sigma^{-1}}}\mathbbm{1}_{\{x\in A\}}\Bigr)\biggr\}\\
	&= \mathbb{E}_{(\mu_1,\mu_2)\sim\nu\otimes\nu}\bigl\{\exp\left(\mu_1^T\Sigma^{-1}\mu_2\right)\mathbb{P}_{x\sim N_p(\mu_1+\mu_2,\Sigma)}(x\in A)\bigr\},
	\end{align*}
as requried.  Taking $A=\mathbb{R}^p$, we also have
	$$\chi^2(f_1\|f_0)=\int_{\mathbb{R}^p} \frac{f_1^2}{f_0}-1=\mathbb{E}_{(\mu_1,\mu_2)\sim\nu\otimes\nu}\exp\left(\mu_1^T\Sigma^{-1}\mu_2\right)-1.$$
	The proof is complete.
\end{proof}
The following lemma follows immediately from the proof of \citet[][Theorem~2.1]{chen2018robust}.
\begin{lemma}\label{lem:median}
	Consider independent observations $X_j\sim N(\theta + \delta_j,\sigma^2)$ for $j\in[p]$ and the estimator $\wh{\theta}:=\text{\sf Median}(X_1,\ldots,X_p)$. Assume that $\sum_{j=1}^p \mathbbm{1}_{\{\delta_j\neq 0\}}\leq s \leq p/4$. Then there exist universal constants $C_1,C_2,C_3>0$, such that
	$$|\wh{\theta}-\theta|\leq C_1\sigma\biggl(\frac{s}{p} + \sqrt{\frac{1+x}{p}}\biggr),$$
	with probability at least $1-e^{-C_2x}$ for any $x>0$ such that $C_3\left(\frac{s}{p} + \sqrt{\frac{1+x}{p}}\right)\leq 1$.
\end{lemma}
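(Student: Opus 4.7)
\textbf{Proof plan for Lemma~\ref{lem:median}.}  By the scale/location invariance of the median, I will first reduce to the case $\sigma=1$ and $\theta=0$, and then prove the one-sided bound $\mathbb{P}(\hat\theta > t)\le e^{-C_2 x}/2$ with $t := C_1\bigl(s/p+\sqrt{(1+x)/p}\bigr)$; a symmetric argument for $\mathbb{P}(\hat\theta<-t)$ yields the conclusion after a union bound.

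The core observation is a counting identity: $\hat\theta>t$ forces $\#\{j\in[p]:X_j>t\}\ge \lceil p/2\rceil$. Partitioning $[p]=G\sqcup B$ with $G:=\{j:\delta_j=0\}$ and $|B|\le s$, the bad coordinates contribute at most $s$ to this count, so I need to control the number of \emph{good} coordinates above $t$, namely the binomial variable $N := \#\{j\in G:X_j>t\}\sim\mathrm{Bin}(|G|,q)$ with $q := 1-\Phi(t)$. The problem reduces to
\[
\mathbb{P}(\hat\theta>t) \;\le\; \mathbb{P}\bigl(N\ge \lceil p/2\rceil - s\bigr).
\]

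Next I would extract a gap between the mean and the threshold. The hypothesis $C_3(s/p+\sqrt{(1+x)/p})\le 1$ ensures that $t\le C_1/C_3$ is bounded by a universal constant, so by the mean value theorem applied to $\Phi$ on $[0,C_1/C_3]$, there exists a universal $c_0>0$ with $\Phi(t)-1/2 \ge c_0 t$. Consequently
\[
\mathbb{E} N \;=\; |G|\bigl(1/2-(\Phi(t)-1/2)\bigr) \;\le\; \tfrac{p}{2} - c_0\,|G|\,t \;\le\; \tfrac{p}{2} - c_0 p t + c_0 s t.
\]
Inserting our choice of $t$, the gap $\lceil p/2\rceil - s - \mathbb{E} N$ is at least $c_0 p t - s - c_0 st - 1 \ge c_0 C_1(s+\sqrt{p(1+x)}) - 2s$, which for $C_1\ge C_1(c_0)$ chosen large is at least $\tfrac{c_0 C_1}{2}\sqrt{p(1+x)}$.

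Finally I apply Hoeffding's inequality to the centred binomial: for any $u>0$, $\mathbb{P}(N-\mathbb{E} N\ge u)\le\exp(-2u^2/|G|)\le\exp(-2u^2/p)$. Plugging in $u = \tfrac{c_0 C_1}{2}\sqrt{p(1+x)}$ yields $\mathbb{P}(N\ge \lceil p/2\rceil - s) \le \exp(-C_2(1+x))\le e^{-C_2 x}/2$ for $C_2 := c_0^2 C_1^2/2$ (after enlarging $C_1$ once more if needed to absorb constants). Doubling for the lower-tail event completes the proof, with $C_3$ chosen so that the linear expansion of $\Phi$ is valid.

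The argument is essentially routine; the only genuine point of care is coordinating the three constants so that (i) $C_3$ is large enough that $t$ stays in a regime where $\Phi(t)-1/2\gtrsim t$, (ii) $C_1$ is large enough that the Hoeffding gap dominates both the sparsity defect $s$ and the fluctuation term $\sqrt{p(1+x)}$, and (iii) the resulting exponent $C_2$ is a strictly positive universal constant. No deeper tool is required: the whole lemma is a standard robust-median calculation that exploits the sharp threshold behaviour of a symmetric unimodal noise distribution.
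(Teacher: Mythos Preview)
Your argument is correct: the reduction to $\theta=0,\sigma=1$, the counting lower bound $\#\{j:X_j>t\}\ge\lceil p/2\rceil$ on the event $\{\hat\theta>t\}$, the separation into good and bad coordinates, the linearisation $\Phi(t)-\tfrac12\ge c_0t$ valid because the constraint $C_3(s/p+\sqrt{(1+x)/p})\le1$ keeps $t$ bounded, and the final Hoeffding step all go through as you describe, with the constants coordinated in the way you outline.

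As for comparison with the paper: the paper does not actually prove this lemma but simply asserts that it ``follows immediately from the proof of \citet[][Theorem~2.1]{chen2018robust}''.  Your write-up therefore supplies a self-contained elementary proof where the paper defers to an external reference; the underlying mechanism (robustness of the median via a binomial count plus a concentration inequality) is presumably the same as in that reference, so there is no methodological divergence to report.
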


\section*{Acknowledgements}

The research of Chao Gao was supported in part by NSF grant DMS-1712957 and NSF CAREER Award DMS-1847590.  The research of Richard J. Samworth was supported by an EPSRC fellowship and an EPSRC Programme Grant.

\begin{small}
  \bibliographystyle{dcu}
\bibliography{reference}
\end{small}

%\end{raggedright}           % Comment this out if you don't want ragged edges.

\end{document}